\theoremstyle{plain}
\newtheorem{lemma}{Lemma}[section]
\newtheorem{thm}[lemma]{Theorem}
\newtheorem{prop}[lemma]{Proposition}
\newtheorem{assumption}{Assumption}
\begin{document}
\title[Superiority of stochastic symplectic methods via LIL]{
	Superiority of stochastic symplectic methods via the law of iterated logarithm
}
\author{Chuchu Chen, Xinyu Chen, Tonghe Dang, Jialin Hong}
\address{LSEC, ICMSEC,  Academy of Mathematics and Systems Science, Chinese Academy of Sciences, Beijing 100190, China,
	\and 
	School of Mathematical Sciences, University of Chinese Academy of Sciences, Beijing 100049, China}
\email{chenchuchu@lsec.cc.ac.cn; chenxinyu@amss.ac.cn; dangth@lsec.cc.ac.cn; hjl@lsec.cc.ac.cn}
\thanks{This work is funded by the National key R\&D Program of China under Grant (No. 2020YFA0713701), National Natural Science Foundation of China (No. 12031020), and by Youth Innovation Promotion Association CAS, China.}
\begin{abstract}
%	Numerous numerical experiments have demonstrated the superiority of stochastic symplectic methods over non-symplectic counterparts in long-term stability, highlighting the good ability in capturing the fluctuation of the underlying solution process. How can we rigorously explain this superiority?  
%	This paper investigates this problem from the perspective of the law of iterated logarithm.  First, by fully utilizing the time-change theorem for martingales and the Borell--TIS inequality, we prove that the limit superior of the exact solution for the linear stochastic Hamiltonian system in Hilbert space with a specific scaling function almost surely equals to some non-zero constant, thus confirming the validity of the law of iterated logarithm.
%	{\color{blue}(
The superiority of stochastic symplectic methods over non-symplectic counterparts  has been verified by plenty of numerical experiments, especially in capturing  the asymptotic behaviour of the underlying solution process. 
How can one theoretically  explain this superiority?  
This paper gives an answer to this problem from the perspective of the law of iterated logarithm, taking the linear stochastic Hamiltonian system in Hilbert space as a test model. The main contribution is twofold. First, by fully utilizing the time-change theorem for martingales and the Borell--TIS inequality, we prove that the upper limit of the exact solution with a specific scaling function almost surely equals  some non-zero constant, thus confirming the validity of the law of iterated logarithm.
Second, we prove that stochastic symplectic fully discrete methods asymptotically preserve the law of iterated logarithm, but non-symplectic ones do not. 
This reveals the good ability of stochastic symplectic methods in characterizing the almost sure asymptotic growth of the utmost fluctuation  of  the underlying solution process.
Applications of our results to the linear stochastic oscillator and the linear stochastic Schr\"{o}dinger equation are also presented.
\end{abstract}
\keywords {Stochastic Hamiltonian system  $\cdot$ Stochastic symplectic method  $\cdot$ Superiority $\cdot$ The law of iterated logarithm}
\maketitle
\section{Introduction}
The stochastic Hamiltonian system  (SHS) serves as a fundamental model in various physical and engineering sciences. 
One of the most relevant feactures of the SHS is that its phase flow preserves the stochastic symplectic structure pathwisely. 
Concerning the numerical approximation of the SHS, one may anticipate that numerical methods preserve the  symplecticity, leading to the pioneering works (see \cite{Milstein2002, Milstein20022}) of G. N. Milstein and co-authors on stochastic symplectic methods.
%Concerning the numerical approximation of the SHS, the preservation of such structure is of great significance, and there has been a great amount of works on the construction of stochastic symplectic methods
The construction and analysis of stochastic symplectic methods have further been  developed in recent decades; see monographs e.g. \cite{Hongsun2022,Hongwang2019,Milstein2021} and references therein.   There have been plenty of numerical experiments that 
demonstrate the superiority of stochastic symplectic methods over non-symplectic counterparts in the  long-term computation, especially in capturing  the asymptotic behaviour of the underlying solution process.  

%How can we rigorously explain this superiority theoretically?  
%The study on the rigorous explanation of this superiority is a recent subject.  
%This problem has been studied with a growing number of papers. 
The study on  the rigorous explanation of the superiority is a recent subject, and different perspectives are provided.  
One of the perspectives is based on the stochastic modified equation and the backward error analysis, providing insights into  the convergence properties of the truncated stochastic modified equation and the estimate of the Hamiltonian deviation for stochastic symplectic methods. 
For instance, for 
%a special class of SHSs, i.e., 
the stochastic Langevin equation, which is a special SHS, the backward error analysis of stochastic symplectic methods is performed by  the corresponding stochastic modified equation at the level of the stochastic differential equation (see \cite{Shardlow2006}) and at the level of the associated Kolmogorov equation (see \cite{Zygalakis2011, Kpoec20152}). 
%Author in \cite{Zygalakis2011} perform a backward analysis for stochastic Langevin equation and propose a numerical method whose first order stochastic  modified equation remain a Lagevin equation.
For the SHSs with additive noise or with multiplicative noises for which the Hamiltonian $H_r(p,q), r\geq 1$ associated to the diffusion parts depend only on $p$ or only on $q$, authors in \cite{Wanghongsun2016} develop the approach of  constructing the stochastic modified equations of weak $k+k'$ order $(k'\geq 1)$ apart from the weak  $k$ order 
%with higher weak orders 
stochastic symplectic methods via their generating functions. 
The author in \cite{Anton2019} presents a backward error analysis for a stochastic symplectic method of  weak order one   by constructing the stochastic modified equation at the level of the associated Kolmogorov equation, and obtains an expansion of the weak error  associated with the numerical scheme.  %obtains the weak error between the symplectic scheme and stochastic modified equation of the SHS, where the approximation of the modified equation is analyzed by the backward error analysis. %  presents a backward error analysis by constructing the modified equation for an SHS and a symplectic scheme of first weak order; 
For the SHS driven by rough path, authors in \cite{chen2019stochastic}
construct a new type of stochastic modified equation for stochastic symplectic method, which is proved to have a Hamiltonian formulation, and obtain the pathwise convergence order of the truncated stochastic modified equation.  Authors in \cite{DAmbrosio2023} consider both the  It\^{o} SHS with separable Hamiltonian and additive noise and the  Stratonovich SHS, and present %construct stochastic modified equations for both the Stratonovich SHS and the separable  It\^{o} SHS, and present 
long-term estimates of the Hamiltonian deviation for stochastic symplectic methods by virtue of the corresponding stochastic modified equations and the backward error analysis.
% applied to rough Hamiltonian systems.
%the modified equations in weak convergence sense are derived via the generating function in \cite{wang2016modified}. 
%In \cite{wang2018computing}, the modified coefficient is deduced for the symplectic splitting methods such that the corresponding modified equation preserves the symplectic structure. 
%Furthermore, preservation of the symplectic structure of the stochastic Hamiltonian system driven by rough paths is also proved in \cite{hong2018symplectic}, 
%where stochastic modified equations for stochastic symplectic methods are also proposed in \cite{chen2019stochastic}.
%In contrast, there has been less research on the infinite-dimensional case. We are only aware of recent work \cite{chen2023large}, where the superiority of stochastic symplectic methods for the linear stochastic Schr\"{o}dinger equation is revealed by the LDP. 
%It is interesting to further study the superiority in a stronger sense. In this paper, we aim to investigate the superiority of stochastic symplectic methods for the SHS in Hilbert space in the almost surely sense via the law of the iterated logarithm (LIL).

%\begin{enumerate}
%	\item[(P)] Can one give an explanation for the superiority of stochastic symplectic methods in terms of the asymptotic behaviour? 
%\end{enumerate}
Another perspective to theoretically explain  the superiority of stochastic symplectic methods is via the large deviation principle, which indicates  the good ability in approximating the exponential decay speed of rare event probabilities related to the exact solution.  %related to the asymptotic behaviour of the underlying solution. 
%The first attempt is by means of 
To be specific,
%to explain the superiority of stochastic symplectic methods is by means of the large deviation principle (LDP), which indicates  their good ability in approximating the exponential decay speed of the ``hitting probability" of observables of the exact solution. 
authors in \cite{chen2021asymptotically} and \cite{chen2023large}, 
taking the linear stochastic oscillator and the linear stochastic Schr\"{o}dinger equation 
as the test equations, respectively, prove that stochastic symplectic methods asymptotically preserve the large deviation principle of observables of the exact solution, but non-symplectic ones do not. From these results, atypically large deviations of observables from the average value are illustrated for the SHS and its stochastic symplectic methods.

%In this paper, we attempt 
%Different from the perspectives, 
In order to comprehend the good ability of stochastic symplectic methods in capturing the asymptotic behaviour of the underlying solution process, this paper presents a new perspective to
%are from the perspective of
quantify the typical fluctuation of solution processes of the SHS and its stochastic symplectic methods as time goes to infinity.  
In classical probability theory, the fundamental probabilistic limit theorem for describing the maximum possible fluctuation of a stochastic process pathwisely over the long term is known as the law of iterated logarithm (LIL), providing the specific scaling function that characterizes the almost sure asymptotic growth of the process. %is of maximum magnitude.
%LIL is commonly used to describe the maximum possible fluctuation around the expected value of a stochastic process pathwisely over the long-term, providing the specific scaling function that is of maximum magnitude. 
One of the most important examples possessing the LIL is  the standard Brownian motion $B(t)$,  which satisfies $
\limsup_{t\rightarrow \infty} \frac{\left|B(t)\right|}{f(t)}= \sqrt{2}\; a.s.
$ with the scaling function $f(t)=\sqrt{t \log \log t}$; see e.g. \cite{schilling2014brownian}.  This result shows that  the utmost  fluctuation of Brownian motion asymptotically  grows as $\sqrt{2t \log \log t}$. 
With the aid of the LIL, we are going to investigate the following questions:
%{\color{red}However, less is known about the case of the SHS   in Hilbert space. A natural question arises: }
\begin{enumerate}
	\item[(\romannumeral1)] Does the SHS   obey the LIL with some scaling function $f$? Namely, is the upper limit of the exact solution with the scaling function $f$ a.s. equal to some non-zero constant $\gamma$? 
	%If so, one can apply a numerical method to the corresponding system to obtain a numerical solution. Denote the parameter of numerical discretization by $\Delta$. Then, it is meaningful to ask: %For a stochastic Hamiltonian system obeying the LIL with the scaling function $f$, it is natrual to ask: 
	%\item Does the stochastic Hamiltonian systems obey the LIL?
	\item[(\romannumeral2)] If so, is there a numerical method that also obeys the LIL with the same scaling function $f$? Namely, is the upper limit of the numerical solution with the scaling function $f$ a.s. equal to some non-zero constant $\gamma^\Delta$ which may depend on the mesh-size $\Delta$?  
	\item[(\romannumeral3)] Further, can the numerical method asymptotically preserve the LIL of the exact solution? Namely, does  $\gamma^\Delta$ converge to $\gamma$ as $\Delta$ takes limit?  %the limit of the LIL for the numerical method and that of the exact solution?
\end{enumerate}

In this paper, we take the linear SHS in the Hilbert space $\mathbb{U} \times \mathbb{U}$  %{\color{red}whose Hamiltonians are }
\begin{equation} \label{GeneralSHS}
	d\begin{pmatrix}
		X(t) \\
		Y(t)
	\end{pmatrix}=\begin{pmatrix}
		0 & B \\
		-B & 0
	\end{pmatrix}  \begin{pmatrix}
		X(t) \\
		Y(t)
	\end{pmatrix} d t+\begin{pmatrix}
		\alpha_1 \\
		\alpha_2
	\end{pmatrix} d W(t), \quad \begin{pmatrix}
		X(0) \\
		Y(0)
	\end{pmatrix} =
	\begin{pmatrix}
		X_0 \\
		Y_0
	\end{pmatrix}
\end{equation}
as a test model, and investigate the LILs for both the exact solution and its numerical methods.  Here,  $X, Y \in \mathbb{U}$ with $(\mathbb{U},\langle\cdot,\cdot \rangle_{\mathbb{R}})$ being a real-valued separable Hilbert space, $X_0,Y_0 \in \mathbb{U}$ are deterministic, $\{W(t)\}_{t\geq 0}$ is a $\mathbb{U}$-valued $Q$-Wiener process on a filtered probability space $(\Omega,\mathcal{F}, \{\mathcal{F}_t\}_{t\geq 0},\mathbb{P})$, $B$ is a self-adjoint linear operator. Precise assumptions on $W$ and $B$ are given in Section \ref{SectionGeneral}. 
%Considering that the linear SHS is a typical model, which includes linear stochastic oscillator and linear stochastic Schr\"{o}dinger equation, we first investigate the LIL for its exact solution. 
To prove the LIL for the exact solution of \eqref{GeneralSHS}, the difficulty lies in that the stochastic convolution in the mild form of the exact solution is not a martingale, leading to the non-trivial analysis. To overcome this difficulty, the key is to extract a suitable martingale difference series  providing the predominant  contribution to the upper limit, %{\color{blue} so as to ensure a balanced convergence between this series and the reminder,} 
where the time-change theorem is fully utilized to prove the remainder after the extraction converges to 0. Based on this analysis, we present the lower bound of the LIL.  %we introduce an auxiliary process that takes values in the corresponding complex-valued Hilbert space.
%, which plays an important role in proving the LIL for the exact solution of the linear SHS.
%Focusing on these questions, we first consider the linear SHS and investigate the LIL for the exact solution. The auxiliary process is introduced, which takes values in the corresponding complex-valued Hilbert space. 
%The prerequisite is to rewrite the norm of auxiliary process into a transparent form, which is related to a summation of real-valued Gaussian random variables (see ), based on the Riesz representation theorem. In the analysis of lower bound of the LIL, the key is to extract a suitable series of Gaussian random variables providing the main contribution to the upper limit, where the time-change theorem for martingales is fully utilized to prove the remainder after the extraction converges to 0.
To further prove the upper bound of the LIL, we introduce an approach based on the Borell--TIS inequality, through which the delicate estimate of the small probability that the exact solution deviates from the scaling function is established.  %a crucial step is to estimate the probability that the maximal value of the process deviates from the scaling function over a finite-time horizon. This suggests the use of the Borell--TIS inequality, which gives the concrete bound for the  tail probability of a Gaussian random variable.
%To further prove the upper bound of the LIL, we apply the Borell--TIS inequality to derive concrete estimate of the small probability that the maximal value of the auxiliary process deviates from the scaling function over a finite-time horizon. 
With the detailed analysis, we finally obtain the LIL for $\mathscr{X}(t)$  with the scaling function $f(t)=\sqrt{t \log\log t}$, namely, 
$$\limsup_{t \rightarrow \infty} \frac{\mathscr{X}(t)}{\sqrt{t \log \log t}}  = \sqrt{\alpha_1^2+\alpha_2^2} \sup_{j\in \mathbb{N}^+} \sqrt{\eta_j} \quad a.s.,$$
where $\mathscr{X}(t)\in\{\|X(t)\|_{\mathbb{R}}, \|Y(t)\|_{\mathbb{R}}, (\|X(t)\|^2_{\mathbb{R}} +\|Y(t)\|^2_{\mathbb{R}})^{\frac{1}{2}}\}$, and $\{\eta_j\}_{j\in\mathbb{N}^+}$ is the sequence of eigenvalues of $Q$. 
%For further details, as the preparation for our main proof, this paper initially utilizes the martingale time-change theorem for martingales to transform the existing law of Brownian motion into rough estimates for the stochastic processes we are focusing on. Additionally, with these basis estimates, we employ one of the proofs of the law of iterated logarithm for Brownian motion as the main framework(see \cite[Theorem 22.1]{klenke2013probability}). Starting from the Borel--Cantelli lemma, with methods including the elementary inequality of Gaussian processes and the Borell--TIS inequality to optimize the above estimates, we derive the LIL for exact solutions and their numerical approximations.

To further study the LIL for numerical methods, we apply the spectral Galerkin method in spatial direction, and a class of  one-step numerical methods in temporal direction to obtain a general class of fully discrete numerical methods $\{X^{M,\tau}(t_n),Y^{M,\tau}(t_n)\}_{n\in\mathbb{N}^+}$. Here, $M,\tau$ are the spectral projection dimension and time step-size, respectively.
% is based on the symplectic methodtake two kinds of typical stochastic Hamiltonian systems, i.e., linear stochastic oscillator and linear stochastic Schr\"{o}dinger equation, as test equations. %, and investigate the LIL for their numerical methods. The exact solutions of these test equations are proved to obey the LIL, as applications to the LIL results of the linear SHS. 
%We first use Theorem \ref{LILContinuous} to show that the solution $X(t)$ of the linear stochastic oscillator $\ddot{X}(t)+X(t)=\alpha \dot{W}(t)$ with $\alpha >0$ obeys the LIL with the form of 
%$$\limsup_{t \rightarrow \infty} \frac{X(t)}{\sqrt{ t \log \log t}}= \alpha \quad a.s.$$ %where $\alpha>0$ is the coefficient of white noise. Then we introduce a class of  common one-step numerical methods for the linear stochastic oscillator to investigate their preservation of the LIL of the exact solution with the same scaling function $f$. 
The compact form of the numerical solution is formulated based on dimensionality reduction and iteration argument, with the explicit expression of coefficients being presented. The key in the proof of the LIL lies in the technical estimates for the discrete stochastic convolution of the numerical solution. We prove that the stochastic symplectic methods obey the LIL with the same scaling function $f$ as that of the exact solution case.  %, which take the form of $$\limsup_{n \rightarrow \infty} \frac{x_n}{\sqrt{  t_n\log \log t_n}}= \alpha \sqrt{\xi} \quad a.s.,$$ where $\xi$ is used to denote a constant related to the step-size and the coefficients of specific method. 
Further, we present that as $\tau \rightarrow 0$ and $M\rightarrow\infty$, the limit for the LIL of stochastic symplectic methods coincide with the one for that of the exact solution, namely, 
$$
\lim_{M\rightarrow \infty} \lim _{\tau \rightarrow 0}\limsup_{n \rightarrow \infty} \frac{\mathscr{X}^{M,\tau}_{\mathrm{sym}}(t_n)}{\sqrt{ t_n \log \log t_n}}
= \sqrt{\alpha_1^2+\alpha_2^2} \sup_{j\in \mathbb{N}^+} \sqrt{ \eta_j} \quad a.s.,
$$ 
where $\mathscr{X}^{M,\tau}_{\mathrm{sym}}(t_n)$ denotes the numerical counterpart of $\mathscr{X}$ for stochastic symplectic methods. 
By contrast, we prove that the non-symplectic methods %, taking the form of ...,  
do not obey the LIL. %, namely, for any given $\epsilon>0$, 
%where $\mathscr{X}^{M,\tau}_{\mathrm{n\text{-}sym}}(t_n)$ denotes the numerical counterpart of $\mathscr{X}$ for non-symplectic methods.
%Similar results hold for the numerical methods of linear stochastic Schr\"{o}dinger equation. Namely, the full discretization whose spatial direction is the spectral Galerkin method, and temporal direction is based on the symplectic method, is shown to obey the LIL with the scaling function $f$, while non-symplectic methods do not have this property. 
%{\color{blue}To our knowledge, the use of the LIL in the direction of explaining the good ability of stochastic symplectic methods in capturing the asymptotic behaviour of the underlying solution process has not been reported yet.} 
%To our knowledge, the use of the LIL in the direction of providing explanation of superiority for stochastic symplectic methods has not been reported.
%In this regard, we establish the superiority of symplectic methods over non-symplectic ones via LIL for the linear SHS, which has not been reported before to our knowledge.
Finally, we apply theoretical results to the linear stochastic oscillator and the linear stochastic Schr\"{o}dinger equation and obtain the LILs for both exact solutions and their stochastic symplectic methods. 
This paper is organized as follows: In Section \ref{SectionGeneral}, we prove the LIL for the exact solution of the linear SHS based on the time-change theorem for martingales and the Borell--TIS inequality. In Section \ref{SectionDiscrete}, we establish the LIL for stochastic symplectic methods of the linear SHS, and prove the asymptotic preservation of the LIL of the exact solution. Section \ref{SectionApplication} is devoted to applications of theoretical results to the linear stochastic oscillator and the linear stochastic Schr\"{o}dinger equation, respectively.%extends our findings to the infinite-dimensional Hamiltonian systems, where analogous results are derived for the stochastic linear Schrödinger equation.

At the end of this section, we give some notations for the following content. We use $\log t$ to denote the natural logarithm $\log_e t$ and  use $\langle M \rangle (t)$ to denote the quadratic variation process of a martingale $M(t)$. Let $R=\mathcal{O}\left(h^p\right)$  denote $|R| \leq Ch^p$ for all sufficiently small $h$
%, $R=o\left(h^p\right) (h\rightarrow h_0)$ denote $\lim_{h\rightarrow h_0} \frac{|R|}{h^p}=0$ 
and $f(h) \sim h^p$ claim that $f(h)$ and $h^p$ are equivalent infinitesimal. Denote by $\operatorname{Var}(\cdot)$ the variance for random variables. Let $\mathbf{i}$ be the imaginary unit. For a complex-valued number $x$, let $\Re x$  be its real part and $\Im x$ be its imaginary part. %Furthermore, $\|\cdot\|_2$ denotes the 2-norm of a vector or matrix and $\|\cdot\|_F$ denotes the Frobenius norm of a matrix. 
Throughout this paper, let $C$ denote an arbitrary constant which may vary from one line to another. 
%With above theorems, we can obtain our main results in the following sections.
\begin{comment}
	\section{Preliminaries}
	In this section, we give some preliminaries for the study of the law of iterated logarithm. 
	It is well-known that the law of iterated logarithm is an important result on the asymptotic behaviour of finite-dimensional standard Brownian motion (see \cite[Theorem 11.1]{schilling2014brownian} for a proof). Let $(W(t)_{t\geq0})$ be a standard one-dimensional Brownian motion.
	\begin{thm} For $(W(t)_{t\geq0})$, it holds that
		\begin{equation}
			\limsup_{t\rightarrow \infty} \frac{W(t)}{\sqrt{2t \log \log t}}= 1 \quad a.s.
		\end{equation}
	\end{thm}
	Since $(-W(t))_{t\geq 0}$ is still a Brownian motion, as a corollary of the above theorem, it also holds that
	\begin{equation} \label{absoluteBt}
		\limsup_{t\rightarrow \infty} \frac{\left|W(t)\right|}{\sqrt{2t \log \log t}}= 1 \quad a.s. ,
	\end{equation}
	which is also called the law of iterated logarithm for Brownian motion. In the following content, we always use (\ref{absoluteBt}) when referring the law of iterated logarithm for Brownian motion.
\end{comment}
\section{LIL for exact solution of linear SHS} \label{SectionGeneral}
In this section, we investigate the LIL for the linear SHS \eqref{GeneralSHS} based on the time-change theorem for martingales and the Borell--TIS inequality. %An auxiliary process is introduced which takes values in the corresponding complex-values Hilbert space to obtain the desired LIL for the exact solution.
\begin{comment}
	Consider the general linear SHS \ref{GeneralSHS} on $\mathbb{U}\times\mathbb{U}$:
	\begin{equation} \label{GeneralSHS}
		d\begin{pmatrix}
			X(t) \\
			Y(t)
		\end{pmatrix}=\begin{pmatrix}
			0 & B \\
			-B & 0
		\end{pmatrix}  \begin{pmatrix}
			X(t) \\
			Y(t)
		\end{pmatrix} d t+\begin{pmatrix}
			\alpha_1 \\
			\alpha_2
		\end{pmatrix} d W(t),\quad \begin{pmatrix}
			X(0) \\
			Y(0)
		\end{pmatrix}=\begin{pmatrix}
			X_{0} \\
			Y_{0}
		\end{pmatrix},
	\end{equation}
	which leads to
\end{comment}
%Here,  $\mathbb{U}$ is a real-valued Hilbert space, $\alpha_1,\alpha_2$ are constants in $\mathbb{R}$. 
%The Hamiltonians here are $\mathcal{H}_1=-\frac{X^2}{2}-\frac{Y^2}{2}, \mathcal{H}_2=\alpha_2 X-\alpha_1 Y$. 

We consider a densely defined, linear, self-adjoint and positive definite operator $B: \operatorname{dom} (B) \subset \mathbb{U} \rightarrow \mathbb{U}$ which is not necessarily bounded but with compact inverse. Suppose $B e_k=\lambda_k e_k$ for some non-decreasing sequence $\left\{\lambda_k\right\}_{k \in \mathbb{N}}$, where $\left\{e_k\right\}_{k \in \mathbb{N}}$ forms an orthonormal basis of %both $\left(\mathbb{H},\langle\cdot, \cdot\rangle_{\mathbb{C}}\right)$ and 
$\left(\mathbb{U},\langle\cdot, \cdot\rangle_{\mathbb{R}}\right)$. Let $W(t)$ be a $\mathbb{U}$-valued $Q$-Wiener process  on a complete filtered probability space $(\Omega,\mathcal{F},\{\mathcal{F}_t\}_{t\geq 0},\mathbb{P})$, which can be represented as $W(t)=\sum_{k=1}^{+\infty} \sqrt{\eta_k} \beta_k(t) e_k$. Here, $\{\beta_{k}\}_{k=1}^\infty $ is a sequence of independent standard one-dimensional Brownian motions, 
and $Q$ is a non-negative symmetric operator on $\mathbb{U}$ with finite trace, whose eigenvalues and eigenvectors are, respectively, $\eta_k$ and $e_k$,  $k\in\mathbb{N}^+$.

The exact solution of \eqref{GeneralSHS} reads as
\begin{equation} \nonumber %\label{ExactSolution}
	\begin{aligned}
		X(t) 
		& =\cos(Bt)X_0+\sin(Bt)Y_0+\alpha_1 \int_0^t \cos ((t-s) B) d W(s)+\alpha_2 \int_0^t \sin ((t-s) B) d W(s), \\
		Y(t)& =-\sin(Bt)X_0+\cos(Bt)Y_0-\alpha_1 \int_0^t \sin ((t-s) B) d W(s)+\alpha_2 \int_0^t \cos ((t-s) B) d W(s) .
	\end{aligned}
\end{equation}

Below, we give the LIL result for the exact solution of the linear SHS. 
%Suppose $Q e_k=\eta_k e_k$ so that , where $\{\beta_{k}\}_{k=1}^\infty $ is a sequence of independent standard one-dimensional Brownian motions. % and $\{\mathcal{F}_t\}_{t\geq 0}$ satisfies the usual conditions.   %Additionally, $H_0^1(0, \pi) :=\left\{f \in H^1(0, \pi): f(0)=f(\pi)=0\right\}$, where $ H^1(0, \pi)$ denotes the classical Sobolev space of complex-valued functions.
%	For a bounded linear operator $A: U \rightarrow H$, where $\left(U,\|\cdot\|_U,\langle\cdot, \cdot\rangle_U\right) $ and $\left(H,\|\cdot\|_H,\langle\cdot, \cdot\rangle_H\right)$ are two separable Hilbert spaces, denote the operator norm of A by $\|A\|_{\mathscr{L}(U, H)}$. Especially, we set $\|\cdot \|_{\mathscr{L}(U, U)}=\|\cdot\|_{\mathscr{L}(U, U)}$ for short. 
%We also assume that the operator $A^{-1}$ satisfies $\sum^\infty_{j=1}|\langle A^{-1} e_j,e_j \rangle|<\infty$.
\begin{thm}\label{LILContinuous}
	For $\mathscr{X}(t) \in \{\|X(t)\|_{\mathbb{R}}, \|Y(t)\|_{\mathbb{R}}, (\|X(t)\|^2_{\mathbb{R}} +\|Y(t)\|^2_{\mathbb{R}})^{\frac{1}{2}}\}$, the LIL holds: 
	\begin{equation} \nonumber
		\begin{aligned}
			&\limsup_{t \rightarrow \infty} \frac{\mathscr{X}(t)}{\sqrt{ t \log \log t}}  = \sqrt{\alpha_1^2+\alpha_2^2} \sup_{j\in \mathbb{N}^+} \sqrt{\eta_j} \quad a.s.
			%		\limsup_{t \rightarrow \infty} \frac{\left\|Y\left(t\right)\right\|_{\mathbb{R}}}{\sqrt{ t \log \log t}}  =\sqrt{\alpha_1^2+\alpha_2^2} \sup_{j\in \mathbb{N}^+} \sqrt{\eta_j},\\
			%		&\limsup_{t \rightarrow \infty} \frac{(\|X(t)\|^2_{\mathbb{R}} +\|Y(t)\|^2_{\mathbb{R}})^{\frac{1}{2}}}{\sqrt{ t \log \log t}}  =\sqrt{\alpha_1^2+\alpha_2^2} \sup_{j\in \mathbb{N}^+} \sqrt{\eta_j},
		\end{aligned}
	\end{equation}
\end{thm}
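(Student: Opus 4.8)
The plan is to establish matching lower and upper bounds for $\limsup_{t\to\infty}\mathscr{X}(t)/\sqrt{t\log\log t}$, working first with the stochastic convolution part since the deterministic part $\cos(Bt)X_0+\sin(Bt)Y_0$ (and its analogue for $Y$) is bounded in $\mathbb U$ and hence contributes nothing in the limit. Writing $X(t)$ in terms of its components along the basis $\{e_k\}$, the $k$-th component of the stochastic convolution is a scalar stochastic integral against $\beta_k$; by the time-change theorem for continuous martingales, $\alpha_1\int_0^t\cos((t-s)\lambda_k)\sqrt{\eta_k}\,d\beta_k(s)+\alpha_2\int_0^t\sin((t-s)\lambda_k)\sqrt{\eta_k}\,d\beta_k(s)$ is, for fixed $t$, a centered Gaussian with variance $(\alpha_1^2+\alpha_2^2)\eta_k t$. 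The heuristic target constant $\sqrt{\alpha_1^2+\alpha_2^2}\,\sup_j\sqrt{\eta_j}$ then reflects that the utmost fluctuation is governed by the single mode (or finitely many, if the sup is attained) carrying the largest noise intensity $\eta_j$.

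For the \textbf{lower bound}, I would fix $j$ and look only at the contribution of mode $e_j$. The key difficulty flagged in the introduction is that $t\mapsto\alpha_1\int_0^t\cos((t-s)B)dW(s)+\alpha_2\int_0^t\sin((t-s)B)dW(s)$ is \emph{not} a martingale in $t$ (the integrand depends on the terminal time $t$). The strategy is to evaluate along a lacunary sequence $t_n$ (e.g. $t_n=\theta^n$ for $\theta>1$) and, at each scale, use a trigonometric identity $\cos((t-s)\lambda_j)=\cos(t\lambda_j)\cos(s\lambda_j)+\sin(t\lambda_j)\sin(s\lambda_j)$ to separate the $t$-dependence out of the integral, exposing a genuine martingale difference series $\int_{t_{n-1}}^{t_n}\cos(s\lambda_j)\,d\beta_j(s)$, $\int_{t_{n-1}}^{t_n}\sin(s\lambda_j)\,d\beta_j(s)$ whose increments over disjoint blocks are independent Gaussians. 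A Borel--Cantelli argument on these independent increments, combined with a time-change to control the remainder terms (the cross terms and the tail of the series), yields $\limsup_t \mathscr{X}(t)/\sqrt{t\log\log t}\ge\sqrt{\alpha_1^2+\alpha_2^2}\sqrt{\eta_j}$ a.s. for each $j$; taking the supremum over $j\in\mathbb N^+$ gives the lower bound. One must be slightly careful that a countable intersection of a.s. events is still a.s.

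For the \textbf{upper bound}, I would use the Borell--TIS inequality applied to the Gaussian process $s\mapsto \mathscr{X}(s)$ restricted to windows $[t_{n-1},t_n]$ along a geometric sequence. On such a window, $\sup_{s\in[t_{n-1},t_n]}\mathbb E[\mathscr{X}(s)]$ is of order $\sqrt{(\alpha_1^2+\alpha_2^2)(\sup_j\eta_j)\,t_n}$ and $\sigma^2:=\sup_s \operatorname{Var}(\mathscr{X}(s))$ is of the same order in $t_n$; Borell--TIS then gives $\mathbb P\big(\sup_{s\in[t_{n-1},t_n]}\mathscr{X}(s)\ge(1+\delta)\sqrt{(\alpha_1^2+\alpha_2^2)(\sup_j\eta_j)\,t_n\log\log t_n}\big)$ decaying fast enough (like $n^{-(1+\delta')}$, using $\log\log t_n\sim\log n$) for Borel--Cantelli to apply. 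Passing from the geometric subsequence back to all $t$ costs only a factor $\to1$ as $\theta\downarrow1$. The main obstacle here is getting sharp enough control of the mean and variance of $\mathscr{X}(s)$ uniformly over the window --- in particular handling the oscillatory integrals $\int_0^s\cos((s-r)\lambda_k)\cos((s-r)\lambda_\ell)dr$ etc.\ across all modes $k,\ell$ so that both the mean functional and the variance proxy are $(\alpha_1^2+\alpha_2^2)(\sup_j\eta_j)\,s(1+o(1))$ and not something larger; this is where the self-adjointness of $B$, the finite trace of $Q$, and the precise form of $\mathscr{X}$ (norm, vs.\ combined norm) enter. Combining the two bounds and letting the auxiliary parameters degenerate yields the claimed equality.

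I expect the hardest single step to be the lower bound's martingale extraction: identifying the right martingale difference series so that the leftover (cross terms mixing $\cos(t\lambda_j)$ and $\sin(t\lambda_j)$ pieces, plus contributions of modes $k\ne j$) genuinely vanishes after normalization by $\sqrt{t\log\log t}$, which is exactly where the time-change theorem for martingales does the work of turning an awkward $t$-dependent integral into something to which classical LIL-type estimates apply.
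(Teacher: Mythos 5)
Your overall strategy mirrors the paper's: for the lower bound, a block decomposition along a geometric sequence $t_n=m^n$, with the increments over $[t_{n-1},t_n]$ forming independent Gaussians treated by Borel--Cantelli, and the remainder over $[0,t_{n-1}]$ rewritten via $\cos((t_n-s)\lambda_j)=\cos(t_n\lambda_j)\cos(s\lambda_j)+\sin(t_n\lambda_j)\sin(s\lambda_j)$ and controlled through the LIL for the genuine martingales $\int_0^t\cos(s\lambda_j)\,d\beta_j$, $\int_0^t\sin(s\lambda_j)\,d\beta_j$ obtained from the time-change theorem; for the upper bound, Borell--TIS plus Borel--Cantelli along geometric times, with the lacunarity parameter sent to $1$. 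This is exactly the architecture of Proposition \ref{LILContinuousGeneral} and its two steps.

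There is, however, one quantitative step that fails as you state it, and it is precisely the step that decides the constant. The single-mode variance is not $(\alpha_1^2+\alpha_2^2)\eta_k t$: writing $\alpha_1\cos((t-s)\lambda_k)+\alpha_2\sin((t-s)\lambda_k)=\sqrt{\alpha_1^2+\alpha_2^2}\,\cos((t-s)\lambda_k-\varphi_k)$ and integrating $\cos^2$ gives variance $(\alpha_1^2+\alpha_2^2)\eta_k\bigl(\tfrac{t}{2}+\mathcal{O}(1)\bigr)$. That factor $\tfrac12$ is what produces the constant $\sqrt{\alpha_1^2+\alpha_2^2}\,\sup_j\sqrt{\eta_j}$ rather than $\sqrt{2(\alpha_1^2+\alpha_2^2)}\,\sup_j\sqrt{\eta_j}$: if you run your Gaussian-tail/Borel--Cantelli lower bound and your Borell--TIS upper bound with the variance you wrote, both land at $\sqrt{2}$ times the claimed constant, so the argument as written does not yield the theorem. (In the paper the $\tfrac12$ enters through $\operatorname{Var}(A_{n,\rho})=\tfrac{\Delta t_n}{2}\phi(\rho)+\mathcal{O}(1)$ and $\sigma^2_{t_{n+1},\rho}\le\tfrac{t_{n+1}}{2}\phi(\rho)+\mathcal{J}_0$.) Two further points to tighten: Borell--TIS requires $\mathbb{E}\bigl[\sup_{s\in\text{window}}(\cdot)\bigr]$ of the centered Gaussian family and the supremum of the pointwise variances of its one-dimensional projections, not $\sup_s\mathbb{E}[\mathscr{X}(s)]$ or $\operatorname{Var}(\mathscr{X}(s))$; the expected supremum is bounded by $\mathcal{O}(\sqrt{t_n})$ via the Burkholder--Davis--Gundy inequality applied to the time-separated martingales, which is what makes it negligible against $\sqrt{t_n\log\log t_n}$. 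And for the combined norm $(\|X\|^2_{\mathbb{R}}+\|Y\|^2_{\mathbb{R}})^{1/2}$ one must check that the cross-correlations between the $X$- and $Y$-components do not inflate the constant; the paper does this by complexifying, $Z=X+\mathbf{i}Y$, and using $(\alpha_1\rho_{1,j}+\alpha_2\rho_{2,j})^2+(\alpha_2\rho_{1,j}-\alpha_1\rho_{2,j})^2=(\alpha_1^2+\alpha_2^2)(\rho_{1,j}^2+\rho_{2,j}^2)$, a step your plan only gestures at.
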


To prove Theorem \ref{LILContinuous}, we introduce the following auxiliary process 
$$\tilde{X}(t) :=\cos(Bt)X_0+\sin(Bt)Y_0+\mathfrak{a}_1 \int_0^t \cos ((t-s) B) d W(s)+\mathfrak{a}_2 \int_0^t \sin ((t-s) B) d W(s) $$ with $\mathfrak{a}_1,\mathfrak{a}_2 \in \mathbb{C}$, and study the LIL for $\tilde{X}(t)$ in a complex-valued Hilbert space. To proceed, we give some notations.
Let $\mathbb{H}$ be the complex-valued Hilbert space corresponding to $\mathbb{U}$ with the complex inner product  $\langle \cdot, \cdot \rangle_{\mathbb{C}}$ and the real inner product $\langle\cdot, \cdot\rangle_{\mathbb{R}} = \Re \langle \cdot, \cdot \rangle_{\mathbb{C}} $.
% the following auxiliary process in $\mathbb{H}$ with general coefficients $\mathfrak{a}_1,\mathfrak{a}_2 \in \mathbb{C}$, where $\mathbb{H}$ denotes the corresponding complex-valued Hilbert space of $\mathbb{U}$. 
%For $\mathbb{H}$, denote the complex inner product by  $\langle \cdot, \cdot \rangle_{\mathbb{C}}$ and the real inner product by $\langle\cdot, \cdot\rangle_{\mathbb{R}} = \Re \langle \cdot, \cdot \rangle_{\mathbb{C}} $. %Especially, for $u,v \in \mathbb{U}$, $\langle u, v \rangle_{\mathbb{U}}=\langle u, v \rangle_{\mathbb{R}}$. 
We would like to mention that $\left\{e_k\right\}_{k \in \mathbb{N}}$ also forms an orthonormal basis of 
$\left(\mathbb{H},\langle\cdot, \cdot\rangle_{\mathbb{C}}\right)$.
%Furthermore, we use $\| \cdot \|_{\mathbb{C}}$ and $\| \cdot \|_{\mathbb{R}}$ to denote the norm generated by complex inner product and by real inner product on $\mathbb{H}$, respectively.
\begin{prop}\label{LILContinuousGeneral}
	$\tilde{X}(t)$ obeys the following  LIL:
	\begin{equation} \nonumber
		\limsup_{t \rightarrow \infty} \frac{\|\tilde{X}\left(t\right)\|_{\mathbb{R}}}{\sqrt{ t \log \log t}}  =\sup_{\sum^\infty_{j=1}  (\rho_{1,j}^2+ \rho_{2,j}^2)=1}\sqrt{\phi(\rho)},
	\end{equation}
	where $\rho := \sum^\infty_{j=1} \left(\rho_{1,j} +\mathbf{i} \rho_{2,j}\right) e_j$ with $ \rho_{1,j},\rho_{2,j} \in \mathbb{R}$, and $$\phi(\rho) :=\sum^{\infty}_{j=1} ( (\Re\mathfrak{a}_1 \rho_{1,j} + \Im\mathfrak{a}_1 \rho_{2,j} )^2+(\Re\mathfrak{a}_2\rho_{1,j} + \Im\mathfrak{a}_2 \rho_{2,j} ) ^2)\eta_j.$$
\end{prop}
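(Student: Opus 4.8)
The plan is to prove matching upper and lower bounds for $\limsup_{t\to\infty}\|\tilde X(t)\|_{\mathbb R}/\sqrt{t\log\log t}$, after observing that the claimed right-hand side is exactly $\sup_{\|\rho\|_{\mathbb H}=1}\sqrt{\phi(\rho)}$ (since $\|\rho\|_{\mathbb H}^2=\sum_j(\rho_{1,j}^2+\rho_{2,j}^2)$ and $\sup$ commutes with $\sqrt{\ \cdot\ }$); abbreviate $\sup\phi:=\sup_{\|\rho\|_{\mathbb H}=1}\phi(\rho)$. First I would discard the initial-value part: as $B$ is self-adjoint, $\|\cos(Bt)X_0+\sin(Bt)Y_0\|_{\mathbb R}\le\|X_0\|_{\mathbb R}+\|Y_0\|_{\mathbb R}$ is bounded, so it suffices to treat (a continuous modification of) the stochastic convolution $Z(t):=\mathfrak a_1\int_0^t\cos((t-s)B)\,dW(s)+\mathfrak a_2\int_0^t\sin((t-s)B)\,dW(s)$. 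Expanding $\cos((t-s)B),\sin((t-s)B)$ by the addition formulas (spectrally) gives the factorization $Z(t)=U(t)\mathbf P(t)+V(t)\mathbf Q(t)$, where $\mathbf P(t):=\int_0^t\cos(sB)\,dW(s)$ and $\mathbf Q(t):=\int_0^t\sin(sB)\,dW(s)$ are honest $\mathbb H$-valued martingales while $U(t):=\mathfrak a_1\cos(tB)+\mathfrak a_2\sin(tB)$, $V(t):=\mathfrak a_1\sin(tB)-\mathfrak a_2\cos(tB)$ are bounded operators with $\|U(t)\|,\|V(t)\|\le|\mathfrak a_1|+|\mathfrak a_2|$; in coordinates, projecting onto a unit $\rho=\sum_k\rho_k e_k$, $\langle Z(t),\rho\rangle_{\mathbb R}=\sum_k\sqrt{\eta_k}\,(a_k(t)P_k(t)+b_k(t)Q_k(t))$ with $P_k,Q_k$ the scalar martingales $\int_0^t\cos(s\lambda_k)\,d\beta_k$, $\int_0^t\sin(s\lambda_k)\,d\beta_k$ and $a_k(t)=\Re(\overline{\rho_k}U_k(t))$, $b_k(t)=\Re(\overline{\rho_k}V_k(t))$. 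The computation driving the whole proof is that $\langle P_k\rangle(t)=\langle Q_k\rangle(t)=t/2+O(1)$, $\langle P_k,Q_k\rangle(t)=O(1)$ uniformly in $k$ (since $\lambda_k\ge\lambda_1>0$), and $a_k(t)^2+b_k(t)^2=\Re(\overline{\rho_k}\mathfrak a_1)^2+\Re(\overline{\rho_k}\mathfrak a_2)^2$ (the $t$-dependent phases cancel because $U_k,V_k$ form an orthogonal pair); hence $\operatorname{Var}\langle Z(t),\rho\rangle_{\mathbb R}=\tfrac t2\phi(\rho)+O(1)$ — note this is precisely the form of $\phi$ since $\Re(\overline{\rho_k}\mathfrak a_i)=\Re\mathfrak a_i\,\rho_{1,k}+\Im\mathfrak a_i\,\rho_{2,k}$ — and, taking suprema, the variance parameter of the centered Gaussian field $(s,\rho)\mapsto\langle Z(s),\rho\rangle_{\mathbb R}$ over $\{s\le t,\ \|\rho\|_{\mathbb H}\le1\}$ is $\le\tfrac t2\sup\phi+O(1)=:\sigma^2(t)$.

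For the lower bound, fix a unit vector $\rho$ and aim to show $\limsup_{t\to\infty}\|\tilde X(t)\|_{\mathbb R}/\sqrt{t\log\log t}\ge\sqrt{\phi(\rho)}$; taking the supremum over $\rho$ then yields the lower half (one may reduce to finitely supported, or even single-mode, $\rho$, since $\phi$ is continuous on the sphere and its supremum is attained on a single mode). Along the lacunary sequence $t_n=\theta^n$ with $\theta>1$, split $\langle Z(t_n),\rho\rangle_{\mathbb R}=G_n+H_n$, where $H_n$ is the contribution of $W$ restricted to $(t_{n-1},t_n]$ and $G_n$ is the propagation of $Z(t_{n-1})$ to time $t_n$. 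The sequence $\{H_n\}$ is the martingale difference series: $\mathbb E[H_n\mid\mathcal F_{t_{n-1}}]=0$, the $H_n$ are independent, centered Gaussian with $\operatorname{Var}(H_n)=\tfrac{t_n-t_{n-1}}2\phi(\rho)+O(1)=\tfrac{(1-\theta^{-1})}{2}\phi(\rho)\,t_n+O(1)$, capturing (as $\theta\to\infty$) essentially all of the target size. The remainder $G_n$ is the ``error after extraction'': since the propagator acts by per-mode rotations, its variance is controlled by that of the state at $t_{n-1}$, and the time-change (Dambis--Dubins--Schwarz) theorem applied to $P_k,Q_k$ — hence to $\mathbf P,\mathbf Q$, hence to $Z(t_{n-1})$ — identifies them with time-changed Brownian motions whose clocks are $t/2+O(1)$, so that the corresponding law of the iterated logarithm forces $|G_n|\le 2\sqrt{\sup\phi}\,\sqrt{t_{n-1}\log\log t_{n-1}}$ eventually a.s. (a direct Borel--Cantelli I estimate on the Gaussian $G_n$ gives the same bound), i.e. $|G_n|$ is an $O(\theta^{-1/2})$-fraction of $\sqrt{t_n\log\log t_n}$. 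The Gaussian lower tail gives $\mathbb P\bigl(H_n\ge(1-\delta)\sqrt{2\operatorname{Var}(H_n)\log\log t_n}\bigr)\gtrsim(\log n)^{-1/2}(n\log\theta)^{-(1-\delta)^2}$, which sums to $+\infty$, so by independence and the second Borel--Cantelli lemma $H_n\ge(1-\delta)\sqrt{2\operatorname{Var}(H_n)\log\log t_n}$ for infinitely many $n$, a.s. Combining this with the bound on $|G_n|$ and with $\|Z(t)\|_{\mathbb R}\ge|\langle Z(t),\rho\rangle_{\mathbb R}|$, and then letting $\theta\to\infty$ followed by $\delta\to0$, yields $\limsup_{t\to\infty}\|Z(t)\|_{\mathbb R}/\sqrt{t\log\log t}\ge\sqrt{\phi(\rho)}$.

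For the upper bound I would again use the grid $t_n=\theta^n$ and the Borell--TIS inequality. Set $S_n:=\sup_{t\in[t_{n-1},t_n]}\|\tilde X(t)\|_{\mathbb R}$; up to the bounded deterministic part, $S_n$ is the supremum of the centered Gaussian field $\{\langle Z(t),\rho\rangle_{\mathbb R}:t\in[t_{n-1},t_n],\ \rho\in D\}$ over a countable dense $D\subset\{\|\rho\|_{\mathbb H}\le1\}$, with variance parameter $\le\sigma^2(t_n)=\tfrac{t_n}2\sup\phi+O(1)$ and mean $\mathbb E S_n\le\mathbb E\sup_{t\le t_n}\|Z(t)\|_{\mathbb R}+O(1)=O(\sqrt{t_n})$, the last bound by Doob's $L^2$ maximal inequality applied to $\mathbf P,\mathbf Q$ together with $\|Z(t)\|_{\mathbb R}\le(|\mathfrak a_1|+|\mathfrak a_2|)(\|\mathbf P(t)\|_{\mathbb R}+\|\mathbf Q(t)\|_{\mathbb R})$. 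Borell--TIS then gives $\mathbb P(S_n>\mathbb E S_n+u)\le\exp\bigl(-u^2/(2\sigma^2(t_n))\bigr)$; taking $u=(\sqrt{\sup\phi}+\varepsilon)\sqrt{t_{n-1}\log\log t_{n-1}}-\mathbb E S_n$ and using $\mathbb E S_n=o(\sqrt{t_n\log\log t_n})$, one obtains, for $\theta$ sufficiently close to $1$ (depending on $\varepsilon$), $u^2/(2\sigma^2(t_n))\ge(1+\varepsilon')\log\log t_n$ with $\varepsilon'>0$, hence $\mathbb P\bigl(S_n>(\sqrt{\sup\phi}+\varepsilon)\sqrt{t_{n-1}\log\log t_{n-1}}\bigr)\le\bigl((n-1)\log\theta\bigr)^{-(1+\varepsilon')}$, which is summable. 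Borel--Cantelli I gives, a.s. for all large $n$ and all $t\in[t_{n-1},t_n]$, $\|\tilde X(t)\|_{\mathbb R}\le S_n\le(\sqrt{\sup\phi}+\varepsilon)\sqrt{t\log\log t}$; letting $\varepsilon\to0$ yields $\limsup\le\sqrt{\sup\phi}=\sqrt{\sup_{\|\rho\|_{\mathbb H}=1}\phi(\rho)}$, matching the lower bound and proving the proposition.

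The step I expect to be the main obstacle is precisely that the stochastic convolution is not a martingale: $Z(t)$ drags the $t$-dependent oscillatory operators $U(t),V(t)$ in front of the martingales, so neither $Z(t)$ nor its scalar projections are martingales and the Brownian law of the iterated logarithm is not directly available. The factorization $Z(t)=U(t)\mathbf P(t)+V(t)\mathbf Q(t)$ together with the phase-cancellation identity $a_k(t)^2+b_k(t)^2=\Re(\overline{\rho_k}\mathfrak a_1)^2+\Re(\overline{\rho_k}\mathfrak a_2)^2$ is what makes the argument go: it turns all relevant variances into $\tfrac t2\phi+O(1)$, and the time-change theorem then certifies that the $O(1)$ clock errors (the discrepancies $\langle P_k\rangle(t)-t/2$, etc.) are invisible to the $\limsup$, so that the carry-over $G_n$ extracted from the martingale difference series genuinely becomes negligible after normalization. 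The rest is quantitative bookkeeping: tracking constants through the two Borel--Cantelli steps so that, after optimizing over $\theta$, the emerging limit is exactly $\sup_{\|\rho\|_{\mathbb H}=1}\sqrt{\phi(\rho)}$, and verifying that this supremum is attained — which it is, because $\operatorname{tr}Q<\infty$ forces $\eta_k\to0$, so the optimal $\rho$ concentrates on a mode realizing $\max_k\eta_k$ with phase chosen to maximize $\Re(\overline{\rho_k}\mathfrak a_1)^2+\Re(\overline{\rho_k}\mathfrak a_2)^2$.
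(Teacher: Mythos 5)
Your proposal is correct, and its overall skeleton is the same as the paper's: reduce to the stochastic convolution, work along a lacunary grid $t_n=\theta^n$, prove the lower bound by extracting the independent Gaussian increments over $(t_{n-1},t_n]$ (your $H_n$, the paper's $A_{n,\rho}$), applying the Gaussian lower tail and the second Borel--Cantelli lemma, and showing the carried-over term (your $G_n$, the paper's $B_{n,\rho}$) is $O(\sqrt{t_{n-1}\log\log t_{n-1}})$ so that it vanishes after letting $\theta\to\infty$; and prove the upper bound via Borell--TIS plus the first Borel--Cantelli lemma. The genuine difference is how you handle the supremum over $\rho$. The paper fixes a deterministic unit $\rho$, bounds $\mathbb{E}\sup_{t\le t_{n+1}}W_{\sin,\cos,\rho}(t)$ by BDG applied mode by mode, applies Borell--TIS to the one-parameter field $t\mapsto W_{\sin,\cos,\rho}(t)$, and only afterwards passes to $\|\tilde X(t)\|_{\mathbb{R}}$ by evaluating the fixed-$\rho$ bound at the random, time-dependent maximizer $\tilde\rho(\omega)$ furnished by the Riesz representation; you instead apply Borell--TIS once to the Gaussian field indexed jointly by $(t,\rho)$ over $[t_{n-1},t_n]\times D$ (with $D$ a countable dense subset of the unit ball), whose supremum is exactly $\sup_t\|Z(t)\|_{\mathbb{R}}$, with the variance parameter $\tfrac{t_n}{2}\sup_\rho\phi(\rho)+O(1)$ and the mean controlled by Doob's $L^2$ maximal inequality for the $\mathbb{H}$-valued martingales $\mathbf{P},\mathbf{Q}$. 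This buys a cleaner conclusion: the bound with $\sup_\rho\phi(\rho)$ comes out directly, and you avoid the delicate step of inserting a random $\rho$ into an almost-sure bound proved only for each fixed $\rho$. Similarly, in the lower bound your fallback of bounding $|G_n|$ by a direct Gaussian tail estimate plus Borel--Cantelli I (variance of $G_n$ is at most $C\,\mathrm{tr}(Q)\,t_{n-1}$ uniformly) is simpler than the paper's route through the per-mode time-changed LIL (its Proposition on $M_{k,j}$) followed by summation over modes; either works, and note that the precise constant you quoted for the eventual bound on $|G_n|$ is immaterial --- any $n$- and $\theta$-independent constant suffices, which your Gaussian estimate delivers. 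Your phase-cancellation identity $a_k(t)^2+b_k(t)^2=\Re(\overline{\rho_k}\mathfrak{a}_1)^2+\Re(\overline{\rho_k}\mathfrak{a}_2)^2$ is exactly what makes all variances equal $\tfrac{t}{2}\phi(\rho)+O(1)$, matching the paper's computation of $\operatorname{Var}(A_{n,\rho})$ and of $\sigma^2_{t_{n+1},\rho}$, so the two arguments yield the same constant.
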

\begin{proof}[Proof of Theorem \ref{LILContinuous}]
	\textit{(\romannumeral1) Proof of the LIL for $\{\|X(t)\|_{\mathbb R}\}_{t\ge 0}$.}	Using Proposition \ref{LILContinuousGeneral} with $\mathfrak{a}_1=\alpha_1, \mathfrak{a}_2=\alpha_2$, we have
	\begin{equation} \nonumber
		\begin{aligned}
			\limsup_{t \rightarrow \infty} \frac{\left\|X\left(t\right)\right\|_{\mathbb{R}}}{\sqrt{ t \log \log t}}  = \sqrt{ \alpha^2_1 +\alpha^2_2}  \sup_{\sum^\infty_{j=1}  (\rho_{1,j}^2+ \rho_{2,j}^2)=1} (\sum^{\infty}_{j=1} \rho_{1,j}^2\eta_j)^{\frac{1}{2}}  \leq  \sqrt{\alpha_1^2+\alpha_2^2} \sup_{j\in \mathbb{N}^+} \sqrt{\eta_j}.
		\end{aligned}
	\end{equation}
	Since the  operator $Q$ has finite trace, there exist  $N_0\in\mathbb N^+$ and $j_0\in\{1,\ldots,N_0\}$ such that $\sup_{j\in \mathbb{N}^+} \eta_j = \sup_{j\in\{1,2,\ldots,N_0\}} \eta_j=\eta_{j_0}$. 
	%Denote $\sup_{j\in\{1,2,\ldots,N_0\}} \eta_j$ by $\eta_{j_0}$. 
	Letting $$ \rho_{1,j} =
	\Big\{ \begin{aligned}
		1, \quad &j=j_0, \\
		0, \quad &j\neq j_0
	\end{aligned}
	$$
	and $\rho_{2,j}=0$ for all $j \in \mathbb{N}^+$, 
	we have 
	$$
	\sup_{\sum^\infty_{j=1}  (\rho_{1,j}^2+ \rho_{2,j}^2)=1}(\sum^{\infty}_{j=1} \rho_{1,j}^2\eta_j)^{\frac{1}{2}} \geq \eta_{j_0} = \sup_{j\in \mathbb{N}^+} \sqrt{\eta_j},
	$$
	which leads to
	\begin{equation} \nonumber
		\begin{aligned}
			\limsup_{t \rightarrow \infty} \frac{\left\|X\left(t\right)\right\|_{\mathbb{R}}}{\sqrt{ t \log \log t}}  = \sqrt{\alpha_1^2+\alpha_2^2} \sup_{j\in \mathbb{N}^+} \sqrt{\eta_j}.%,\quad 
		\end{aligned}
	\end{equation}
	
	\textit{(\romannumeral2) Proof of the LIL for $\{\|Y(t)\|_{\mathbb R}\}_{t\ge 0}$.} Similar to the proof of the LIL for $\{\|X(t)\|_{\mathbb R}\}_{t\ge 0}$, by taking  $\mathfrak{a}_1=\alpha_2, \mathfrak{a}_2=-\alpha_1$, we obtain the LIL result for $\{\|Y(t)\|_{\mathbb R}\}_{t\ge 0}.$
	%	$$
	%	\limsup_{t \rightarrow \infty} \frac{\left\|Y\left(t\right)\right\|_{\mathbb{R}}}{\sqrt{ t \log \log t}}  =\sqrt{\alpha_1^2+\alpha_2^2} \sup_{j\in \mathbb{N}^+} \sqrt{\eta_j}.
	%	$$
	
	\textit{(\romannumeral3) Proof of the LIL for $\{(\|X(t)\|^2_{\mathbb R}+\|Y(t)\|^2_{\mathbb R})^{\frac12}\}_{t\ge 0}$.}	Notice that $Z(t):= X(t)+\mathbf{i}Y(t) $ satisfies
	$$
	\begin{aligned}
		Z(t)&= \cos(Bt)(X_0+\mathbf{i}Y_0) +\sin(Bt) (Y_0 -\mathbf{i} X_0)\\
		&\quad +(\alpha_1+\mathbf{i}\alpha_2) \int_0^t \cos ((t-s) B) d W(s)+(\alpha_2-\mathbf{i}\alpha_1) \int_0^t \sin ((t-s) B) d W(s),
	\end{aligned}
	$$
	and  $\|Z(t)\|_{\mathbb{R}}=(\|X(t)\|^2_{\mathbb{R}} +\|Y(t)\|^2_{\mathbb{R}})^{\frac{1}{2}}$. According to Proposition \ref{LILContinuousGeneral} with $\mathfrak{a}_1=\alpha_1+\mathbf{i}\alpha_2, \mathfrak{a}_2=\alpha_2-\mathbf{i}\alpha_1$, we derive
	%	$$
	\begin{align*}
		%\limsup_{t \rightarrow \infty} \frac{(\|X(t)\|^2_{\mathbb{R}} +\|Y(t)\|^2_{\mathbb{R}})^{\frac{1}{2}}}{\sqrt{ t \log \log t}}  =
		\limsup_{t \rightarrow \infty} \frac{\|Z(t)\|_{\mathbb{R}}}{\sqrt{ t \log \log t}}
		&=   \sup_{\sum^\infty_{j=1}  (\rho_{1,j}^2+ \rho_{2,j}^2)=1}\Big(\sum^{\infty}_{j=1} \left( (\alpha_1\rho_{1,j} + \alpha_2 \rho_{2,j} )^2+(\alpha_2\rho_{1,j} -\alpha_1 \rho_{2,j} ) ^2\right)\eta_j \Big)^{\frac{1}{2}} \\
		&=   \sup_{\sum^\infty_{j=1}  (\rho_{1,j}^2+ \rho_{2,j}^2)=1} \sqrt{\alpha_1^2+\alpha_2^2}\Big(\sum^{\infty}_{j=1} (\rho^2_{1,j} + \rho^2_{2,j})   \eta_j \Big)^{\frac{1}{2}} 
		\leq \sqrt{\alpha_1^2+\alpha_2^2} \sup_{j\in \mathbb{N}^+} \sqrt{\eta_j}.
	\end{align*}
	%	$$
	By letting $$ \rho^2_{1,j} +\rho^2_{2,j}=
	\Big\{ \begin{aligned}
		1, \quad &j=j_0, \\
		0, \quad &j\neq j_0,
	\end{aligned}
	$$
	we conclude $$
	\begin{aligned}
		\limsup_{t \rightarrow \infty} \frac{(\|X(t)\|^2_{\mathbb{R}} +\|Y(t)\|^2_{\mathbb{R}})^{\frac{1}{2}}}{\sqrt{ t \log \log t}}  = \limsup_{t \rightarrow \infty} \frac{\|Z(t)\|_{\mathbb{R}}}{\sqrt{ t \log \log t}} =
		\sqrt{\alpha_1^2+\alpha_2^2} \sup_{j\in \mathbb{N}^+} \sqrt{\eta_j},
	\end{aligned}
	$$
	which finishes the proof.
\end{proof}

In the following proposition, we present
LILs for some martingales by means of the time-change theorem, which plays an important role in the proof of Proposition \ref{LILContinuousGeneral}. Define $M_1(t):= \int_{0}^{t} \cos \left(s B \right)d W(s)$, $M_2(t) := \int_{0}^{t} \sin \left(s B \right)d W(s)$. Then for all $j\in \mathbb{N}^+$, we have $$
\begin{aligned}
	M_{1,j}(t):= \langle M_1(t),e_j\rangle_{\mathbb{R}}=\langle \int_{0}^{t} \cos \left(s B \right)(\sum^\infty_{k=1} \sqrt{\eta_k} d\beta_{k} e_k ),e_j\rangle_{\mathbb{R}}= \sqrt{\eta_j} \int_{0}^{t} \cos \left(s \lambda_j \right) d\beta_{j},  \\
	M_{2,j}(t):= \langle M_2(t),e_j\rangle_{\mathbb{R}}=\langle \int_{0}^{t} \sin \left(s B \right)(\sum^\infty_{k=1} \sqrt{\eta_k} d\beta_{k} e_k ),e_j\rangle_{\mathbb{R}}= \sqrt{\eta_j} \int_{0}^{t} \sin \left(s \lambda_j \right) d\beta_{j}.
\end{aligned}
$$ 
\begin{prop} \label{RoughBoundGeneral}
	For $j \in \mathbb{N}^+, k=1,2$, $M_{k,j}(t)$ is a real-valued  martingale obeying the following LIL:
	\begin{equation} \nonumber %\label{upperBoundWsincosjGeneral}
		\begin{aligned}
			\limsup _{t \rightarrow \infty} \frac{\left|M_{k,j}(t)\right|}{\sqrt{ t \log \log t}} 	= \sqrt{\eta_j} \quad a.s.%, \quad \limsup _{t \rightarrow \infty} \frac{\left|M_{2,j}(t)\right|}{\sqrt{ t \log \log t}} 	= \sqrt{\eta_j} \quad a.s.
		\end{aligned}  
	\end{equation}
\end{prop}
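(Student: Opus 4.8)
The plan is to realize each $M_{k,j}$ as a deterministically time-changed Brownian motion and then transfer the classical LIL for Brownian motion through that time change. First I would observe that, for fixed $j$ and $k$, the integrand $s\mapsto\cos(s\lambda_j)$ (resp. $s\mapsto\sin(s\lambda_j)$) is deterministic and bounded, so $M_{k,j}$ is a square-integrable continuous (real-valued) martingale with $M_{k,j}(0)=0$; only the LIL requires work.

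Next I would compute the quadratic variation explicitly. Since $B$ is positive definite we have $\lambda_j>0$, so, using $\cos^2 x=\tfrac12(1+\cos 2x)$ and $\sin^2 x=\tfrac12(1-\cos 2x)$,
\begin{equation}\nonumber
\langle M_{1,j}\rangle(t)=\eta_j\int_0^t\cos^2(s\lambda_j)\,ds=\frac{\eta_j}{2}\Big(t+\frac{\sin(2\lambda_j t)}{2\lambda_j}\Big),\qquad
\langle M_{2,j}\rangle(t)=\frac{\eta_j}{2}\Big(t-\frac{\sin(2\lambda_j t)}{2\lambda_j}\Big).
\end{equation}
From this I would extract that $A_{k,j}:=\langle M_{k,j}\rangle$ is continuous and strictly increasing from $0$ to $\infty$, and, because the oscillatory correction is bounded, that $A_{k,j}(t)/t\to\eta_j/2$ as $t\to\infty$ (if $\eta_j=0$ then $M_{k,j}\equiv0$ and the statement is trivial, so assume $\eta_j>0$). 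Since $A_{k,j}(\infty)=\infty$ almost surely, the time-change theorem for continuous martingales then yields a standard one-dimensional Brownian motion $\widehat\beta$ with $M_{k,j}(t)=\widehat\beta\big(A_{k,j}(t)\big)$ for all $t\ge0$ a.s. Writing $A_{k,j}^{-1}$ for the continuous strictly increasing inverse of $A_{k,j}$, the linear growth gives $A_{k,j}^{-1}(u)\sim 2u/\eta_j$, whence $A_{k,j}^{-1}(u)\log\log A_{k,j}^{-1}(u)\sim(2u/\eta_j)\log\log u$ as $u\to\infty$.

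Finally I would substitute $u=A_{k,j}(t)$ in the limit superior and apply the classical LIL for Brownian motion, $\limsup_{u\to\infty}|\widehat\beta(u)|/\sqrt{2u\log\log u}=1$ a.s. (see e.g. \cite{schilling2014brownian}):
\begin{equation}\nonumber
\limsup_{t\to\infty}\frac{|M_{k,j}(t)|}{\sqrt{t\log\log t}}
=\limsup_{u\to\infty}\frac{|\widehat\beta(u)|}{\sqrt{A_{k,j}^{-1}(u)\log\log A_{k,j}^{-1}(u)}}
=\sqrt{\eta_j}\,\limsup_{u\to\infty}\frac{|\widehat\beta(u)|}{\sqrt{2u\log\log u}}=\sqrt{\eta_j}\quad a.s.
\end{equation}
The computation is essentially bookkeeping, and the only step that needs genuine care is this last asymptotic matching of the two scaling functions $\sqrt{t\log\log t}$ and $\sqrt{A_{k,j}^{-1}(u)\log\log A_{k,j}^{-1}(u)}$ through the time change: I would justify replacing the latter by $\sqrt{(2u/\eta_j)\log\log u}$ inside the $\limsup$ using that their ratio tends to the positive constant $\sqrt{2/\eta_j}$, together with the elementary fact that $\limsup_n(a_nb_n)=b\,\limsup_n a_n$ when $b_n\to b>0$. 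This linearity of $A_{k,j}(t)$ up to a bounded oscillation is exactly what makes the passage clean; no serious difficulty is anticipated.
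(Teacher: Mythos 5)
Your argument is correct and follows essentially the same route as the paper: compute the deterministic quadratic variation $\langle M_{k,j}\rangle(t)=\tfrac{\eta_j}{2}t\pm\tfrac{\eta_j}{4\lambda_j}\sin(2\lambda_j t)$, invoke the Dambis--Dubins--Schwarz time-change theorem to write $M_{k,j}$ as a Brownian motion evaluated at the bracket, and transfer the classical Brownian LIL using $2\langle M_{k,j}\rangle(t)/t\to\eta_j$. The only cosmetic difference is that you change variables via the inverse time change $A_{k,j}^{-1}$, whereas the paper multiplies and divides by $\sqrt{2\langle M_{k,j}\rangle(t)\log\log\langle M_{k,j}\rangle(t)}$ directly; the content is identical.
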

\begin{proof}
	\begin{comment}
		Using \cite[Theorem 4.27]{da2014stochastic}, it can be proved that $M_1(t)$ and $M_2(t)$ are martingales on $U^0$ with their quadratic variation processes
		$$
		\begin{aligned}
			\langle \langle M_{1} \rangle\rangle(t) &=\int_{0}^{t} \cos ^{2}(sA ) d s \cdot Q=\frac{tQ}{2}+\frac{A^{-1}}{4} \sin (2 tA)Q, \\
			\langle \langle M_{2} \rangle\rangle(t) &=\int_{0}^{t} \sin ^{2}(sA ) d s \cdot Q=\frac{tQ}{2}-\frac{A^{-1}}{4} \sin (2 tA)Q. 
		\end{aligned}
		$$
		Therefore,  $\langle  M_{1} (t), a\rangle_{\mathbb{R}}\langle  M_{1} (t), b\rangle_{\mathbb{R}}-\big\langle\langle\langle  M_{1} (t)\rangle\rangle a, b\big\rangle_{\mathbb{R}}$ is a continuous martingale for any $a,b \in U^0$. Especially, let $a=b=e_{j}$, then $\left\langle  M_{1} (t), e_{j}\right\rangle_{\mathbb{R}}^{2}-\big\langle \langle\langle  M_{1} (t)\rangle\rangle e_{j}, e_{j} \big\rangle_{\mathbb{R}}$ is a continuous martingale for $M_{1,j} (t) := \langle  M_{1} (t), e_j\rangle_{\mathbb{R}}$.
		By the uniqueness of $\left\langle M_{1,j}\right\rangle(t)$, we have
		$$\big\langle M_{1,j}\big\rangle(t)=\big\langle \langle\langle M_1 (t)\rangle\rangle e_{j}, e_{j} \big\rangle_{\mathbb{R}}.  $$
	\end{comment}
	Note that $M_{1,j}(t)$ is a real-valued martingale with 
	$$
	\begin{aligned}
		\langle M_{1,j}\rangle(t) &=\eta_j \int_{0}^{t} \cos ^{2}(s\lambda_j ) d s= \frac{t\eta_j}{2}+\frac{\eta_j}{4\lambda_j} \sin (2 t\lambda_j).
	\end{aligned}
	$$
	It follows from \cite[Theorem 3.4.6]{karatzas2012brownian} that $W_{1,j}(t):=M_{1,j} (T_{1,j}(t))$ is a one-dimensional Brownian motion  with
	$
	M_{1,j}(t)=W_{1,j}\left(\left\langle M_{1,j}\right\rangle(t)\right),
	$
	where $T_{1,j}(t) :=\inf \left\{s \geq 0 :\langle M_{1,j}\rangle(s)>t\right\}$.
	Since $\lim _{t \rightarrow \infty} \frac{2\langle M_{1,j} \rangle(t)}{t}=\eta_j$, we obtain
	%	\begin{equation} 
		\begin{align}
			\limsup _{t \rightarrow \infty} \frac{\left|M_{1,j}(t)\right|}{\sqrt{ t \log \log t}} &= \limsup _{t \rightarrow \infty} \frac{\left|W_{1,j}\left(\left\langle M_{1,j}\right\rangle(t)\right)\right|}{\sqrt{2 \left\langle M_{1,j}\right\rangle(t) \log \log \left(\left\langle M_{1,j}\right\rangle(t)\right)}} \frac{\sqrt{2 \left\langle M_{1,j}\right\rangle(t) \log \log \left(\left\langle M_{1,j}\right\rangle(t)\right)}}{\sqrt{ t \log \log t}} \notag \\ 
			&= \lim_{t \rightarrow \infty} \sqrt{\frac{ 2 \left\langle M_{1,j}\right\rangle(t) }{t}} 
			=\sqrt{\eta_j} \quad a.s., \label{LimsupOfM1jGeneral}
		\end{align}
		%	\end{equation}
	where the LIL for the Brownian motion $W_{1,j}(t)$ is used.
	Similarly, $M_{2,j}(t)$ is a martingale with
	$$
	\langle M_{2,j}\rangle(t) =\eta_j \int_{0}^{t} \sin ^{2}(s\lambda_j ) d s= \frac{t\eta_j}{2}-\frac{\eta_j}{4\lambda_j} \sin (2 t\lambda_j),
	$$
	and it can be proved that 
	$
	\limsup _{t \rightarrow \infty} \frac{\left|M_{2,j}(t)\right|}{\sqrt{ t \log \log t}}
	%:=	\limsup _{t \rightarrow \infty} \frac{\langle  M_{2} (t), e_j\rangle_{\mathbb{R}}}{\sqrt{ t \log \log t}}
	=\sqrt{\eta_j}\; a.s.
	$
	The proof is completed.
\end{proof}
\begin{comment}
	The Borell--TIS inequality plays an important role in the proof of Proposition \ref{LILContinuousGeneral}.
	%Besides Proposition \ref{RoughBoundGeneral}, we also need the Borell--TIS inequality for the proof of Proposition \ref{LILContinuousGeneral}. 
	For the convenience of readers, we present the inequality below; see e.g. \cite[Theorem 2.1.1]{adler2009random} for the proof.  %which is proposed by Borell and Tsirelson, Ibragimov and Sudakov independently.
	\begin{lemma}	\label{Borell-TISinequality}
		Let $X(t)$ be a real-valued centered Gaussian process with
		$$
		\mathbb{P} \Big\{ \sup_{t\in K} X(t) <\infty \Big\} =1,
		$$
		where $K\subset [0,\infty) $.
		Then
		%	$$
		% 	\mathbb{E} \big[\sup_{t \in [0,T]} X(t) \big] <\infty
		% 	$$
		for all $u>0$,
		$$
		\mathbb{P}\Big\{ \big(\sup _{t \in K}  X(t)-\mathbb{E}\big[\sup _{t \in K } X(t)\big] \big)>u\Big\} \leq e^{-\frac{u^2}{ 2 \sigma_K^2}} .
		$$
		where $\sigma_K^2 := \sup _{t \in K} \mathbb{E} \big[ |X(t)|^2 \big]$.
	\end{lemma}
\end{comment}

With Proposition \ref{RoughBoundGeneral} in hand, %and Lemma \ref{Borell-TISinequality} 
we present the proof of Proposition \ref{LILContinuousGeneral} below.
\begin{proof}[Proof of Proposition \ref{LILContinuousGeneral}]
	For the sake of simplicity, we define $W_{\sin}(t) := \int^t_0 \sin ((t-s) B)dW(s)$, $W_{\cos}(t) := \int^t_0 \cos ((t-s) B)dW(s)$, and for $ j\in \mathbb{N}^+$, we define $W_{\sin,j}(t) :=\left\langle W_{\sin }(t), e_{j}\right\rangle_{\mathbb{R}}$, $W_{\cos,j}(t) :=\left\langle W_{\cos }(t), e_{j}\right\rangle_{\mathbb{R}}$.
	Using the Riesz representation theorem, we have
	\begin{equation} \nonumber
		\| \tilde{X}(t) \|^2_{\mathbb{R}} =\sup_{\rho\in\mathbb H,\|\rho\|_{\mathbb{R}}=1} |\langle \tilde{X}(t),\rho \rangle_{\mathbb{R}} |^2 .
	\end{equation}
	Hence, for all $\rho = \sum^\infty_{j=1} \left(\rho_{1,j} +\mathbf{i} \rho_{2,j}\right) e_j \in \mathbb{H}$  with $\rho_{1,j},\rho_{2,j} \in \mathbb{R}$ such that
	$
	\|\rho\|^2_{\mathbb{R}}= \sum^\infty_{j=1} (\rho_{1,j}^2 +\rho_{2,j}^2 ) = 1,
	$
	we obtain
	%	\begin{equation} \nonumber
		\begin{align*}
			&\quad \|\tilde{X}(t)\|^2_{\mathbb{R}} = \sup_{\rho\in\mathbb H,\|\rho\|_{\mathbb{R}}=1} |\langle \cos(Bt)X_0+\sin(Bt)Y_0,\rho \rangle_{\mathbb{R}} + \langle \mathfrak{a}_1 W_{\cos}(t)+\mathfrak{a}_2 W_{\sin}(t),\rho \rangle_{\mathbb{R}} |^2\\
			%			&= \sup_{\sum^\infty_{j=1} \left(\rho_{1,j}^2 +\rho_{2,j}^2 \right) = 1}\Big|\langle \cos(Bt)X_0+\sin(Bt)Y_0,\rho \rangle_{\mathbb{R}} + \sum^\infty_{j=1}(\Re\mathfrak{a}_2 \rho_{1,j} \langle  W_{\sin}(t),e_j \rangle_{\mathbb{R}}\\
			%			&\quad+ \Im\mathfrak{a}_2 \rho_{2,j} \langle  W_{\sin}(t),e_j \rangle_{\mathbb{R}} 
			%		+\Re\mathfrak{a}_1\rho_{1,j}\langle W_{\cos}(t),e_j\rangle_{\mathbb{R}}+ \Im\mathfrak{a}_1\rho_{2,j}\langle W_{\cos}(t),e_j\rangle_{\mathbb{R}}) \Big|^2 \\
			&= \sup_{\sum^\infty_{j=1} \left(\rho_{1,j}^2 +\rho_{2,j}^2 \right) = 1} \Big|\langle \cos(Bt)X_0+\sin(Bt)Y_0,\rho \rangle_{\mathbb{R}} + \sum^\infty_{j=1} \tilde{\mathfrak{a}}_{1,\rho,j} W_{\cos,j}(t) + \tilde{\mathfrak{a}}_{2,\rho,j} W_{\sin,j}(t)\Big|^2,
		\end{align*}
		%	\end{equation}
	where $\tilde{\mathfrak{a}}_{1,\rho,j}:=\Re\mathfrak{a}_1 \rho_{1,j}+ \Im\mathfrak{a}_1 \rho_{2,j}, \tilde{\mathfrak{a}}_{2,\rho,j}:=\Re\mathfrak{a}_2 \rho_{1,j}+ \Im\mathfrak{a}_2 \rho_{2,j}$.  
	For each sequence $\left\{(\rho_{1,j},\rho_{2,j})\right\}^\infty_{j=1}$ such that $\sum^\infty_{j=1} (\rho_{1,j}^2 +\rho_{2,j}^2 ) = 1$, let $			W_{\sin,\cos,\rho}(t):= \sum^\infty_{j=1} \tilde{\mathfrak{a}}_{1,\rho,j} W_{\cos,j}(t) + \tilde{\mathfrak{a}}_{2,\rho,j} W_{\sin,j}(t).
	$ 
	%and denote $\phi(\rho) :=\sum^{\infty}_{j=1} \left( \tilde{\mathfrak{a}}_{2,\rho,j} ^2+\tilde{\mathfrak{a}}_{1,\rho,j}  ^2\right)\eta_j $. \par
	Then we divide the proof into two steps. \par
	\textit{Step 1: Lower bound of $\limsup_{t \rightarrow \infty} \frac{\|\tilde{X}(t)\|_{\mathbb{R}}}{\sqrt{ t \log \log t}}$}.
	\par
	Let $ m>2$, and 
	$t_{n}:=m^{n}$ for  $n\in\mathbb N$. We first give the estimate of $W_{\sin,\cos,\rho}(t_n)$, which can be decomposed as $$W_{\sin,\cos,\rho}\left(t_{n}\right)=A_{n,\rho}+B_{n,\rho}.$$
	Here,
	$$A_{n,\rho} := \sum^\infty_{j=1} \big\langle \int_{t_{n-1}}^{t_{n}} ( \tilde{\mathfrak{a}}_{1,\rho,j}  \left( \cos \left(t_{n}-s\right)B\right)+ \tilde{\mathfrak{a}}_{2,\rho,j}  \left( \sin \left(t_{n}-s\right)B\right) )d W(s),e_j \big\rangle_{\mathbb{R}} $$
	and
	$$B_{n,\rho}:= \sum^\infty_{j=1} \big\langle \int_{0}^{t_{n-1}} ( \tilde{\mathfrak{a}}_{1,\rho,j}  \left( \cos \left(t_{n}-s\right)B\right)+ \tilde{\mathfrak{a}}_{2,\rho,j}  \left( \sin \left(t_{n}-s\right)B\right) )d W(s),e_j \big\rangle_{\mathbb{R}} $$
	are independent Gaussian random variables for any given $n\in\mathbb{N}^+$.
	It is proved that $\{A_{n,\rho}\}_{n\in\mathbb{N}^+} $ is a martingale difference series with $\mathbb{E}\left[ A_{n,\rho}\right]=0$ and
	\begin{equation} \nonumber
		\begin{aligned}
			\operatorname{Var}\left(A_{n,\rho} \right) &= \sum^\infty_{j=1} \big\langle(\tilde{\mathfrak{a}}_{1,\rho,j} ^2 +\tilde{\mathfrak{a}}_{2,\rho,j} ^2) \frac{\Delta t_n Q}{2} e_j,e_j \big\rangle_{\mathbb{R}}+ \mathcal{J}(\Delta t_n,\rho) = \frac{\Delta t_n \phi(\rho)}{2} +\mathcal{J}(\Delta t_n,\rho),
		\end{aligned}
	\end{equation}
	where $\Delta t_{n} :=t_{n}-t_{n-1}$ %=m^n \left(1-\frac{1}{m}\right)$
	and  $$
	\begin{aligned}
		\mathcal{J}(\Delta t_n,\rho) &:=\sum^\infty_{j=1}\big((\tilde{\mathfrak{a}}_{1,\rho,j} ^2 -\tilde{\mathfrak{a}}_{2,\rho,j} ^2)\frac{\sin (2\lambda_j\Delta t_n)\eta_j}{4\lambda_j} 
		+\tilde{\mathfrak{a}}_{1,\rho,j} \tilde{\mathfrak{a}}_{2,\rho,j} (\frac{ \eta_j}{2\lambda_j} -\frac{ \cos(2\lambda_j \Delta t_n )\eta_j}{2\lambda_j})\big) .
	\end{aligned}
	$$
	Since $\{\lambda_j\}_{j=1}^\infty$ is non-decreasing, we know that
	%	$$
	\begin{align*}
		|\mathcal{J}(\Delta t_n,\rho)| 
		%	&\leq  \sum^\infty_{j=1} \big|\big((\tilde{\mathfrak{a}}_{1,\rho,j} ^2 -\tilde{\mathfrak{a}}_{2,\rho,j} ^2)\frac{\sin (2\lambda_j\Delta t_n)\eta_j}{4\lambda_j} 
		%	+\tilde{\mathfrak{a}}_{1,\rho,j} \tilde{\mathfrak{a}}_{2,\rho,j} (\frac{ \eta_j}{2\lambda_j} -\frac{ \cos(2\lambda_j \Delta t_n )\eta_j}{2\lambda_j})\big) \big|\\
		&\leq  \sum^\infty_{j=1} \big( |\tilde{\mathfrak{a}}_{1,\rho,j} ^2 -\tilde{\mathfrak{a}}_{2,\rho,j} ^2| \frac{\eta_j}{4\lambda_j} 
		+|\tilde{\mathfrak{a}}_{1,\rho,j} \tilde{\mathfrak{a}}_{2,\rho,j}| \frac{ \eta_j}{\lambda_j} \big) 
		\leq \sum^\infty_{j=1} \big(\frac{|\tilde{\mathfrak{a}}_{1,\rho,j} ^2-\tilde{\mathfrak{a}}_{2,\rho,j} ^2|}{4}+|\tilde{\mathfrak{a}}_{1,\rho,j} \tilde{\mathfrak{a}}_{2,\rho,j}|\big) \frac{ \eta_j}{\lambda_1}.
	\end{align*}
	%	$$
	Letting $\mathfrak{a}_{\max} := \max \{ |\Re \mathfrak{a}_1|, |\Im \mathfrak{a}_1|, |\Re \mathfrak{a}_2|, |\Im \mathfrak{a}_2| \}$, it holds that
	$$
	\begin{aligned}
		|\tilde{\mathfrak{a}}_{1,\rho,j} ^2-\tilde{\mathfrak{a}}_{2,\rho,j} ^2| \leq 2\mathfrak{a}^2_{\max} (\rho_{1,j}^2+\rho_{2,j}^2) \leq 2\mathfrak{a}^2_{\max},    \quad
		|\tilde{\mathfrak{a}}_{1,\rho,j} \tilde{\mathfrak{a}}_{2,\rho,j}|  \leq \mathfrak{a}^2_{\max} (|\rho_{1,j}|+|\rho_{2,j}|)^2 \leq 2\mathfrak{a}^2_{\max},
	\end{aligned}
	$$
	which leads to 
	\begin{equation} \label{givenJ0}
		\begin{aligned}
			|\mathcal{J}(\Delta t_n,\rho)|
			&\leq \frac{5\mathfrak{a}^2_{\max}}{2\lambda_{1}}  \operatorname{tr}(Q) 
			=: \mathcal{J}_0 < \infty.
		\end{aligned}
	\end{equation}
	Thus, we have
	$
	%	\begin{aligned}
		%\frac{\Delta t_n}{2} \phi(\rho) -\mathcal{J}_0(\alpha,\lambda) \leq 
		\operatorname{Var}\left(A_{n,\rho} \right) \leq \frac{\Delta t_n}{2} \phi(\rho) +\mathcal{J}_0.
		%	\end{aligned}
	$
	Define 
	$C_{n,\rho}:=\big(\frac{\Delta t_{n}}{\beta}\phi(\rho) \log \log t_{n}\big)^{\frac{1}{2}} %=\left(\frac{2}{\beta}\left(\sum^{\infty}_{j=1} \left(\rho_{1,j}+\rho_{2,j}\right)\eta_j\right) \left(1-\frac{1}{m}\right) t_{n} \log \log t_{n}\right)^{\frac{1}{2}}
	$
	with $n\geq 2,\beta\in(1,2]$. Based on the fact that $A_{n,\rho}$ is  Gaussian, we derive 
	%	\begin{equation}
		\begin{align*}
			\mathbb{P}\big\{A_{n,\rho}>C_{n,\rho}\big\} &=\mathbb{P}\big\{\frac{A_{n,\rho}}{\sqrt{\operatorname{Var}(A_{n,\rho})}}>\frac{C_{n,\rho}}{\sqrt{\operatorname{Var}(A_{n,\rho})}}\big\} \\
			&\geq \frac{1}{\sqrt{2 \pi}}\frac{1}{\frac{C_{n,\rho}}{\sqrt{\operatorname{Var}(A_{n,\rho})}}+\frac{\sqrt{\operatorname{Var}\left(A_{n,\rho}\right)}}{C_{n,\rho}}} \exp\big\{-\frac{C^2_{n,\rho}}{2 \operatorname{Var}(A_{n,\rho})}\big\}.
		\end{align*}
		%	\end{equation}
	For any $\epsilon_1 \in(0,\frac{3}{4}]$, there exists $N_0 := N_0(\epsilon_1,m,\rho) =\max\Big\{\big\lceil \frac{\log (\frac{4\mathcal{J}_0}{\phi \epsilon_1})}{\log m}\big\rceil, \big\lceil \frac{e^{2\beta}}{\log m} \big\rceil,  2 \Big\}$ such that for all $n > N_0$, it holds that
	%		\begin{equation}
		\begin{align*}
			\frac{C_{n,\rho}}{\sqrt{\operatorname{Var}(A_{n,\rho})}} &=\sqrt{\big(\frac{1}{1+\frac{2\mathcal{J}}{\phi \Delta t_n}} \big) \frac{2}{\beta}\log\log t_n }
			\leq\sqrt{\big(\frac{1}{1-\frac{2\mathcal{J}_0}{\phi \Delta t_n}} \big) \frac{2}{\beta}\log\log t_n } \\
			&\leq \sqrt{\big(\frac{1}{1-\epsilon_1} \big) \frac{2}{\beta}\log\log t_n } 
			\leq 2 \sqrt{\frac{2}{\beta}\log\log t_n}
		\end{align*}
		%	\end{equation}
	and
	\begin{equation} \nonumber
		\begin{aligned}
			\frac{\sqrt{\operatorname{Var}(A_{n,\rho})}}{C_{n,\rho}} 
			\leq \sqrt{\frac{1+\frac{2\mathcal{J}}{\phi \Delta t_n}}{ \frac{2}{\beta}\log\log t_n}}
			\leq \frac{1+\frac{2\mathcal{J}_0}{\phi \Delta t_n}}{ \sqrt{\frac{2}{\beta}\log\log t_n}} \leq \frac{1}{2}+ \frac{\mathcal{J}_0}{\phi\Delta t_n} \leq 1 \leq \sqrt{\frac{2}{\beta}\log\log t_n}. %\quad \text{\ for\quad} \log\left(n\log \eta\right) \geq 1\geq \frac{\beta}{2}.
		\end{aligned}
	\end{equation}
	Hence,
	%	\begin{equation}
		\begin{align*}
			\mathbb{P}\big\{A_{n,\rho}>C_{n,\rho}\big\}
			&\geq \frac{1}{\sqrt{2 \pi}}\frac{1}{\frac{C_{n,\rho}}{\sqrt{\operatorname{Var}(A_{n,\rho})}}+\frac{\sqrt{\operatorname{Var}\left(A_{n,\rho}\right)}}{C_{n,\rho}}} \exp\Big\{-\frac{\log \log t_n }{\beta(1-\epsilon_1)}\Big\} \\
			%		&\geq \frac{1}{\sqrt{2 \pi}} \frac{1}{3\sqrt{\frac{2}{\beta} \log \log t_{n}}} \exp\Big\{-\frac{\log \left( n \log m \right)}{\beta(1-\epsilon_1)}\Big\} \\
			&\geq \frac{1}{\sqrt{2 \pi}} \frac{1}{3\sqrt{\frac{2}{\beta} \left(\log n + \log \log m\right)}} \left(\log m \right)^{-\frac{1}{\beta(1-\epsilon_1)}} n^{-\frac{1}{\beta(1-\epsilon_1)}}.
		\end{align*}
		%	\end{equation}
	Taking $\epsilon_1 =\frac{1-1/\beta}{2} \leq \frac{1}{4}$, which implies $\beta\left(1-\epsilon_1\right) = \frac{\beta+1}{2}>1$,  we have $\sum_{n=N_0}^{\infty} n^{-\frac{1}{\beta\left(1-\epsilon_1\right)}}=\infty$. This  leads to
	\begin{equation}\nonumber % \label{SumBorelCantelliBnjGeneral}
		\sum_{n=2}^{\infty} \mathbb{P}\big\{A_{n,\rho}>C_{n,\rho}\big\}=\Big(\sum_{n=2}^{N_0}+\sum_{n=N_0+1}^{\infty}\Big) \mathbb{P}\big\{A_{n,\rho}>C_{n,\rho}\big\} = \infty .
	\end{equation}
	Noticing that  $\left\{A_{n,\rho}\right\}_{n=2}^{\infty}$ is a sequence of independent random variables, it follows from the Borel--Cantelli lemma that
	\begin{equation} \label{BCLemmaResultAnjGeneral}
		\mathbb{P}\Big\{  \big\{A_{n,\rho}>C_{n,\rho}\big\} \quad i.o.\Big\}=1 .
	\end{equation}
	By Proposition \ref{RoughBoundGeneral} and the Cauchy--Schwarz inequality, we obtain
	\begin{equation} \nonumber%\label{LimsupOfBnGeneral}
		\begin{aligned}
			&\quad \limsup _{n \rightarrow \infty} \frac{\left|B_{n,\rho}\right|}{\sqrt{ t_{n-1} \log \log t_{n-1}}} \\
			&=\limsup _{n \rightarrow \infty}  \sum^{\infty}_{j=1} \frac{1}{\sqrt{ t_{n-1} \log \log t_{n-1}}}|\big\langle \big(\tilde{\mathfrak{a}}_{1,\rho,j} \cos(t_n B) + \tilde{\mathfrak{a}}_{2,\rho,j}  \sin (t_{n}B)\big) M_1(t_{n-1})  \\ 
			&\quad +\big( \tilde{\mathfrak{a}}_{1,\rho,j}  \sin (t_{n}B) -\tilde{\mathfrak{a}}_{2,\rho,j} \cos(t_n B)\big) M_2(t_{n-1}),e_j\big\rangle_{\mathbb{R}}| \\
			%		& \leq \sum^{\infty}_{j=1}\Big( \limsup _{n \rightarrow \infty} \frac{ \left(\left|\tilde{\mathfrak{a}}_{2,\rho,j}  \right|+\left|\tilde{\mathfrak{a}}_{1,\rho,j}  \right|\right) \left|M_{1,j}(t_{n-1})\right|}{\sqrt{t_{n-1} \log \log t_{n-1}}} +	\limsup _{n \rightarrow \infty} \frac{ \left(\left|\tilde{\mathfrak{a}}_{2,\rho,j}  \right|+\left|\tilde{\mathfrak{a}}_{1,\rho,j}  \right|\right) \left|M_{2,j}(t_{n-1})\right|}{\sqrt{ t_{n-1} \log \log t_{n-1}}} \Big) \\
			&\leq  \sum^{\infty}_{j=1} 2\sqrt{\eta_j} \left(\left|\tilde{\mathfrak{a}}_{1,\rho,j}  \right|+\left|\tilde{\mathfrak{a}}_{2,\rho,j}  \right|\right)  %\quad a.s.  \\
			\leq 2 (\sum^{\infty}_{j=1} \left(\left|\tilde{\mathfrak{a}}_{1,\rho,j}  \right|+\left|\tilde{\mathfrak{a}}_{2,\rho,j}  \right|\right)^2 )^\frac{1}{2} (\sum^{\infty}_{j=1} \eta_j)^{\frac{1}{2}} 
			\leq 4\sqrt{2} \mathfrak{a}_{\max} (\operatorname{tr}(Q))^{\frac{1}{2}} \quad a.s.,
		\end{aligned}
	\end{equation}
	which implies
	\begin{equation}\label{LiminfOfBnGeneral}
		\begin{aligned}
			\liminf _{n \rightarrow \infty} \frac{B_{n,\rho}}{\sqrt{ t_{n-1} \log \log t_{n-1}}} 
			&\geq - 4\sqrt{2} \mathfrak{a}_{\max} (\operatorname{tr}(Q))^{\frac{1}{2}}  \quad a.s.  
		\end{aligned}
	\end{equation}
	According to the decomposition of $W_{\sin,\cos,\rho}(t)$, \eqref{BCLemmaResultAnjGeneral}, and \eqref{LiminfOfBnGeneral}, there exists a set $\tilde{\Omega} \subset \Omega$ with $\mathbb{P}(\tilde{\Omega})=1$ such that for any $\omega \in \tilde{\Omega}$ and $\varepsilon_1>0$, 
	\begin{equation} \label{WsincosInfiniteOftenGeneral}
		\begin{aligned} 
			W_{\sin,\cos,\rho}(t_n) 
			&> C_{n,\rho}+B_{n,\rho} \\
			&\geq \sqrt{ t_n \log \log t_n} 
			\big( \sqrt{\frac{1-\frac{1}{m}}{\beta} \phi(\rho)} 
			-(4\sqrt{2} \mathfrak{a}_{\max} (\operatorname{tr}(Q))^{\frac{1}{2}}+\varepsilon_1) \frac{\sqrt{ t_{n-1} \log \log t_{n-1}}}{\sqrt{ t_n \log \log t_n}}
			\big)
		\end{aligned}  
	\end{equation}
	holds for infinitely many $n> 1$.
	Noticing 
	$
	\lim _{n \rightarrow \infty} \frac{\sqrt{ t_{n-1} \log \log t_{n-1}}}{\sqrt{ t_{n} \log \log t_{n}}}=\sqrt{\frac{1}{m}} 
	$ 
	with $m>2$, and then letting $m\rightarrow \infty,\beta \rightarrow 1$, we deduce from \eqref{WsincosInfiniteOftenGeneral} that
	\begin{equation} \nonumber
		\begin{aligned}
			\limsup _{t \rightarrow \infty} \frac{\left|W_{\sin,\cos,\rho}(t)\right|}{\sqrt{ t \log \log t}} &\geq 	\limsup _{n \rightarrow \infty} \frac{ W_{\sin,\cos,\rho}(t_n) }{\sqrt{ t_n \log \log t_n}} 
			&\geq \sqrt{\phi(\rho)} \quad a.s. 
		\end{aligned}  
	\end{equation}
	Consequently,
	%	\begin{equation} 
		\begin{align*}
			\limsup _{t \rightarrow \infty} \frac{\|\tilde{X}(t)\|_{\mathbb{R}}}{\sqrt{ t \log \log t}} 
			&=  \limsup _{t \rightarrow \infty} \sup_{\sum^\infty_{j=1}  (\rho_{1,j}^2+ \rho_{2,j}^2)=1} \frac{\left|\langle \cos(Bt)X_0+\sin(Bt)Y_0,x \rangle_{\mathbb{R}}+  W_{\sin,\cos,\rho}(t) \right|}{\sqrt{ t \log \log t}} \\
			&\geq \limsup _{t \rightarrow \infty}  \frac{\left|W_{\sin,\cos,\rho}(t)\right|}{\sqrt{ t \log \log t}} 
			\geq  \sqrt{\phi(\rho)} \quad a.s. 
		\end{align*}  
		%	\end{equation}
	%where $ \sum^{\infty}_{j=1} (\rho_{1,j}^2+\rho_{2,j}^2) =1$. 
	Since $\limsup _{t \rightarrow \infty} \frac{\|\tilde{X}(t)\|_{\mathbb{R}}}{\sqrt{t \log \log t}} $ is independent of $\rho$, taking supremum over all $\{(\rho_{1,j},\rho_{2,j})\}_{j=1}^\infty$ with $\sum^\infty_{j=1}\left(\rho_{1,j}^2 +\rho_{2,j}^2 \right) = 1$ on the right-hand side of the above inequality, we have the lower bound result
	\begin{equation} \label{XtsupGeneral}
		\begin{aligned}
			\limsup _{t \rightarrow \infty} \frac{\|\tilde{X}(t)\|_{\mathbb{R}}}{\sqrt{t \log \log t}} 
			&\geq \sup_{\sum^\infty_{j=1}  (\rho_{1,j}^2+ \rho_{2,j}^2)=1} \sqrt{\phi(\rho)} \quad a.s. 
		\end{aligned}  
	\end{equation}\par
	\textit{Step 2: Upper bound of $\limsup_{t \rightarrow \infty}\frac{\|\tilde{X}(t)\|_{\mathbb{R}}}{\sqrt{t \log \log t}}$}. \par
	Let $ m \in (1,2]$, and 
	$t_{n}:=m^{n}$ for $n\in\mathbb N$.  Define $f(t,\rho) := \sqrt{m^2 \phi(\rho) t \log \log t}.$
	According to the Burkholder--Davis--Gundy  inequality and the Cauchy--Schwarz inequality, for all $ n >N_1 := N_1(m) =  \max\{\lceil \frac{\log(\frac{1}{2\lambda_{1}})}{\log m}-1\rceil,0\}$, we have 
	%	\begin{equation}
		\begin{align*}
			\mathbb{E}\big[\sup_{t\in [0,t_{n+1}]}  W_{\sin,\cos,\rho}(t) \big] 
			%			&= \mathbb{E}\Big[\sup_{t\in [0,t_n]}  \sum^{\infty}_{j=1} \big\langle \Big( \big(\tilde{\mathfrak{a}}_{2,\rho,j} \sin(tA)+\tilde{\mathfrak{a}}_{1,\rho,j} \cos(tA) \big) M_1(t) + \big(-\tilde{\mathfrak{a}}_{2,\rho,j} \cos(tA)+\tilde{\mathfrak{a}}_{1,\rho,j} \sin(tA) \big) M_2(t) \Big) e_j,e_j \big\rangle  \Big] \\
			%		&\leq  \mathbb{E}\big[\sup_{t\in [0,t_n]} \big(\sum^{\infty}_{j=1}(\left| \tilde{\mathfrak{a}}_{1,\rho,j} \right|+\left|\tilde{\mathfrak{a}}_{2,\rho,j} \right|)\left| M_{1,j}(t)\right| +(\left| \tilde{\mathfrak{a}}_{1,\rho,j} \right|+\left|\tilde{\mathfrak{a}}_{2,\rho,j} \right|)\left| M_{2,j}(t)\right|\big) \big] \\
			&\leq  \sum^{\infty}_{j=1} \big(\left| \tilde{\mathfrak{a}}_{1,\rho,j} \right|+\left|\tilde{\mathfrak{a}}_{2,\rho,j} \right|\big) \big( \mathbb{E}\big[\sup_{t\in [0,t_{n+1}]} \left| M_{1,j}(t)\right|\big] + \mathbb{E}\big[\sup_{t\in [0,t_{n+1}]}  \left|M_{2,j}(t)\right| \big] \big)\\
			&\leq \sum^{\infty}_{j=1} C \big(\left| \tilde{\mathfrak{a}}_{1,\rho,j} \right|+\left|\tilde{\mathfrak{a}}_{2,\rho,j} \right|\big) \big( \mathbb{E}\big((\langle M_{1,j}\rangle(t_{n+1}))^{\frac{1}{2}}\big) + \mathbb{E}\big((\langle M_{2,j}\rangle(t_{n+1}))^{\frac{1}{2}}\big) \big) \\
			&\leq \sum^{\infty}_{j=1} 2C \big(\left| \tilde{\mathfrak{a}}_{1,\rho,j} \right|+\left|\tilde{\mathfrak{a}}_{2,\rho,j} \right|\big) \big(\frac{t_{n+1} \eta_j}{2}+\frac{\eta_j}{4\lambda_j}\big)^{\frac{1}{2}} \\
			&\leq \sum^{\infty}_{j=1} 2C \big(\left| \tilde{\mathfrak{a}}_{1,\rho,j} \right|+\left|\tilde{\mathfrak{a}}_{2,\rho,j} \right|\big)  \big(t_{n+1} \eta_j\big)^{\frac{1}{2}} 
			\leq 4\sqrt{2}C \mathfrak{a}_{\max} \sqrt{\operatorname{tr}(Q) t_{n+1}}.
		\end{align*}
		%	\end{equation}
	%In the last inequality, $\left| \rho_{1,j}\right|\leq 1, \left| \tilde{\mathfrak{a}}_{1,\rho,j} \right|\leq 1$ is used.
	%Here, $ \frac{|\langle A^{-1}Q e_j, e_j\rangle_{\mathbb{R}}|}{4}= \frac{\eta_j}{2}\frac{|\langle A^{-1}e_j, e_j\rangle_{\mathbb{R}}|}{2}\leq \frac{t_n \eta_j}{2}$. 
	Hence, there exists %$N_1:=N_1(m,\rho)= \max \Big\{\Big\lceil \sup_{j\in \mathbb{N}^+} \frac{\log(\frac{|\langle A^{-1}Q e_j, e_j\rangle_{\mathbb{R}}|}{2\eta_j})}{\log m}\Big\rceil,\Big\lceil \frac{e^{\frac{4C^2 \operatorname{tr}^2 (Q^{\frac{1}{2}})}{m\phi}}}{\log m}+1 \Big \rceil \Big\}$ such that for all $n>N_1$,
	$N_2:=N_2(m,\rho)=\max\Big\{\Big\lceil \frac{e^{\frac{C_1^2 \operatorname{tr} (Q)}{m\phi}}}{\log m} \Big \rceil , N_1\Big\}$ such that for all $n>N_2$,
	\begin{equation} \label{N2estimateGeneral}
		\mathbb{E}\big[\sup_{t\in [0,t_n]}  W_{\sin,\cos,\rho}(t) \big] \leq C_1 \sqrt{\operatorname{tr}(Q)}\sqrt{t_{n+1}} \leq  f(t_{n},\rho),
	\end{equation}
	where $C_1:=4\sqrt{2}C \mathfrak{a}_{\max} $.
	Therefore, for all $n>N_2$,  we obtain 
	%	\begin{equation}
		\begin{align*}
			&\quad \big( f(t_{n},\rho) - \mathbb{E}\big[\sup_{t\in [0,t_{n+1}]}  W_{\sin,\cos,\rho}(t) \big] \big)^2 \\
			%		&= \Big( \sqrt{m^2\phi(\rho)t_{n-1} \log \log t_{n-1}} - \mathbb{E}\big[\sup_{t\in [0,t_n]}  W_{\sin,\cos,\rho}(t) \big] \Big)^2 \\
			%		&\geq \big( \sqrt{m^2 \phi(\rho) t_{n-1} \log \log t_{n-1}} -  2C_1\sqrt{\operatorname{tr}(Q)}\sqrt{t_n} \big)^2  \\
			%	&= m^2 \phi(\rho) t_{n-1} \log \log t_{n-1} + 16C^2 m \operatorname{tr}^2(Q^{\frac{1}{2}})t_{n-1} -8C \sqrt{m^3 \phi(\rho)\operatorname{tr}(Q^{\frac{1}{2}}) (t_{n-1})^2 \log \log t_{n-1}} \\
			&= m^2 t_{n}\phi(\rho) \log \log t_{n} \Big(1+\frac{C_1^2\frac{\operatorname{tr}(Q)}{\phi(\rho) } -2C_1 \sqrt{m \frac{\operatorname{tr}(Q)}{\phi(\rho)}\log \log t_{n}}}{m\log \log t_{n} }\Big) .
		\end{align*}
		%	\end{equation}
	For any given $\epsilon_2>0$, there exists $N_3:= N_3(\epsilon_2,m,\rho)= \max \Big\{\Big\lceil \frac{e^{(\frac{4C_1^2\operatorname{tr}(Q)}{\epsilon_2^2 m\phi})}}{\log m} \Big\rceil ,N_2\Big\}$ such that for all $n>N_3$, it holds that
	$
	%	\begin{aligned}
		%	1+\frac{4C^2\frac{\operatorname{tr}^2(Q^{\frac{1}{2}})}{\phi(\rho) } -4C \sqrt{m \frac{\operatorname{tr}(Q^{\frac{1}{2}})}{\phi(\rho)}\log \log t_{n-1}}}{m\log \log t_{n-1} } \geq 
		\frac{2C_1 \sqrt{m \frac{\operatorname{tr}(Q)}{\phi(\rho)}\log \log t_{n}}}{m\log \log t_{n} }  \leq \epsilon_2.
		%	\end{aligned}
	$
	This implies that for all $n>N_3$,
	\begin{equation} \label{ft-EsupGeqGeneral}
		\begin{aligned}
			\big( f(t_{n},\rho) - \mathbb{E}\big[\sup_{t\in [0,t_{n+1}]} W_{\sin,\cos,\rho}(t) \big] \big)^2 
			%	&\geq m^2 \phi(\rho) t_{n-1} \log \log t_{n-1}  \Big(1- \frac{4C \sqrt{m \frac{\operatorname{tr}(Q^{\frac{1}{2}})}{\phi(\rho)}\log \log t_{n-1}}}{m\log \log t_{n-1} } \Big) \\
			&\geq \left(1-\epsilon_2 \right) m^2 \phi(\rho) t_{n} \log \log t_{n}.
		\end{aligned}
	\end{equation}
	Notice that
	%	\begin{equation}
		\begin{align*}
			\sigma^2_{t_{n+1},\rho} &:= \sup_{t\in [0,t_{n+1}]} \mathbb{E} \big[W_{\sin,\cos,\rho}(t)\big]^2 
			= \sup_{t\in [0,t_{n+1}]} \Big\{ \sum^\infty_{j=1} \big\langle(\tilde{\mathfrak{a}}_{1,\rho,j}^2+\tilde{\mathfrak{a}}_{2,\rho,j}^2) \frac{tQ}{2} e_j,e_j \big\rangle_{\mathbb{R}}+\mathcal{J}(t,\rho)\Big\}\\
			&\leq \frac{t_{n+1}}{2}\phi(\rho) +\mathcal{J}_0,
		\end{align*}
		%	\end{equation}
	where $\mathcal{J}_0$ is given in \eqref{givenJ0}. 
	Then for  $\epsilon_2$, there exists $N_4 := N_4(\epsilon_2,m,\rho)= \max\Big\{\Big \lceil \frac{\log(\frac{2\mathcal{J}_0}{\epsilon_2 \phi})}{\log m}-1 \Big \rceil,0 \Big\}$ such that for all $n>N_4$,
	\begin{equation} \label{sigmatnboundGeneral}
		2 \sigma^2_{t_{n+1},\rho} \leq m\phi(\rho)  t_{n}(1+\epsilon_2).
	\end{equation}
	Applying the Borell--TIS inequality (see  \cite[Theorem 2.1.1]{adler2009random}) to $ W_{\sin,\cos,\rho}(t)$ with $t \in [0,t_{n+1}]$, and combining (\ref{ft-EsupGeqGeneral}) and  (\ref{sigmatnboundGeneral}), we have that for all $ n> N_5 := \max\{N_{3}, N_{4}\}$,
	%	\begin{equation}
		\begin{align*}
			&\quad \; \mathbb{P}\Big\{\sup_{t\in [0,t_{n+1}]}  W_{\sin,\cos,\rho}(t) \geq f(t_{n},\rho) \Big\} \\
			&= \mathbb{P}\Big\{\sup_{t\in [0,t_{n+1}]}  W_{\sin,\cos,\rho}(t) - \mathbb{E}\big[\sup_{t\in [0,t_{n+1}]}  W_{\sin,\cos,\rho}(t) \big]  \geq f(t_{n},\rho)-\mathbb{E}\big[\sup_{t\in [0,t_{n+1}]}  W_{\sin,\cos,\rho}(t) \big] \Big\}  \\
			&\leq \exp\Big\{ - \frac{\big(f(t_{n},\rho)-\mathbb{E}\big[\sup_{t\in [0,t_{n+1}]}  W_{\sin,\cos,\rho}(t) \big]\big)^2}{2\sigma^2_{t_{n+1},\rho} }\Big\} \\
			&\leq \exp\Big\{ - \frac{\left(1-\epsilon_2 \right)m^2 \phi(\rho)t_{n} \log \log t_{n}  }{(1+\epsilon_2)m \phi(\rho) t_{n}}\Big\} 
			%				&= \big( (n-1) \log m\big)^{ -m\left(\frac{1-\epsilon}{1+\epsilon} \right)} \\
			= \left(\log m\right)^{ -m\left(\frac{1-\epsilon_2}{1+\epsilon_2} \right)} n^{ -m\left(\frac{1-\epsilon_2}{1+\epsilon_2} \right)}.
		\end{align*}
		%	\end{equation}
	Taking $\epsilon_2<\frac{1-1/m}{1+1/m}$, which implies $ m\big(\frac{1-\epsilon_2}{1+\epsilon_2} \big)>1 $,  we have $\sum_{n=1}^{\infty} n^{-m\big(\frac{1-\epsilon_2}{1+\epsilon_2} \big)}<\infty$. This leads to
	%	\begin{equation}
		\begin{align*}
			&\sum^\infty_{n=1} \mathbb{P} \Big\{ \sup_{t\in [0,t_{n+1}]} W_{\sin,\cos,\rho}(t) \geq f(t_{n},\rho) \Big\} \\
			=&  \Big(\sum^{N_5}_{n=1}+ \sum^\infty_{n=N_5+1} \Big) \mathbb{P} \Big\{ \sup_{t\in [0,t_{n+1}]}  W_{\sin,\cos,\rho}(t) \geq f(t_{n},\rho) \Big\} <\infty.
		\end{align*}
		%	\end{equation}
	Using the Borel--Cantelli lemma yields
	$
	%			\begin{align*}
		\mathbb{P} \Big\{ \big\{\sup_{t\in [0,t_{n+1}]}  W_{\sin,\cos,\rho}(t) \geq f(t_{n},\rho)\big\} \quad  i.o. \Big\} =0,
		%			\end{align*}
	$
	which implies
	%	\begin{equation}
		$
		\limsup_{n \rightarrow \infty} \frac{\sup_{t\in [0,t_{n+1}]}  W_{\sin,\cos,\rho}(t)}{\sqrt{m^2 \phi(\rho) t_{n} \log \log t_{n}}} \leq 1 \; a.s.
		$
		%	\end{equation}
	Hence, we arrive at
	%	\begin{equation}
		\begin{align*}
			\limsup_{t \rightarrow \infty} \frac{ W_{\sin,\cos,\rho}(t)}{\sqrt{m^2 t \log\log t}} &\leq \limsup_{n \rightarrow \infty} \frac{\sup_{t\in [t_{n},t_{n+1}]}  W_{\sin,\cos,\rho}(t)}{\sqrt{m^2 t_{n} \log \log t_{n}}} \\
			&\leq \limsup_{n \rightarrow \infty} \frac{\sup_{t\in [0,t_{n+1}]}  W_{\sin,\cos,\rho}(t)}{\sqrt{m^2 t_{n} \log \log t_{n}}} 
			\leq \sqrt{\phi(\rho)}\quad a.s.
		\end{align*}
		%	\end{equation}
	Letting $m \rightarrow 1$, $W_{\sin,\cos,\rho}(t)$ obeys
	\begin{equation}
		\begin{aligned} \label{Wsincosupper1General}
			\limsup_{t \rightarrow \infty} \frac{ W_{\sin,\cos,\rho}(t)}{\sqrt{ t \log\log t}} \leq \sqrt{\phi(\rho)} \quad a.s.
		\end{aligned}
	\end{equation}
	Using the same method, we can also prove
	\begin{equation}
		\begin{aligned} \label{Wsincosupper2General}
			\limsup_{t \rightarrow \infty} \frac{ -W_{\sin,\cos,\rho}(t)}{\sqrt{ t \log\log t}} \leq \sqrt{\phi(\rho)}\quad a.s.
		\end{aligned}
	\end{equation}
	Combining \eqref{Wsincosupper1General} and \eqref{Wsincosupper2General}, it shows that
	%	\begin{equation}
		\begin{align*}
			\limsup_{t \rightarrow \infty} \frac{ \left|W_{\sin,\cos,\rho}(t)\right|}{\sqrt{ t \log\log t}} \leq \sqrt{\phi(\rho)} \quad a.s.
		\end{align*}
		%	\end{equation}
	Due to the fact that 
	$
	%	\begin{aligned}
		\sup_{\|x\| = 1} \langle f,x\rangle =\langle f,\tilde{x} \rangle
		%	\end{aligned}  
	$
	with $\tilde{x} = \frac{f}{\|f\|}$, for any given $\omega \in \tilde{\Omega}$, we have
	\begin{equation} 
		\begin{aligned}
			\left| W_{\sin,\cos,\tilde{\rho}}(t,\omega)\right| =\sup_{\sum^\infty_{j=1}  (\rho_{1,j}^2+ \rho_{2,j}^2)=1} \left|W_{\sin,\cos,\rho}(t,\omega) \right|,
		\end{aligned}  
	\end{equation}
	where $\tilde{\rho}(\omega) := \frac{\mathfrak{a}_1 W_{\sin}(t,\omega)+\mathfrak{a}_2 W_{\cos}(t,\omega)}{\| \mathfrak{a}_1 W_{\sin}(t,\omega)+\mathfrak{a}_2 W_{\cos}(t,\omega)\|_{\mathbb{R}}}$.
	% with			$\{(\tilde{\rho}_{1,j}(\omega),\tilde{\rho}_{2,j}(\omega))\}_{j=1}^\infty$ 
	%	$$
	%	\|\tilde{\rho}(\omega)\|^2_{\mathbb{R}}= \sum^\infty_{j=1} \left(\tilde{\rho}_{1,j}^2(\omega) +\tilde{\rho}_{1,j}^2(\omega) \right) = 1.
	%	$$
	%	Then we have
	Therefore, 
	%	\begin{equation} 
		\begin{align*}
			\limsup _{t \rightarrow \infty} \frac{\big\|\tilde{X}(t,\omega)\big\|_{\mathbb{R}}}{\sqrt{t \log \log t}} 
			%			&=  \limsup _{t \rightarrow \infty} \sup_{\sum^\infty_{j=1}\left(\rho_{1,j}^2 +\rho_{2,j}^2 \right) = 1}  \frac{\left|\langle \cos(Bt)X_0+\sin(Bt)Y_0,x \rangle_{\mathbb{R}} +  W_{\sin,\cos,\rho}(t,\omega) \right|}{\sqrt{ t \log \log t}} \\
			&=  \limsup _{t \rightarrow \infty}  \frac{\big|W_{\sin,\cos,\tilde{\rho}}(t,\omega)\big|}{\sqrt{ t \log \log t}} 
			\leq  \sqrt{\phi(\tilde{\rho}(\omega))}  
			\leq \sup_{\sum^\infty_{j=1}\left(\rho_{1,j}^2 +\rho_{2,j}^2 \right) = 1}\sqrt{\phi(\rho)}.
		\end{align*}  
		%	\end{equation}
	%	Since $\mathbb{P}(\tilde{\Omega})=1$, it holds that
	%	\begin{equation} 
		\begin{comment}
			\begin{align*}
				\limsup _{t \rightarrow \infty} \frac{\|\tilde{X}(t)\|_{\mathbb{R}}}{\sqrt{ t \log \log t}} 
				&\leq  \sup_{\sum^\infty_{j=1}\left(\rho_{1,j}^2 +\rho_{2,j}^2 \right) = 1}  \sqrt{\phi(\rho)}\quad a.s. 
			\end{align*}  
			%	\end{equation}
	\end{comment}
	\par
	Combining \textit{Step 1} and \textit{Step 2}, 
	\begin{comment}
		we obtain
		%	\begin{equation} 
			\begin{align*}
				\limsup _{t \rightarrow \infty} \frac{\|\tilde{X}(t)\|_{\mathbb{R}}}{\sqrt{ t \log \log t}} &=   \sup_{\sum^\infty_{j=1}\left(\rho_{1,j}^2 +\rho_{2,j}^2 \right) = 1}  \sqrt{\phi(\rho)}\quad a.s. ,
			\end{align*}  
			%	\end{equation}
	\end{comment}
	we finish the proof of Proposition \ref{LILContinuousGeneral}.
\end{proof}
\section{LIL for stochastic symplectic methods of linear SHS} \label{SectionDiscrete}
In this section, we study the LIL for fully discrete numerical methods of the linear SHS, whose spatial direction is based on the spectral Galerkin method and temporal direction is a class of one-step numerical methods. It is shown that the stochastic symplectic methods obey the LIL, but non-symplectic ones do not. Further, we show that stochastic symplectic methods asymptotically preserve the LIL of the exact solution case. %However, we prove that the non-symplectic methods do not obeys the LIL.
\subsection{LIL for $\mathscr{X}^{M,\tau}_{\mathrm{sym}}(t_n)$}
For $M \in \mathbb{N}^+$, we define the M-dimensional subspace of $(\mathbb{U},\langle\cdot, \cdot\rangle_{\mathbb{R}})$ as $\mathbb{U}_M:=\operatorname{span}\left\{e_1, e_2, \ldots, e_M\right\}$ and the projection operator $P_M: \mathbb{U} \rightarrow \mathbb{U}_M$ as $P_M x=\sum_{k=1}^M\left\langle x, e_k\right\rangle_{\mathbb{R}} e_k$ for each $x \in \mathbb{U}$. Then $P_M : \mathbb{H} \rightarrow  \mathbb{H}_M$ is also a projection operator such that $P_M x=\sum_{k=1}^M\langle x, e_k\rangle_{\mathbb{C}} e_k$ for each $x \in \mathbb{H}$. 
Denote $B^M=P_M B,W^M=P_M W,X_0^M=P_M X_0,Y_0^M=P_M Y_0$. %and $S_M(t)=e^{\mathbf{i} t \Delta_M}$ as the unitary $C_0$-group generated by $\mathbf{i} \Delta_M$. 
Using these notations, we obtain the following spectral Galerkin approximation:
\begin{equation}  \label{EquHamiltonianSystem}
	d\begin{pmatrix}
		X^{M}(t) \\
		Y^{M}(t)
	\end{pmatrix}=\begin{pmatrix}
		0 & B^M \\
		-B^M & 0
	\end{pmatrix}\begin{pmatrix}
		X^{M}(t) \\
		Y^{M}(t)
	\end{pmatrix} d t+\begin{pmatrix}
		\alpha_1 \\
		\alpha_2
	\end{pmatrix} d W^M(t), \quad \begin{pmatrix}
		X^{M}(0) \\
		Y^{M}(0)
	\end{pmatrix}=\begin{pmatrix}
		X^{M}_0 \\
		Y^{M}_0
	\end{pmatrix}.
\end{equation}
\begin{comment}
	Denoting
	\begin{align*} 
		x^M(t) &:=(\langle X^M(t), e_1\rangle_{\mathbb{R}},\langle X^M(t), e_2\rangle_{\mathbb{R}}, \ldots, \langle X^M(t), e_M\rangle_{\mathbb{R}})^{\top}, \\
		y^M(t) &:=(\langle Y^M(t), e_1\rangle_{\mathbb{R}},\langle Y^M(t), e_2\rangle_{\mathbb{R}}, \ldots, \langle Y^M(t), e_M\rangle_{\mathbb{R}})^{\top},
	\end{align*}
\end{comment}
Letting $X^{k,M}(t) := \langle X^M(t), e_k\rangle_{\mathbb{R}}$, $Y^{k,M}(t) := \langle Y^M(t), e_k\rangle_{\mathbb{R}}$, \eqref{EquHamiltonianSystem} is equivalent to the following M subsystems
\begin{equation} \nonumber
	d\begin{pmatrix}
		X^{k,M}(t) \\
		Y^{k,M}(t)
	\end{pmatrix}= \begin{pmatrix}
		0 & \lambda_k \\
		-\lambda_k & 0
	\end{pmatrix}\begin{pmatrix}
		X^{k,M}(t) \\
		Y^{k,M}(t)
	\end{pmatrix} d t+\sqrt{\eta_k}\begin{pmatrix}
		\alpha_{1} \\
		\alpha_{2}
	\end{pmatrix} d \beta_k(t), \quad k=1,2, \dots, M.
\end{equation}
%	where $\alpha_{1,k}=\alpha_1 \sqrt{\eta_k}, \alpha_{1,k}=\alpha_1 \sqrt{\eta_k}$. 
We further apply a class of one-step numerical methods in temporal direction to deriving the general full discretization $\{(X^{k,M, \tau}_{n}, Y^{k,M, \tau}_{n})\}_{n\in \mathbb{N}^+}$,
%	 the numerical approximation of $\{(X^{k,M}(t), Y^{k,M}(t))\}_{t\geq 0}$,
which satisfies the following equation
\begin{equation} \label{GeneralNumercialMethodForSchrodinger}
	\begin{pmatrix}
		X_{n+1}^{k,M, \tau} \\
		Y^{k,M, \tau}_{n+1}
	\end{pmatrix}=\begin{pmatrix}
		a_{11}\left(\lambda_k \tau\right) & a_{12}\left(\lambda_k \tau\right) \\
		a_{21}\left(\lambda_k  \tau\right) & a_{22}\left(\lambda_k  \tau\right)
	\end{pmatrix}\begin{pmatrix}
		X^{k,M, \tau}_{n} \\
		Y^{k,M, \tau}_{n}
	\end{pmatrix}+\sqrt{\eta_k}\begin{pmatrix}
		b_1\left(\lambda_k \tau\right) \\
		b_2\left(\lambda_k  \tau\right)
	\end{pmatrix} \delta \beta_{k, n}.
\end{equation}
Here, $\tau$ is the temporal step-size, $\delta \beta_{k, n} :=\beta_k\left(t_{n+1}\right)-\beta_k\left(t_n\right)$ with $t_n=n \tau, n=1,2, \ldots$, and $a_{i j}, b_i$ : $(0,+\infty) \rightarrow \mathbb{R}, i, j=1,2$ are  determined by a concrete numerical method. %In addition, we require $b_1^2(h)+b_2^2(h) \neq 0$ for all sufficiently small $h$. %Hence, we finally obtain the numerical solution $\left\{\left(p_n^M, q_n^M\right)\right\}_{n \in \mathbb{N}}$ generated by (\ref{GeneralNumercialMethodForSchrodinger}), with $\left(X^{k,M, \tau}_{n}, Y^{k,M, \tau}_{n}\right)$ being the $k$ th component of $\left(p_n^M, q_n^M\right), n=1,2, \dots$. 
\par
Defining
$$
A(h):=\begin{pmatrix}
	a_{11}(h) & a_{12}(h) \\
	a_{21}(h) & a_{22}(h)
\end{pmatrix}, \quad b(h):=\begin{pmatrix}
	b_1(h) \\
	b_2(h)
\end{pmatrix}, \quad \forall \, h>0,
$$
we rewrite (\ref{GeneralNumercialMethodForSchrodinger}) into
\begin{equation} \label{RewriteNumercialMethodForSchrodinger}
	\begin{pmatrix}
		X_{n+1}^{k,M, \tau} \\
		Y^{k,M, \tau}_{n+1}
	\end{pmatrix}=A\left(\lambda_k  \tau\right)\begin{pmatrix}
		X^{k,M, \tau}_{n} \\
		Y^{k,M, \tau}_{n}
	\end{pmatrix}+\sqrt{\eta_k} b\left(\lambda_k \tau\right) \delta \beta_{k, n}, \quad n=0,1,2, \ldots
\end{equation}
%	with $X^{k,M, \tau}_{0}+\mathbf{i} Y^{k,M, \tau}_{0}=\left\langle u^M(0), e_k\right\rangle_{\mathbb{C}}$.
%	Before give the expression of the general formula of $\big\{\big(X^{k,M, \tau}_{n}, Y^{k,M, \tau}_{n}\big)\big\}_{n \in \mathbb{N}^+}$ of the method (\ref{RewriteNumercialMethodForSchrodinger}), 
We make the following assumption to obtain the compact form for the solution of \eqref{RewriteNumercialMethodForSchrodinger}. 
\begin{assumption} \label{AssumptionforGeneralFormula}
	There exists some $h_1> 0$ such that 
	$$4 \operatorname{det}\big(A(h)\big)-\big(\operatorname{tr}(A(h) )\big)^{2}>0, \quad \forall \, h<h_1.$$
	%		where $\operatorname{tr}(A)$ and $\det(A)$ denote the trace and the determinant of A, respectively.
\end{assumption}
Under Assumption \ref{AssumptionforGeneralFormula}, it follows from \cite{chen2023large} that for $k\in\{1,2,\ldots,M\}$ and sufficiently small $\tau$, $\big\{\big(X^{k,M, \tau}_{n}, Y^{k,M, \tau}_{n}\big)\big\}_{n \in \mathbb{N}^+}$ can be written into the compact form 
%	\begin{equation}
	\begin{align*}
		X_{n}^{k,M, \tau}&= -\operatorname{det}(A) \hat{\alpha}_{n-1}^{k} X^{k,M, \tau}_{0}+\hat{\alpha}_{n}^{k}\big(a_{11} X^{k,M, \tau}_{0}+a_{12} Y^{k,M, \tau}_{0}\big) \\
		&\quad +\sqrt{\eta_k} \sum_{j=0}^{n-1}\big(-\operatorname{det}(A) \hat{\alpha}_{n-2-j}^{k} b_{1}+\left(a_{11} b_{1}+a_{12} b_{2}\right) \hat{\alpha}_{n-1-j}^{k}\big) \delta \beta_{k, j}, \\
		Y^{k,M, \tau}_{n}&= a_{21} \hat{\alpha}_{n}^{k} X^{k,M, \tau}_{0}+\hat{\alpha}_{n+1}^{k} Y^{k,M, \tau}_{0}-a_{11} \hat{\alpha}_{n}^{k} Y^{k,M, \tau}_{0} \\
		&\quad +\sqrt{\eta_k} \sum_{j=0}^{n-1}\big(\left(a_{21} b_{1}-a_{11} b_{2}\right) \hat{\alpha}_{n-1-j}^{k}+b_{2} \hat{\alpha}_{n-j}^{k}\big) \delta \beta_{k, j},
	\end{align*}
	%	\end{equation}
where $\hat{\alpha}_n^k=\frac{(\operatorname{det}\left(A\right))^{\frac{n-1}{2}} \sin \left(n \theta_k\right)}{\sin \left(\theta_k\right)}$
with
$\theta_k \in(0, \pi)$ such that
$
\cos \left(\theta_k\right)=\frac{\operatorname{tr}\left(A\right)}{2 \sqrt{\operatorname{det}\left(A\right)}}$ and $\sin \left(\theta_k\right)=\frac{\sqrt{4 \operatorname{det}\left(A\right)-\left(\operatorname{tr}\left(A\right)\right)^2}}{2 \sqrt{\operatorname{det}\left(A\right)}}
$.
Here, $\operatorname{det}(A), a_{i j}, b_i \;(i, j=1,2)$ are computed at $\lambda_k \tau$.
For convenience, we always omit the argument $\lambda_k \tau$ in $\operatorname{det}(A), a_{i j}, b_i \; (i, j=1,2)$ when no confusion occurs.
\par
Let $X^{M,\tau}(t_n)=\sum^M_{k=1} X_n^{k,M,\tau} e_k \in \mathbb{U_M}, Y^{M,\tau}(t_n)=\sum^M_{k=1} Y_n^{k,M,\tau} e_k\in \mathbb{U_M}$. The stochastic symplectic fully discrete method obeys the LIL, which is stated in the following theorem.
%	we are going to show that for stochastic symplectic methods, $\|X^{M,\tau}(t_n)\|_{\mathbb{R}}, \|Y^{M,\tau}(t_n)\|_{\mathbb{R}}$ and $(\|X^{M,\tau}(t_n)\|^2_{\mathbb{R}}+\|Y^{M,\tau}(t_n)\|^2_{\mathbb{R}})^{\frac{1}{2}}$ also obey LIL in this section. Further, the asymptotic preservation property can also be proved. With the above preliminaries, 
\begin{thm} \label{LILfordiscrete}
	Let Assumption \ref{AssumptionforGeneralFormula} hold. If the numerical method \eqref{RewriteNumercialMethodForSchrodinger} is symplectic, then the LILs hold as follows:
	%	\begin{equation} \nonumber
		\begin{align}
			&	\limsup_{n \rightarrow \infty} \frac{\|X^{M,\tau}(t_n)\|_{\mathbb{R}}}{\sqrt{ t_{n} \log \log t_{n}}} = \max_{k\in\{1,2,\ldots,M\}} \sqrt{\xi_{k,1,\tau} \eta_k} \quad a.s., \label{XLIL}\\
			&	\limsup_{n \rightarrow \infty} \frac{\|Y^{M,\tau}(t_n)\|_{\mathbb{R}}}{\sqrt{ t_{n} \log \log t_{n}}} = \max_{k\in\{1,2,\ldots,M\}} \sqrt{\xi_{k,2,\tau} \eta_k} \quad a.s., \label{YLIL}\\
			&	\limsup_{n \rightarrow \infty} \label{XYLIL} \frac{(\|X^{M,\tau}(t_n)\|^2_{\mathbb{R}}+\|Y^{M,\tau}(t_n)\|^2_{\mathbb{R}})^{\frac{1}{2}}}{\sqrt{ t_{n} \log \log t_{n}}} = \sup_{\sum^M_{k=1}  (\rho_{1,k}^2+ \rho_{2,k}^2)=1} \sqrt{\phi^{M,\tau}(\rho)} \quad a.s. ,
		\end{align}  
		%	\end{equation}
	where %$\xi_{k,\tau} := \max\left\{\xi_{k,1,\tau},\xi_{k,2,\tau}\right\}$ with
	%	\begin{equation}
		\begin{align*}
			\xi_{k,1,\tau} &:= \frac{b_{1}^{2}+\left(a_{11} b_{1}+a_{12} b_{2}\right)^{2}-2 b_{1}\left(a_{11} b_{1}+a_{12} b_{2}\right) \cos \left(\theta_{k}\right)}{ \sin ^{2}\left(\theta_{k}\right)}, \\
			\xi_{k,2,\tau} &:= \frac{b_{2}^{2}+\left(a_{21} b_{1}-a_{11} b_{2}\right)^{2}+2 b_{2}\left(a_{21} b_{1}-a_{11} b_{2}\right) \cos \left(\theta_{k}\right)}{ \sin ^{2}\left(\theta_{k}\right)}, 
		\end{align*}
		and		$
		\phi^{M,\tau}(\rho) :=\sum^{M}_{k=1} \big(\xi_{k,1,\tau}\rho^2_{1,k}+\xi_{k,2,\tau}\rho^2_{2,k}+\xi_{k,3,\tau}\rho_{1,k}\rho_{2,k}\big)\eta_k
		$ with $\rho:=\sum^M_{k=1}(\rho_{1,k}+\mathbf{i}\rho_{2,k})e_k$, 
		\begin{align*}
			\xi_{k,3,\tau} &:= \frac{-b_1b_2 \cos(2\theta_k) +[b_2(a_{11} b_{1}+a_{12} b_{2})-b_1\left(a_{21} b_{1}-a_{11} b_{2}\right)] \cos (\theta_k)}{\sin^2(\theta_k)} \\
			&\quad +\frac{(a_{11} b_{1}+a_{12} b_{2})\left(a_{21} b_{1}-a_{11} b_{2}\right)}{\sin^2(\theta_k)}.
		\end{align*}
		%	\end{equation}
	%	$\phi^{M,\tau}(\rho) :=\sum^{M}_{k=1} \big(\xi_{k,1,\tau}\rho^2_{1,k}+\xi_{k,2,\tau}\rho^2_{2,k}+\xi_{k,3,\tau}\rho_{1,k}\rho_{2,k}\big)\eta_k $ with
\end{thm}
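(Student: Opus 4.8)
The plan is to run the discrete counterpart of the proof of Theorem~\ref{LILContinuous}: prove one ``general'' proposition and specialize it three times. The structural input is that symplecticity of the linear one-step scheme \eqref{RewriteNumercialMethodForSchrodinger} is equivalent to $\det\big(A(\lambda_k\tau)\big)=1$ for all $\tau$ below some threshold, so that $\hat\alpha_n^k=\sin(n\theta_k)/\sin(\theta_k)$ is bounded uniformly in $n$ (there are only finitely many $\theta_k$, $k\le M$, all in $(0,\pi)$ by Assumption~\ref{AssumptionforGeneralFormula}), and the deterministic parts of $X_n^{k,M,\tau}$ and $Y_n^{k,M,\tau}$ are $O(1)$ in $n$, hence negligible against $\sqrt{t_n\log\log t_n}$. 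Fixing $\rho=\sum_{k=1}^M(\rho_{1,k}+\mathbf i\,\rho_{2,k})e_k$ with $\sum_k(\rho_{1,k}^2+\rho_{2,k}^2)=1$, set $Z^{M,\tau}(t_n):=X^{M,\tau}(t_n)+\mathbf i\,Y^{M,\tau}(t_n)$ and let $W^{M,\tau}_{\sin,\cos,\rho}(t_n)$ be the stochastic part of $\langle Z^{M,\tau}(t_n),\rho\rangle_{\mathbb R}$; the general proposition to be proved is $\limsup_{n\to\infty}W^{M,\tau}_{\sin,\cos,\rho}(t_n)/\sqrt{t_n\log\log t_n}=\sqrt{\phi^{M,\tau}(\rho)}$ a.s. Since $X^{M,\tau}(t_n),Y^{M,\tau}(t_n)\in\mathbb U_M$ and $(\|X^{M,\tau}(t_n)\|_{\mathbb R}^2+\|Y^{M,\tau}(t_n)\|_{\mathbb R}^2)^{1/2}=\|Z^{M,\tau}(t_n)\|_{\mathbb R}$, taking $\rho$ with $\rho_{2,k}\equiv 0$, with $\rho_{1,k}\equiv 0$, and with $\rho$ free, and optimizing the quadratic form $\phi^{M,\tau}$ over the unit sphere of $\mathbb R^M$ (resp.\ $\mathbb C^M$) yields \eqref{XLIL}, \eqref{YLIL}, \eqref{XYLIL}: in the first two cases $\phi^{M,\tau}$ reduces to $\sum_k\xi_{k,1,\tau}\rho_{1,k}^2\eta_k$, resp.\ $\sum_k\xi_{k,2,\tau}\rho_{2,k}^2\eta_k$, whose supremum over the sphere is $\max_k\xi_{k,1,\tau}\eta_k$, resp.\ $\max_k\xi_{k,2,\tau}\eta_k$.

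To analyze $W^{M,\tau}_{\sin,\cos,\rho}(t_n)$, read off from the compact form that it equals $\sum_{k=1}^M\sqrt{\eta_k}\sum_{j=0}^{n-1}c^k_{n-1-j}(\rho)\,\delta\beta_{k,j}$, where, using $\det A=1$, each $c^k_m(\rho)$ is an explicit $\rho$-linear combination of $\sin(m\theta_k)$ and $\sin((m\pm1)\theta_k)$. Two consequences follow. First, applying angle-addition to $\sin\big((n-1-j)\theta_k\big)$, $\sin((n-2-j)\theta_k)$, $\sin((n-j)\theta_k)$ to peel off the $n$-dependence, the inner sum becomes $\sin(n\theta_k)\,U^k_n(\rho)+\cos(n\theta_k)\,V^k_n(\rho)$ with $U^k_n(\rho),V^k_n(\rho)=\sum_{j=0}^{n-1}\gamma_j\,\delta\beta_{k,j}$ genuine discrete martingales with bounded coefficients; writing $\sum_{j=0}^{n-1}\gamma_j\delta\beta_{k,j}=\int_0^{t_n}g(s)\,d\beta_k(s)$ and time-changing exactly as in Proposition~\ref{RoughBoundGeneral}, together with the Cesàro limit $\tau n^{-1}\sum_{j=0}^{n-1}\gamma_j^2\to\bar\gamma$ (which exists since $\theta_k\in(0,\pi)$), gives the discrete martingale LIL $\limsup_n|\sum_{j=0}^{n-1}\gamma_j\delta\beta_{k,j}|/\sqrt{t_n\log\log t_n}=\sqrt{\bar\gamma}$ a.s.\ (the degenerate case $\bar\gamma=0$ being a bounded, in fact deterministic, component, for which the asserted identity reads $0=0$). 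Second, the Cesàro averages of $\sin^2(m\theta_k)$, $\cos^2(m\theta_k)$ (both $\to\tfrac12$) and of the cross products $\sin(m\theta_k)\sin((m\pm1)\theta_k)$ (giving $\tfrac12\cos\theta_k$, resp.\ a $\cos(2\theta_k)$ term) are precisely what turn the per-unit-time variance rate of $W^{M,\tau}_{\sin,\cos,\rho}$ into $\phi^{M,\tau}(\rho)$, with the constants $\xi_{k,1,\tau},\xi_{k,2,\tau},\xi_{k,3,\tau}$ emerging on the nose.

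For the lower bound I would take a subsequence $n_\ell$ with $t_{n_\ell}\sim m^{\ell}$ ($m>1$) and split $W^{M,\tau}_{\sin,\cos,\rho}(t_{n_\ell})=A_\ell+B_\ell$, where $A_\ell$ collects the increments $\delta\beta_{k,j}$ with $n_{\ell-1}\le j<n_\ell$ and $B_\ell$ the earlier ones. Then $\{A_\ell\}$ is a sequence of independent centered Gaussians with $\mathrm{Var}(A_\ell)=\tfrac12\Delta t_{n_\ell}\,\phi^{M,\tau}(\rho)+O(1)$ (the $O(1)$ from the bounded Cesàro remainders and bounded $\hat\alpha^k$), so a Gaussian lower-tail estimate and the divergence half of the Borel--Cantelli lemma give $\{A_\ell>C_\ell\}$ infinitely often a.s.\ with $C_\ell=(\beta^{-1}\Delta t_{n_\ell}\phi^{M,\tau}(\rho)\log\log t_{n_\ell})^{1/2}$, $\beta\in(1,2]$, while $B_\ell=\sum_k\sqrt{\eta_k}\big(\sin(n_\ell\theta_k)U^k_{n_{\ell-1}}(\rho)+\cos(n_\ell\theta_k)V^k_{n_{\ell-1}}(\rho)\big)$ is $O\big(\sqrt{t_{n_{\ell-1}}\log\log t_{n_{\ell-1}}}\big)$ a.s.\ by the martingale LIL above; letting $m\to\infty$, $\beta\to1$ yields the lower bound $\sqrt{\phi^{M,\tau}(\rho)}$. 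For the upper bound I would apply the Borell--TIS inequality to the centered Gaussian family $\{W^{M,\tau}_{\sin,\cos,\rho}(t_j)\}_{j\le n+1}$, bounding the variance proxy by $\tfrac12 t_{n+1}\phi^{M,\tau}(\rho)+O(1)$ and the expected maximum by $C\sqrt{t_{n+1}}$ via Doob's inequality and Burkholder--Davis--Gundy applied to the martingales $U^k,V^k$, so that along $t_n=m^n$ with $m$ slightly above $1$ the events $\{\max_{j\le n+1}W^{M,\tau}_{\sin,\cos,\rho}(t_j)\ge\sqrt{m^2\phi^{M,\tau}(\rho)t_n\log\log t_n}\}$ are summable; Borel--Cantelli and then $m\to1$ close the fixed-$\rho$ estimate, and, $\mathbb R^M$ (resp.\ $\mathbb C^M$) being finite-dimensional, a compactness argument over the unit sphere (continuity of $\phi^{M,\tau}$, attainment of the random maximizer of $\rho\mapsto|W^{M,\tau}_{\sin,\cos,\rho}|$) upgrades this to the uniform-in-$\rho$ bound required for the three norms. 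The main obstacle is the middle step: extracting the honest martingales $U^k_n(\rho),V^k_n(\rho)$ — which works precisely because $\det A=1$, whereas for a non-symplectic scheme the coefficients carry a factor $(\det A)^{(n-1)/2}$ that breaks both the Cesàro/variance analysis and the LIL itself — and then pinning down the trigonometric Cesàro limits so that the variance rate equals $\phi^{M,\tau}(\rho)$ exactly, with uniform control of all the $O(1)$ remainders.
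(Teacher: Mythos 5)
Your proposal follows essentially the same route as the paper: decompose the discrete stochastic convolution into the martingales obtained by peeling off the $n$-dependence via angle addition (the paper's $\tilde M^{\tau}_{1,k},\ldots,\tilde M^{\tau}_{4,k}$, with the time-change theorem and the exact trigonometric sums giving the rates $\xi_{k,1,\tau},\xi_{k,2,\tau},\xi_{k,3,\tau}$, where $\det A=1$ for the symplectic scheme keeps $\hat\alpha^k_n$ bounded), then prove the lower bound by the geometric-time splitting $A+B$ with Gaussian lower tails and Borel--Cantelli, the upper bound by BDG plus the Borell--TIS inequality and the random maximizer over the unit sphere, and finally obtain \eqref{XLIL} and \eqref{YLIL} by specializing $\rho_{2,k}\equiv0$, resp.\ $\rho_{1,k}\equiv0$, and noting the supremum of the diagonal form is the maximum. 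This matches the paper's proof, so no substantive differences to report.
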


To prove Theorem \ref{LILfordiscrete}, we provide some preliminaries.
For $k\in \mathbb{N}^+$, define real-valued martingales as follows
\begin{align*}
	\tilde{M}^{\tau}_{1,k}(n) &:=\frac{1}{\sin (\theta_k)}\sum_{j=0}^{n-1} \big[ -b_1\cos ((j+1) \theta_k) +\left(a_{11} b_{1}+a_{12} b_{2}\right) \cos(j\theta_k) \big]\delta \beta_{k,j}, \\
	\tilde{M}^{\tau}_{2,k}(n) &:=\frac{1}{\sin  (\theta_k)}\sum_{j=0}^{n-1} \big[ -b_1\sin ((j+1) \theta_k) +\left(a_{11} b_{1}+a_{12} b_{2}\right) \sin(j\theta_k) \big]\delta \beta_{k,j}, \\
	\tilde{M}^{\tau}_{3,k}(n) &:=\frac{1}{\sin  (\theta_k)}\sum_{j=0}^{n-1} \big[ b_2\cos ((j-1) \theta_k) +\left(a_{21} b_{1}-a_{11} b_{2}\right) \cos(j\theta_k) \big]\delta \beta_{k,j}, \\
	\tilde{M}^{\tau}_{4,k}(n) &:=\frac{1}{\sin  (\theta_k)}\sum_{j=0}^{n-1} \big[ b_2\sin ((j-1) \theta_k) +\left(a_{21} b_{1}-a_{11} b_{2}\right) \sin(j\theta_k) \big]\delta \beta_{k,j}.
\end{align*}
Then it is proved in the following proposition that  $\tilde{M}^{\tau}_{j,k}(n)$, $j=1,\ldots,4$  obey the LILs.
\begin{prop} \label{DiscreteRoughBoundSch}
	For $k\in \mathbb{N}^+$, martingales $\tilde{M}^{\tau}_{j,k}(n)$, $j=1,\ldots, 4$  obey the following LILs:
	%		\begin{equation}
		\begin{align*}
			\limsup _{n \rightarrow \infty} \frac{\big|\tilde{M}^{\tau}_{j,k}(n) \big|}{\sqrt{t_n \log \log t_n}} &= \sqrt{\xi_{k,1,\tau}} \quad a.s. \;\; (j=1,2), \\
			\limsup _{n \rightarrow \infty} \frac{\big|\tilde{M}^{\tau}_{j,k}(n) \big|}{\sqrt{t_n \log \log t_n}} &= \sqrt{\xi_{k,2,\tau}} \quad a.s. \;\; (j=3,4).
			%		\limsup _{n \rightarrow \infty} \frac{\big|\tilde{M}^{\tau}_{3,k}(n) \big|}{\sqrt{t_n \log \log t_n}} &= \sqrt{\xi_{k,2,\tau}} \quad a.s., \quad \limsup _{n \rightarrow \infty} \frac{\big|\tilde{M}^{\tau}_{4,k}(n) \big|}{\sqrt{t_n \log \log t_n}} = \sqrt{\xi_{k,2,\tau}} \quad a.s. 
		\end{align*}
		%		\end{equation}
\end{prop}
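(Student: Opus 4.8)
The plan is to mirror the proof of Proposition \ref{RoughBoundGeneral}, applying the time-change theorem for discrete martingales (or rather for the continuous-time martingale obtained by interpolation) to each $\tilde{M}^{\tau}_{j,k}$. First I would observe that for fixed $k$, each $\tilde{M}^{\tau}_{j,k}(n)$ is a sum of the form $\sum_{j=0}^{n-1} c_{j} \,\delta\beta_{k,j}$ with deterministic coefficients $c_j$ depending on $\theta_k$, $\tau$, and the scheme data, so it is a discrete-time martingale with respect to the filtration generated by $\{\beta_k\}$. Its quadratic variation is $\langle \tilde{M}^{\tau}_{1,k}\rangle(n) = \tau \sum_{j=0}^{n-1} c_j^2$ (since $\delta\beta_{k,j}$ has variance $\tau$ and increments are independent), and the key computation is to evaluate $\frac{1}{n}\sum_{j=0}^{n-1} c_j^2$ in the limit $n\to\infty$.

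The central computation is therefore the Cesàro-type averaging of trigonometric squares. For $\tilde{M}^{\tau}_{1,k}$ the summand is $c_j = \frac{1}{\sin\theta_k}\big[-b_1\cos((j+1)\theta_k) + (a_{11}b_1+a_{12}b_2)\cos(j\theta_k)\big]$. Expanding $c_j^2$ and using the standard facts that $\frac1n\sum_{j=0}^{n-1}\cos^2((j+r)\theta_k)\to \frac12$, $\frac1n\sum_{j=0}^{n-1}\cos((j+r)\theta_k)\cos((j+s)\theta_k)\to \frac12\cos((r-s)\theta_k)$ whenever $\theta_k\notin\pi\mathbb{Z}$ (which holds by Assumption \ref{AssumptionforGeneralFormula}, since $\sin\theta_k>0$), I would get
\begin{equation*}
\lim_{n\to\infty}\frac{1}{n}\sum_{j=0}^{n-1} c_j^2 = \frac{1}{\sin^2\theta_k}\Big(\tfrac{b_1^2}{2} + \tfrac{(a_{11}b_1+a_{12}b_2)^2}{2} - b_1(a_{11}b_1+a_{12}b_2)\cos\theta_k\Big) = \tfrac12\,\xi_{k,1,\tau}.
\end{equation*}
Hence $\lim_{n\to\infty}\frac{2\langle\tilde{M}^{\tau}_{1,k}\rangle(n)}{t_n} = \lim_{n\to\infty}\frac{2\tau\sum_{j=0}^{n-1}c_j^2}{n\tau} = \xi_{k,1,\tau}$. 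The same averaging with $\cos$ replaced by $\sin$ gives the identical limit for $\tilde{M}^{\tau}_{2,k}$, and the analogous computation with the coefficient pair $(b_2,\,a_{21}b_1-a_{11}b_2)$ (and the phase shift $(j-1)\theta_k$, which only changes which cross-term cosine appears, namely $\cos((j-1-j)\theta_k)=\cos\theta_k$ with the appropriate sign) yields $\tfrac12\xi_{k,2,\tau}$ for $\tilde{M}^{\tau}_{3,k}$ and $\tilde{M}^{\tau}_{4,k}$; the sign in front of the cross term flips to $+$ because the phase difference contributes $+\cos\theta_k$ and $b_2(a_{21}b_1-a_{11}b_2)$ enters with the matching sign, reproducing exactly the displayed formula for $\xi_{k,2,\tau}$.

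With the quadratic variation asymptotics in hand I would finish exactly as in Proposition \ref{RoughBoundGeneral}: embed the discrete martingale into continuous time (e.g.\ as a Brownian stochastic integral with piecewise-constant integrand, or via \cite[Theorem 3.4.6]{karatzas2012brownian}), write $\tilde{M}^{\tau}_{1,k}(n) = W_{1,k}(\langle\tilde{M}^{\tau}_{1,k}\rangle(n))$ for a standard Brownian motion $W_{1,k}$, and then
\begin{equation*}
\limsup_{n\to\infty}\frac{|\tilde{M}^{\tau}_{1,k}(n)|}{\sqrt{t_n\log\log t_n}} = \limsup_{n\to\infty}\frac{|W_{1,k}(\langle\tilde{M}^{\tau}_{1,k}\rangle(n))|}{\sqrt{2\langle\tilde{M}^{\tau}_{1,k}\rangle(n)\log\log\langle\tilde{M}^{\tau}_{1,k}\rangle(n)}}\cdot\sqrt{\frac{2\langle\tilde{M}^{\tau}_{1,k}\rangle(n)}{t_n}} = \sqrt{\xi_{k,1,\tau}}\quad a.s.,
\end{equation*}
invoking the classical LIL for $W_{1,k}$ together with $\langle\tilde{M}^{\tau}_{1,k}\rangle(n)\to\infty$. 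The main obstacle I anticipate is not conceptual but bookkeeping: carefully verifying that $\theta_k\in(0,\pi)$ bounds the oscillatory averages uniformly enough that the Cesàro limits genuinely hold (one needs $\theta_k$ bounded away from $0$ and $\pi$ for a fixed $k$, which is automatic, but one must be careful that the remainder terms $O(1/n)$ in the partial sums of $\cos(r j\theta_k)$ really vanish), and getting all the sign conventions in $\xi_{k,2,\tau}$ and especially the cross-term $\xi_{k,3,\tau}$ consistent — though $\xi_{k,3,\tau}$ itself is not needed for this proposition, it will reappear when assembling \eqref{XYLIL}, so keeping the trigonometric identities organized is the real work.
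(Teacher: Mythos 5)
Your proposal is correct and follows essentially the same route as the paper: identify the quadratic variation $\tau\sum_{j} c_j^2$ of each discrete martingale, show $2\langle \tilde{M}^{\tau}_{j,k}\rangle(n)/t_n \to \xi_{k,1,\tau}$ (resp.\ $\xi_{k,2,\tau}$), and conclude via the time-change theorem and the Brownian LIL exactly as in Proposition \ref{RoughBoundGeneral}. The only cosmetic difference is that the paper evaluates the trigonometric sums in closed form via \eqref{sumOfsincos}, yielding an explicitly bounded remainder $\mathcal{K}_i(k,n,\tau)$, whereas you obtain the same asymptotics by Ces\`aro averaging; the mechanism and conclusion are identical.
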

\begin{proof}
	%	Similar to the proof of \eqref{LimsupOfM1jGeneral}, it suffices to give the quadratic variation process of the martingale $\tilde{M}^{\tau}_{j,k}(n)$. 
	Using  
	\begin{equation} \label{sumOfsincos}
		\begin{aligned}
			\sum^{n-1}_{j=0} \sin(2j\theta) =\frac{\cos\theta-\cos\big((2n-1)\theta\big)}{2\sin\theta},\quad
			\sum^{n-1}_{j=0} \cos(2j\theta) =\frac{1}{2}+\frac{\sin\big((2n-1)\theta\big)}{2\sin\theta},
		\end{aligned}
	\end{equation}
	we derive the quadratic variation processes of the martingales
	%	$\tilde{M}^{\tau}_{1,k}(n)$ and $\tilde{M}^{\tau}_{2,k}(n)$ are martingales with
	$$
	\begin{aligned}
		\langle \tilde{M}^{\tau}_{1,k}\rangle(n) 
		=\frac{\xi_{k,1,\tau}}{2} t_n +\mathcal{K}_1(k,n,\tau), \quad \langle\tilde{M}^{\tau}_{2,k}\rangle(n)=\frac{\xi_{k,1,\tau}}{2} t_n -\mathcal{K}_1(k,n,\tau), \\
		\langle \tilde{M}^{\tau}_{3,k}\rangle(n) 
		=\frac{\xi_{k,2,\tau}}{2} t_n +\mathcal{K}_2(k,n,\tau),  \quad 
		\langle \tilde{M}^{\tau}_{4,k}\rangle(n) 
		=\frac{\xi_{k,2,\tau}}{2} t_n -\mathcal{K}_2(k,n,\tau),
	\end{aligned}
	$$
	where
	%		$$
	\begin{align*}
		&\quad \mathcal{K}_1(k,n,\tau)\\ &:=\frac{1}{4\sin^2 (\theta_k)}\Big(\frac{\sin((2n-1)\theta_k)}{\sin (\theta_k)}+1\Big)\Big(b_1^2\cos(2\theta_k)+\left(a_{11} b_{1}+a_{12} b_{2}\right)^2-2b_{1}\left(a_{11} b_{1}+a_{12} b_{2}\right)\cos(\theta_k)\Big)\tau\\
		&\quad +\frac{1}{\sin^2 (\theta_k)}\left[-b_1 \cos(\theta_k)+\left(a_{11} b_{1}+a_{12} b_{2}\right)\right]b_1\sin(\theta_k)\Big(\frac{\cos(\theta_k)-\cos((2n-1)\theta_k)}{2\sin(\theta_k)}\Big)\tau,		\\
		&\quad \mathcal{K}_2(k,n,\tau)\\ &:=\frac{1}{4\sin^2 (\theta_k)}\Big(\frac{\sin((2n-1)\theta_k)}{\sin (\theta_k)}+1\Big)\Big(b_2^2\cos(2\theta_{k})+\left(a_{21} b_{1}-a_{11} b_{2}\right)^2+2b_{2}\left(a_{21} b_{1}-a_{11} b_{2}\right)\cos(\theta_k)\Big)\tau\\
		&\quad +\frac{1}{\sin^2 (\theta_k)}\left(b_2\cos(\theta_k)+a_{21} b_{1}-a_{11} b_{2}\right) b_2\sin(\theta_k)\Big(\frac{\cos(\theta_k)-\cos((2n-1)\theta_k)}{2\sin(\theta_k)}\Big)\tau.		
	\end{align*}
	%	Similar to the proof of \eqref{LimsupOfM1jGeneral}, we arrive at
	\begin{comment}
		\begin{equation} \nonumber
			\begin{aligned}
				\limsup _{n \rightarrow \infty}  \frac{ \big|\tilde{M}^{\tau}_{1,k}(n)\big|}{\sqrt{t_n \log \log t_{n}}} 
				=\sqrt{\xi_{k,1,\tau}} \quad a.s. \quad \text{and} \quad   \limsup _{n \rightarrow \infty}  \frac{ \big|\tilde{M}^{\tau}_{2,k}(n)\big|}{\sqrt{ t_n \log \log t_{n}}} 
				=\sqrt{\xi_{k,1,\tau}} \quad a.s.
			\end{aligned}
		\end{equation}
		%		Furthermore, we also prove that $\tilde{M}^{\tau}_{3,k}(n)$ and $\tilde{M}^{\tau}_{4,k}(n)$ are martingales with
		%		$$
		Therefore, we also obtain
		\begin{equation} \nonumber
			\begin{aligned}
				\limsup _{n \rightarrow \infty}  \frac{ \big|\tilde{M}^{\tau}_{3,k}(n)\big|}{\sqrt{ t_n \log \log t_{n}}} 
				=\sqrt{\xi_{k,2,\tau}} \quad a.s. \quad \text{and} \quad \limsup _{n \rightarrow \infty}  \frac{ \big|\tilde{M}^{\tau}_{4,k}(n)\big|}{\sqrt{ t_n \log \log t_{n}}} 
				=\sqrt{\xi_{k,2,\tau}} \quad a.s.
			\end{aligned}
		\end{equation}
	\end{comment}
	Since $\mathcal{K}_1(k,n,\tau),\mathcal{K}_2(k,n,\tau)$ are uniformly bounded with respect to $n$, similar to the proof of \eqref{LimsupOfM1jGeneral}, we finish the proof.
	%The proof can be finished by the similar argument to that in \eqref{LimsupOfM1jGeneral}, which is omitted.
\end{proof}

Based on Proposition \ref{DiscreteRoughBoundSch}, we give the proof of Theorem \ref{LILfordiscrete}.
\begin{proof}[Proof of Theorem \ref{LILfordiscrete}]
	At first, we prove the LIL for $(\|X^{M,\tau}(t_n)\|^2_{\mathbb{R}}+\|Y^{M,\tau}(t_n)\|^2_{\mathbb{R}})^{\frac{1}{2}}$.
	Let $u^{M,\tau}(t_n):=X^{M,\tau}(t_n)+\mathbf{i}Y^{M,\tau}(t_n)$. 
	By the Riesz representation theorem, we have
	$
	\| u^{M,\tau}(t_n) \|^2_{\mathbb{R}} =\sup_{\{\rho\in \mathbb{H}_M, \|\rho\|_{\mathbb{R}}=1\}} |\langle u^{M,\tau}(t_n),\rho \rangle_{\mathbb{R}} |^2. 
	$
	Hence, for all $\rho = \sum^M_{k=1} \left(\rho_{1,k} +\mathbf{i} \rho_{2,k}\right) e_k \in \mathbb{H}_M$  with $\rho_{1,k},\rho_{2,k} \in \mathbb{R}$ and
	$
	\|\rho\|^2_{\mathbb{R}}= \sum^M_{k=1} (\rho_{1,k}^2 +\rho_{2,k}^2 ) = 1,
	$
	we obtain
	%		\begin{equation}
		\begin{align*}
			\big\|u^{M,\tau}(t_n)\big\|^2_{\mathbb{R}} 
			&= \sup_{\sum^M_{k=1} \left(\rho_{1,k}^2 +\rho_{2,k}^2 \right) = 1} \Big| \sum^M_{k=1}\big(\langle X^{k,M, \tau}_n e_k,\rho_{1,k}e_k \rangle_{\mathbb{R}}+ \langle Y^{k,M, \tau}_n e_k,\rho_{2,k}e_k\rangle_{\mathbb{R}}\big) \Big|^2.
		\end{align*}
		For convenience, we denote $X^{k,M, \tau}_n = C^{k,M,\tau}_1(n) +\sqrt{\eta_k} G^{k,M,\tau}_1(n)$ and $ Y^{k,M, \tau}_n = C^{k,M,\tau}_2(n) +\sqrt{\eta_k} G^{k,M,\tau}_2(n)$ with
		%		\begin{equation} \label{lowerBoundDiscreteSch}
			\begin{align*}
				C^{k,M,\tau}_1(n)&:=  -\hat{\alpha}_{n-1}^{k} X^{k,M, \tau}_{0}+\hat{\alpha}_{n}^{k}\big(a_{11} X^{k,M, \tau}_{0}+a_{12} Y^{k,M, \tau}_{0}\big),\\
				C^{k,M,\tau}_2(n)&:=  a_{21}\hat{\alpha}_{n}^{k} X^{k,M, \tau}_{0}+\hat{\alpha}_{n+1}^{k}Y^{k,M, \tau}_{0}-a_{11}\hat{\alpha}_{n}^{k} Y^{k,M, \tau}_{0}, \\
				G^{k,M,\tau}_1(n)&:=  \sum_{j=0}^{n-1}\big(- \hat{\alpha}_{n-2-j}^{k} b_{1}+\left(a_{11} b_{1}+a_{12} b_{2}\right) \hat{\alpha}_{n-1-j}^{k}\big) \delta \beta_{k, j},   \\
				G^{k,M,\tau}_2(n)&:=  \sum_{j=0}^{n-1}\big(\hat{\alpha}_{n-j}^{k} b_{2}+\left(a_{21} b_{1}-a_{11} b_{2}\right) \hat{\alpha}_{n-1-j}^{k}\big) \delta \beta_{k, j}. 
			\end{align*}
			%		\end{equation}
		Then
		%		\begin{equation}
			$
			\big\|u^{M,\tau}(t_n)\big\|^2_{\mathbb{R}} 
			%	&= \sup_{\sum^M_{k=1} \left(\rho_{1,k}^2 +\rho_{2,k}^2 \right) = 1} \Big| \sum^M_{k=1}\big(\langle X^{k,M, \tau}_n e_k,\rho_{1,k}e_k \rangle_{\mathbb{R}}+ \langle Y^{k,M, \tau}_n e_k,\rho_{2,k}e_k\rangle_{\mathbb{R}}\big) \Big|^2 \\
			= \sup_{\sum^M_{k=1}  (\rho_{1,k}^2+ \rho_{2,k}^2)=1} \Big| \sum^M_{k=1}\big( \rho_{1,k}C^{k,M,\tau}_1(n)+ \rho_{2,k}C^{k,M,\tau}_2(n) 
			\big)+G^{M,\tau}_{\rho}(n) \Big|^2,
			$
			%		\end{equation}
		%	For each $\rho$, %sequence $\left\{(\rho_{1,k},\rho_{2,k})\right\}_{k=1}^M$ such that $\sum^M_{k=1} \big(\rho_{1,k}^2 +\rho_{2,k}^2 \big) = 1$, 
		where
		$G^{M,\tau}_{\rho}(n) := \sum^M_{k=1}\sqrt{\eta_k} \big(\rho_{1,k}G^{k,M,\tau}_1(n)+ \rho_{2,k} G^{k,M,\tau}_2(n)\big).$ 
		Now we divide the proof of \eqref{XYLIL} into two steps. \par
		\textit{Step 1: Lower bound of $\limsup_{n \rightarrow \infty} \frac{\left\|u^{M,\tau}\left(t_{n}\right)\right\|_{\mathbb{R}}}{\sqrt{ t_{n} \log \log t_{n}}}$}.
		
		Let $ m>2$. We decompose $G^{M,\tau}_{\rho}(m^n)$  as 
		$G^{M,\tau}_{\rho}(m^{n}) =A^{M,\tau}_{m^n,\rho}+B^{M,\tau}_{m^n,\rho},$
		where
		\begin{align*}
			A^{M,\tau}_{m^n,\rho} &:= \sum^M_{k=1}  \Big(\rho_{1,k}\sum_{j=m^{n-1}}^{m^{n}-1} \big(- \hat{\alpha}_{m^n-2-j}^{k} b_{1}+\left(a_{11} b_{1}+a_{12} b_{2}\right) \hat{\alpha}_{m^n-1-j}^{k}\big) \delta \beta_{k, j} \\
			&\quad  + \rho_{2,k} \sum_{j=m^{n-1}}^{m^n-1}\big(\hat{\alpha}_{m^n-j}^{k} b_{2}+\left(a_{21} b_{1}-a_{11} b_{2}\right) \hat{\alpha}_{m^n-1-j}^{k}\big) \delta \beta_{k, j}\Big)\sqrt{\eta_{k}}
		\end{align*}
		and
		\begin{align*}
			B^{M,\tau}_{m^n,\rho}&:= \sum^M_{k=1}  \Big(\rho_{1,k}\sum_{j=0}^{m^{n-1}-1} \big(- \hat{\alpha}_{m^n-2-j}^{k} b_{1}+\left(a_{11} b_{1}+a_{12} b_{2}\right) \hat{\alpha}_{m^n-1-j}^{k}\big) \delta \beta_{k, j} \\
			&\quad  + \rho_{2,k} \sum_{j=0}^{m^{n-1}-1}\big(\hat{\alpha}_{m^n-j}^{k} b_{2}+\left(a_{21} b_{1}-a_{11} b_{2}\right) \hat{\alpha}_{m^n-1-j}^{k}\big) \delta \beta_{k, j}\Big)\sqrt{\eta_{k}}
		\end{align*} 
		are independent Gaussian random variables for any given $n$. Note that  $\{A^{M,\tau}_{m^n,\rho}\}^{\infty}_{n=1}$ is a martingale difference series satisfying
		%Since $A^{M,\tau}_{m^n,\rho}$ is a Gaussian random variable with $\mathbb{E}[A^{M,\tau}_{m^n,\rho}]=0$	and
		\begin{align*} %\label{VarAmlambda12}
			%	\begin{aligned}
				\operatorname{Var}\big(A^{M,\tau}_{m^n,\rho}\big) 
				&= \sum^M_{k=1} \sum_{j=m^{n-1}}^{m^{n}-1}\Big(\rho^2_{1,k}\big[- \hat{\alpha}_{m^n-2-j}^{k} b_{1}+(a_{11} b_{1}+a_{12} b_{2}) \hat{\alpha}_{m^n-1-j}^{k}\big]^{2}  \\
				&\quad +\rho^2_{2,k} \big[\hat{\alpha}_{m^n-j}^{k} b_{2}+\left(a_{21} b_{1}-a_{11} b_{2}\right) \hat{\alpha}_{m^n-1-j}^{k}\big]^{2} \\
				&\quad +\rho_{1,k}\rho_{2,k}\big[- \hat{\alpha}_{m^n-2-j}^{k} b_{1}+\left(a_{11} b_{1}+a_{12} b_{2}\right) \hat{\alpha}_{m^n-1-j}^{k}\big] \times\\
				&\quad\big[\hat{\alpha}_{m^n-j}^{k} b_{2}+\left(a_{21} b_{1}-a_{11} b_{2}\right) \hat{\alpha}_{m^n-1-j}^{k}\big] \Big) \eta_{k} \tau \\
				&=  \frac{\Delta t_{m^n}}{2} \phi^{M,\tau}(\rho) + \mathcal{J}^{M,\tau,\rho}(\Delta m^n),
				%	\end{aligned}
		\end{align*}
		where $\Delta m^n := m^n-m^{n-1}, \Delta t_{m^n} := t_{m^n}-t_{m^{n-1}} = \Delta m^n \tau$, and
		{\small
			\begin{align*}
				\mathcal{J}^{M,\tau,\rho}(\Delta m^n) 
				&:= \sum^M_{k=1} \bigg[\Big( \rho^2_{1,k}\Big[-\frac{b_{1}^{2}+(a_{11} b_{1}+a_{12} b_{2})^{2}}{\sin^2 (\theta_k)} [1+\psi_{1,k}(\Delta m^n)] -b_1^2 \\ 
				&\quad +\frac{b_{1}\left(a_{11} b_{1}+a_{12} b_{2}\right)}{\sin^2 (\theta_k)} [2\cos (\theta_k) +\psi_{2,k}(\Delta m^n) ]
				+(a_{11} b_{1}+a_{12} b_{2})^2 \big(\frac{\sin ( \Delta m^n\theta_k)}{\sin (\theta_k)}\big)^2 \Big]\\
				&\quad +\rho^2_{2,k}\Big[-\frac{b_{2}^{2}+(a_{21} b_{1}-a_{11} b_{2})^{2}}{\sin^2 (\theta_k)} \big[\frac{1}{2}+\psi_{1,k}(\Delta m^n+1)\big]  +b_{2}^2 \frac{\sin^2 ( (\Delta m^n+1)\theta_k)}{\sin^2 (\theta_k)} \\
				&\quad  -\frac{b_{2}\left(a_{21} b_{1}-a_{11} b_{2}\right)}{\sin^2 (\theta_k)}\big[\cos(\theta_k)+\psi_{2,k}(\Delta m^n+1)\big] \Big] \\
				&\quad + \rho_{1,k}\rho_{2,k} \Big[\frac{b_1 b_2}{\sin^2 (\theta_{k})}\big[\frac{\cos(2\theta_k)}{2}+\psi_{1,k}(\Delta m^n+1)-\sin^2(\theta_{k})\big]\\
				&\quad+\frac{b_1\left(a_{21} b_{1}-a_{11} b_{2}\right)-b_2(a_{11} b_{1}+a_{12} b_{2})}{2\sin^2 (\theta_{k})}(2\cos(\theta_{k})+\psi_{2,k}(\Delta m^n)) \\
				&\quad+ b_2(a_{11} b_{1}+a_{12} b_{2})(\frac{\sin((\Delta m^n+1)\theta_{k})\sin(\Delta m^n\theta_{k})}{\sin^2 (\theta_{k})})\\
				&\quad -\frac{\left(a_{21} b_{1}-a_{11} b_{2}\right)(a_{11} b_{1}+a_{12} b_{2})}{\sin^2(\theta_{k})}\big[\frac{1}{2}+\psi_{1,k}(\Delta m^n +1)\big] \Big] \Big)\eta_k \tau \bigg].
		\end{align*} }
		Here, we denote
		$
		\psi_{1,k}(n) := \frac{\sin ((2n-3) \theta_k)-\sin (\theta_k)}{4 \sin (\theta_k)}, 
		\psi_{2,k}(n) := \frac{\sin (2 (n-1)\theta_k)-\sin  (2\theta_k)}{2 \sin (\theta_k)},
		$
		and use
		%	It follows from (\ref{sumOfsincos}) that 
		\begin{equation} \label{SumOfAlphaSymplectic}
			\begin{aligned}
				\sum_{j=0}^{n-2} (\hat{\alpha}^k_{j})^2 &=\frac{1}{\sin ^{2}(\theta_k)}\big(\frac{n-2}{2}-\frac{\sin ((2 n-3) \theta_k)-\sin (\theta_k)}{4 \sin (\theta_k)}\big), \\
				2 \sum_{j=1}^{n-1} \hat{\alpha}^k_{j} \hat{\alpha}^k_{j-1} &=\frac{1}{\sin ^{2}(\theta_k)}\big((n-2) \cos (\theta_k)-\frac{\sin (2 (n-1) \theta_k)-\sin (2 \theta_k)}{2 \sin (\theta_k)}\big)
			\end{aligned}
		\end{equation}
		for the stochastic symplectic method.
		% For the sake of simplicity, we denote $\Delta m^n := m^n-m^{n-1}$ and
		\begin{comment}
			\begin{align*}
				\psi_{1,k}(n) := \frac{\sin ((2n-3) \theta_k)-\sin (\theta_k)}{4 \sin (\theta_k)}, \quad
				\psi_{2,k}(n) := \frac{\sin (2 (n-1)\theta_k)-\sin  (2\theta_k)}{2 \sin (\theta_k)}.
			\end{align*}
		\end{comment}
		Since $|\sin(\theta_{k})|\leq 1$ and $|\cos(\theta_{k})|\leq 1$, we can prove that $|\mathcal{J}^{M,\tau,\rho}(\Delta m^n)| \leq \mathcal{J}^{M,\tau}_0$ with some constant $\mathcal{J}^{M,\tau}_0$ independent of $n$.
		Define $C^{M,\tau}_{m^n,\rho}:=\big(\frac{\Delta t_{m^n}}{\beta} \phi^{M,\tau}(\rho) \log \log t_{m^n}\big)^{\frac{1}{2}}$.  Then for any $\epsilon_1\ \in(0,\frac{3}{4}]$, there exists a positive integer  $N_0:=N_0(\epsilon_1,m,\rho)=\max\Big\{\big\lceil \frac{\log (\frac{4\mathcal{J}^{M,\tau}_0}{\phi^{M,\tau} \epsilon_1})}{\log m}\big\rceil, \big\lceil \frac{e^{2\beta}}{\log m} \big\rceil, \big\lceil \frac{\log(e/\tau)}{\log m} \big\rceil\Big\}$ such that for all $n > N_0$, 
		\begin{align*}
			\mathbb{P}\big\{A^{M,\tau}_{m^n,\rho}>C^{M,\tau}_{m^n,\rho}\big\}
			%		&\geq \frac{1}{\sqrt{2 \pi}}\frac{1}{\frac{C^{M,\tau}_{m^n,\rho}}{\sqrt{\operatorname{Var}(A^{M,\tau}_{m^n,\rho})}}+\frac{\sqrt{\operatorname{Var}\left(A^{M,\tau}_{m^n,\rho}\right)}}{C^{M,\tau}_{m^n,\rho}}} \exp\Big\{-\frac{\log \log t_n }{\beta(1-\epsilon_1)}\Big\} \\
			%	&\geq \frac{1}{\sqrt{2 \pi}} \frac{1}{3\sqrt{\frac{2}{\beta} \log \log t_{n}}} \exp\Big\{-\frac{\log \left( n \log m \right)}{\beta(1-\epsilon_1)}\Big\} \\
			&\geq \frac{1}{\sqrt{2 \pi}} \frac{1}{3\sqrt{\frac{2}{\beta} \left(\log n + \log \log m\right)}} \left(\log m \right)^{-\frac{1}{\beta(1-\epsilon_1)}} n^{-\frac{1}{\beta(1-\epsilon_1)}}.
		\end{align*}
		%		\end{equation}
	Taking $\epsilon_1 =\frac{1-1/\beta}{2} \leq \frac{1}{4}$ and then using the Borel--Cantelli lemma, we obtain 
	\begin{comment}
		then $\beta\left(1-\epsilon_1\right) = \frac{\beta+1}{2}>1$. 
		Hence, we have $\sum_{n=1}^{\infty} n^{-\frac{1}{\beta\left(1-\epsilon_1\right)}}=\infty$, which leads to
		\begin{equation} \nonumber
			\sum_{n=1}^{\infty} \mathbb{P}\big\{A^{M,\tau}_{m^n,\rho}>C^{M,\tau}_{m^n,\rho}\big\}=\Big(\sum_{n=1}^{N_0}+\sum_{n=N_0+1}^{\infty}\Big) \mathbb{P}\big\{A^{M,\tau}_{m^n,\rho}>C^{M,\tau}_{m^n,\rho}\big\} = \infty .
		\end{equation}
		Using the Borel--Cantelli lemma, we obtain 
	\end{comment}
	\begin{equation} \label{BCLemmaResultAnDiscrete}
		\mathbb{P}\Big\{\big\{ A^{M,\tau}_{m^n,\rho}>C^{M,\tau}_{m^n,\rho} \big\}\quad i.o.\Big\}=1.
	\end{equation}
	By Proposition \ref{DiscreteRoughBoundSch}, we derive 
	%		\begin{equation}\label{LimsupOfBnDiscrete}
		\begin{align*}
			&\quad \limsup _{n \rightarrow \infty} \frac{ |B^{M,\tau}_{m^n,\rho}|}{\sqrt{ t_{m^{n-1}} \log \log t_{m^{n-1}}}} \\
			%			&=\limsup _{n \rightarrow \infty} \sum^{M}_{k=1} \frac{\Big| \big(\rho_{1,k} (\sin \left(m^n \theta\right) \tilde{M}^{\tau}_{1,k}(m^{n-1})-\cos(m^n\theta) \tilde{M}^{\tau}_{2,k}(m^{n-1})) +\rho_{2,k} (\sin \left(m^n \theta\right) \tilde{M}^{\tau}_{3,k}(m^{n-1})-\cos(m^n\theta) \tilde{M}^{\tau}_{4,k}(m^{n-1})) \big)\Big| }{\sqrt{ t_{m^{n-1}} \log \log t_{m^{n-1}}}} \\
			&\leq \sum^{M}_{k=1} \Big(|\rho_{1,k}| \limsup _{n \rightarrow \infty}\Big( \frac{ \big| \sin \left(m^n \theta_k\right) \tilde{M}^{\tau}_{1,k}(m^{n-1})-\cos(m^n\theta_k) \tilde{M}^{\tau}_{2,k}(m^{n-1}) \big|}{\sqrt{ t_{m^{n-1}} \log \log t_{m^{n-1}}}}\Big) \\
			&\quad +|\rho_{2,k}|\limsup _{n \rightarrow \infty} \Big( \frac{\big| \sin \left(m^n \theta_k\right) \tilde{M}^{\tau}_{3,k}(m^{n-1})-\cos(m^n\theta_k) \tilde{M}^{\tau}_{4,k}(m^{n-1})\big| }{\sqrt{ t_{m^{n-1}}\log \log t_{m^{n-1}}}}\Big)  \Big) \sqrt{\eta_{k}} \\
			&\leq \sum^{M}_{k=1} 2\sqrt{\eta_{k}}\big(|\rho_{1,k}|\sqrt{\xi_{k,1,\tau}}+|\rho_{2,k}|\sqrt{\xi_{k,2,\tau}}\big) \\% \quad a.s. 
			&\leq 2 \big(\sum^{M}_{k=1} (|\rho_{1,k}|\sqrt{\xi_{k,1,\tau}}+|\rho_{2,k}|\sqrt{\xi_{k,2,\tau}})^2 \big)^{\frac{1}{2}}\big(\sum^{M}_{k=1} \eta_{k}\big)^{\frac{1}{2}} 
			=: 2C_0^{M,\tau} \sqrt{\operatorname{tr}(Q)} \quad a.s.,
		\end{align*}
		%		\end{equation}
	where $C_0^{M,\tau}$ is some constant independent of $n$. 
	Therefore,
	\begin{equation}\label{LiminfOfBnDiscrete}
		\begin{aligned}
			\liminf _{n \rightarrow \infty} \frac{B^{M,\tau}_{m^n,\rho}}{\sqrt{t_{m^{n-1}} \log \log t_{m^{n-1}}}} 
			&\geq - 2C_0^{M,\tau}\sqrt{\operatorname{tr}(Q)} \quad a.s.  
		\end{aligned}
	\end{equation}
	Combining \eqref{BCLemmaResultAnDiscrete} and \eqref{LiminfOfBnDiscrete}, and using the same procedure as that for \eqref{XtsupGeneral} yield
	\begin{equation} \nonumber
		\begin{aligned}
			\limsup _{n \rightarrow \infty} \frac{\left\|u^{M,\tau}(t_n)\right\|_{\mathbb{R}}}{\sqrt{ t_n \log \log t_n}} 
			&\geq  \alpha \sup_{\sum^M_{k=1}  (\rho_{1,k}^2+ \rho_{2,k}^2)=1} \sqrt{\phi^{M,\tau}(\rho)} \quad a.s. 
		\end{aligned}  
	\end{equation}
	
	\textit{Step 2: Upper bound of $\limsup_{n \rightarrow \infty}\frac{\left\|u^{M,\tau}(t)\right\|_{\mathbb{R}}}{\sqrt{ t_n \log \log t_n}}$}. \par
	Let $ m \in (1,2]$. Define $f^{M,\tau}(t_{m^{n}},\rho) := \sqrt{m^2 \phi^{M,\tau}(\rho) t_{m^{n}} \log \log t_{m^{n}}}.$ 
	By the Burkholder--Davis--Gundy inequality, we obtain %and the Borel--Cantelli lemma, we prove that
	%		\begin{equation}
		\begin{align*}
			&\quad \mathbb{E}\Big[\sup_{ r \in \{1,2,\ldots,m^{n+1}\}} G^{M,\tau}_{\rho}(r) \Big] \\
			%		&\leq \mathbb{E}\Big[\sup_{r \in \{1,2,\ldots,m^n\}} \sum^{M}_{k=1} \Big(\big|\rho_{1,k}(\sin \left(r \theta_k\right) \tilde{M}^{\tau}_{1,k}(r)-\cos(r\theta_k) \tilde{M}^{\tau}_{2,k}(r)) \big| \\
			%		&\quad +\big|\rho_{2,k}(\sin \left(r \theta_k\right) \tilde{M}^{\tau}_{3,k}(r)-\cos(r\theta_k) \tilde{M}^{\tau}_{4,k}(r))\big| \Big) \sqrt{\eta_{k}} \Big]\\
			%		&\leq \sum^{M}_{k=1}\rho_{1,k} \Big(\mathbb{E}\Big[\sup_{ r \in \{1,2,\ldots,m^n\}}   \big|\tilde{M}^{\tau}_{1,k}(r)\big| \Big]+ \mathbb{E}\Big[\sup_{ r \in \{1,2,\ldots,m^n\}} \big|\tilde{M}^{\tau}_{2,k}(r)\big| \Big] \Big) \sqrt{\eta_k}\\
			%		&\quad +\sum^{M}_{k=1}\rho_{2,k} \Big(\mathbb{E}\Big[\sup_{ r \in \{1,2,\ldots,m^n\}} \big|\tilde{M}^{\tau}_{3,k}(r)\big| \Big]+ \mathbb{E}\Big[\sup_{ r \in \{1,2,\ldots,m^n\}} \big|\tilde{M}^{\tau}_{4,k}(r)\big| \Big] \Big) \sqrt{\eta_k}\\
			&\leq \sum^{M}_{k=1} \rho_{1,k}C\left( \mathbb{E}\Big[\big(\langle \tilde{M}^{\tau}_{1,k} \rangle(m^{n+1}) \big)^{\frac{1}{2}}\Big] + \mathbb{E}\Big[\big(\langle \tilde{M}^{\tau}_{2,k} \rangle(m^{n+1}) \big)^{\frac{1}{2}}\Big] \right)\sqrt{\eta_k} \\
			&\quad +\sum^{M}_{k=1}\rho_{2,k}C \left( \mathbb{E}\Big[\big(\langle \tilde{M}^{\tau}_{3,k} \rangle(m^{n+1}) \big)^{\frac{1}{2}}\Big] + \mathbb{E}\Big[\big(\langle \tilde{M}^{\tau}_{4,k} \rangle(m^{n+1}) \big)^{\frac{1}{2}}\Big] \right)\sqrt{\eta_k} \\
			&\leq  \sum^{M}_{k=1} 2\sqrt{\eta_{k}}C \Big(\rho_{1,k}  \big(\frac{\xi_{k,1,\tau}}{2}
			t_{m^{n+1}} +|\mathcal{K}_{1}(k,m^{n+1},\tau)| \big)^{\frac{1}{2}})+\rho_{2,k} \big(\frac{\xi_{k,2,\tau}}{2}
			t_{m^{n+1}} +|\mathcal{K}_{2}(k,m^{n+1},\tau)| \big)^{\frac{1}{2}}\Big).
			%	&\leq 2C \sqrt{\xi}\sqrt{t_{m^n}}.
		\end{align*}
		%		\end{equation}
	Notice that
	\begin{align*}
		|\mathcal{K}_1(k,n,\tau)| 
		&\leq \frac{1}{4\sin^2 (\theta_k)}\Big(\frac{1}{\sin (\theta_k)}+1\Big)\Big(b_1^2+\left(a_{11} b_{1}+a_{12} b_{2}\right)^2+2|b_{1}\left(a_{11} b_{1}+a_{12} b_{2}\right)|\Big)\tau\\
		&\quad +\frac{1}{\sin^2 (\theta_k)}|\big(-b_1 \cos(\theta_k)+\left(a_{11} b_{1}+a_{12} b_{2}\right)\big)b_1|\tau =: \mathcal{K}_{1,0}(k,\tau), \\
		|\mathcal{K}_2(k,n,\tau)| 
		&\leq\frac{1}{4\sin^2 (\theta_k)}\Big(\frac{1}{\sin (\theta_k)}+1\Big)\Big(b_2^2+\left(a_{21} b_{1}-a_{11} b_{2}\right)^2+2|b_{2}(a_{21} b_{1}-a_{11} b_{2})|\Big)\tau\\
		&\quad +\frac{1}{\sin^2 (\theta_k)}|\big(b_2\cos(\theta_k)+a_{21} b_{1}-a_{11} b_{2}\big) b_2|\tau =: \mathcal{K}_{2,0}(k,\tau).			
	\end{align*}
	Define $\mathcal{K}_{0}(k,\tau) := \max \{\mathcal{K}_{1,0}(k,\tau),\mathcal{K}_{2,0}(k,\tau)\}$.
	Then we have that for all
	$n>N_1:=N_1(m,\tau)=\max \big\{ 0, \big\lceil  \frac{\log (\frac{2\mathcal{K}_0}{\tau\xi_{k,j,\tau}})}{\log m}-1 \big\rceil: k=1,2,\ldots,M, j=1,2 \big\}$,
	\begin{align*}
		\mathbb{E}\Big[\sup_{ r \in \{1,2,\ldots,m^{n+1}\}} G^{M,\tau}_{\rho}(r) \Big] 
		&\leq  2C\sum^{M}_{k=1} \sqrt{\eta_{k}} \Big(\rho_{1,k} \sqrt{ \xi_{k,1,\tau} t_{m^{n+1}}}+\rho_{2,k}\sqrt{ \xi_{k,2,\tau} t_{m^{n+1}}} \Big)=:C_1\sqrt{t_{m^{n+1}}}.
	\end{align*}
	Similar to the proof of \eqref{N2estimateGeneral}, for all $n>N_2:=N_2(m,\rho,\tau)=\max \Big\{ \Big\lceil \frac{e^{\frac{C_1^2}{m \phi^{M,\tau}}}}{\log m} \Big\rceil, N_1 \Big\}$, we arrive at
	\begin{equation} \label{EGbound}
		\begin{aligned}
			\mathbb{E}\Big[\sup_{ r \in \{1,2,\ldots,m^{n+1}\}} G^{M,\tau}_{\rho}(r) \Big] \leq C_1 \sqrt{t_{m^{n+1}}} \leq f^{M,\tau}(t_{m^{n}},\rho).
		\end{aligned}
	\end{equation}
	%Similar to \eqref{VarAmlambda12}, 
	Moreover, we have
	\begin{equation} \label{sigmabound}
		\begin{aligned}
			\tilde{\sigma}^2_{t_{m^{n+1}},M,\rho} :=\sup_{r \in \{1,2,\ldots,m^{n+1}\}}\mathbb{E}\Big[ ( G^{M,\tau}_{\rho}(r) )^2 \Big] \leq \frac{1}{2}\phi^{M,\tau}(\rho)t_{m^{n+1}} +\mathcal{J}^{M,\tau}_0.
		\end{aligned}
	\end{equation}
	Applying the Borell--TIS inequality to $G^{M,\tau}_{\rho}(r)$ with $r\in\{1,2,\ldots,m^n\}$ and combining \eqref{EGbound} and \eqref{sigmabound}, it yields that for any given $\epsilon_2>0$, when $n> N_3 := N_3(\epsilon_2,m,\rho) = \max\Big\{N_2,\Big\lceil \frac{\log \big(\frac{2\mathcal{J}_0}{\epsilon_2 \phi^{M,\tau}}\big)}{\log m}-1 \Big\rceil, \Big\lceil \frac{e^{\big(\frac{4C_1^2}{\epsilon_2^2 m\phi^{M,\tau}}\big)}}{\log m} \Big\rceil\Big\}$,
	\begin{align*}
		&\quad
		\mathbb{P}\Big\{\sup_{r\in\{1,2,\ldots,m^{n+1}\}} G^{M,\tau}_{\rho}(r)  \geq f^{M,\tau}(t_{m^{n}},\rho) \Big\} \\
		%			&= \mathbb{P}\Big\{\sup_{r\in \{1,2,\ldots,m^n\}} G^{M,\tau}_{\rho}(r)  - \mathbb{E}\Big[\sup_{r\in \{1,2,\ldots,m^n\}} G^{M,\tau}_{\rho}(r)  \Big] \geq f^{M,\tau}(t_{m^{n-1}},\rho)-\mathbb{E}\Big[\sup_{r\in \{1,2,\ldots,m^n\}} G^{M,\tau}_{\rho}(r)  \Big] \Big\}  \\
		&\leq \exp\Big\{ - \frac{\big(f^{M,\tau}(t_{m^{n}},\rho)-\mathbb{E}\Big[\sup_{r\in \{1,2,\ldots,m^{n+1}\}} G^{M,\tau}_{\rho}(r)  \Big]\big)^2}{2\tilde{\sigma}^2_{t_{m^{n+1}},M,\rho}  }\Big\} \\
		&\leq \exp\Big\{ - \frac{ (1-\epsilon_2 )\phi^{M,\tau}(\rho) m^2 t_{m^{n}} \log \log t_{m^{n}}  }{ (1+\epsilon_2)\phi^{M,\tau}(\rho) m t_{m^{n}}}\Big\} 
		%&=\big( (n-1)\log m + \log \tau\big)^{ -m (\frac{1-\epsilon_2}{1+\epsilon_2} )} 
		%				=\big( \big(n-1+\frac{\log \tau}{\log m}\big)\log m \big)^{ -m (\frac{1-\epsilon_1}{1+\epsilon_1} )} \\
		= (\log m)^{ -m(\frac{1-\epsilon_2}{1+\epsilon_2} )} \big(n+\frac{\log \tau}{\log m}\big)^{ -m(\frac{1-\epsilon_2}{1+\epsilon_2} )}.
	\end{align*}
	%	holds for all $n> N_3 := N_3(\epsilon_2,m,\rho) = \max\Big\{N_2,\Big\lceil \frac{e^{\big(\frac{16C_1^2}{\epsilon_2^2 m\phi^{M,\tau}}\big)}}{\log m} +1 \Big\rceil, \Big\lceil \frac{\log \big(\frac{2\mathcal{J}_0}{\epsilon_2 \phi^{M,\tau}}\big)}{\log m} \Big\rceil\Big\}$. 
	%	Taking $\epsilon_2 < \frac{1-\frac{1}{m}}{1+\frac{1}{m}}$ to ensure $m(\frac{1-\epsilon_2}{1+\epsilon_2}) >1$, 
	\begin{comment}
		we derive
		%		\begin{equation}
			\begin{align*}
				\sum^\infty_{n=1} \mathbb{P}\Big\{\sup_{r\in \{1,2,\ldots,m^n\}} G^{M,\tau}_{\rho}(r)  \geq f^{M,\tau}(t_{m^{n-1}},\rho) \bigg\}  &=  \Big(\sum^{N_4}_{n=1}+ \sum^\infty_{n=N_4+1} \Big) \mathbb{P}\Big\{\sup_{r\in \{1,2,\ldots,m^n\}} G^{M,\tau}_{\rho}(r)  \geq f^{M,\tau}(t_{m^{n-1}},\rho)  \Big\} <\infty.
			\end{align*}
			%		\end{equation} 
		%	where $\mathcal{J}^{M,\tau,\rho}_0(m^n)$ is given in 
	\end{comment}
	Using the Borel--Cantelli lemma, similar to the proof of \eqref{Wsincosupper1General}, we obtain
	\begin{comment}
		\begin{equation} \nonumber
			\begin{aligned}
				\mathbb{P}\Big\{\big\{\sup_{r\in \{1,2,\ldots,m^n\}} G^{M,\tau}_{\rho}(r)  \geq f^{M,\tau}(t_{m^{n-1}},\rho) \big\} \quad i.o. \Big\}=0.
			\end{aligned}
		\end{equation}
		Similar to the proof of \eqref{Wsincosupper1General}, we obtain
	\end{comment}
	$
	\limsup _{n \rightarrow \infty} \frac{G^{M,\tau}_{\rho}(n)  }{\sqrt{ t_n \log\log t_n}} \leq \sqrt{\phi^{M,\tau}(\rho)} \; a.s.
	$
	Similarly, we have
	$
	\limsup _{n \rightarrow \infty} \frac{-G^{M,\tau}_{\rho}(n)  }{\sqrt{ t_n \log\log t_n}} \leq \sqrt{\phi^{M,\tau}(\rho)} \; a.s.
	$
	This leads to
	$$
	\limsup _{n \rightarrow \infty} \frac{\big|G^{M,\tau}_{\rho}(n)  \big|}{\sqrt{ t_n \log \log t_n}} 
	\leq \sqrt{\phi^{M,\tau}(\rho)} \; a.s. 
	$$
	Therefore, for a.s. $\omega \in \Omega$, by taking $\rho_0(\omega)$ such that $|G^{M,\tau}_{\rho_0}(n,\omega)| = 	\sup\limits_{\sum^M_{k=1}  (\rho_{1,k}^2+ \rho_{2,k}^2)=1} |G^{M,\tau}_{\rho}(n,\omega) |,$
	we have
	\begin{align*}
		&\quad \limsup _{n \rightarrow \infty} \frac{(\left\|X^{M,\tau}(t_n)\right\|^2_{\mathbb{R}}+\left\|Y^{M,\tau}(t_n)\right\|^2_{\mathbb{R}})^\frac{1}{2}}{\sqrt{ t_n \log \log t_n}} \\
		&= \limsup _{n \rightarrow \infty} \frac{\left\|u^{M,\tau}(t_n)\right\|_{\mathbb{R}}}{\sqrt{ t_n \log \log t_n}} 
		= \limsup _{n \rightarrow \infty} \frac{|G^{M,\tau}_{\rho_0}(n,\omega)|}{\sqrt{ t_n \log \log t_n}}
		\leq   \sup_{\sum^M_{k=1}  (\rho_{1,k}^2+ \rho_{2,k}^2)=1} \sqrt{\phi^{M,\tau}(\rho)} \quad a.s. 
	\end{align*}
	Combining \textit{Step 1} and \textit{Step 2},  %with the fact that
	%		\begin{equation} \nonumber
		%			\begin{aligned}
			%				\sup_{\sum^M_{k=1}\left(\rho_{1,k}^2 +\rho_{2,k}^2 \right) = 1} \sqrt{\phi^{M,\tau}(\rho)} = \max_{k\in\{1,2,\ldots,M\}}\sqrt{\xi_k\eta_k} ,
			%			\end{aligned}  
		%		\end{equation}
	%		with $\xi_{k,\tau} := \max\left\{\xi_{k,1,\tau},\xi_{k,2,\tau}\right\}$,
	we finish the proof of \eqref{XYLIL}. 
	
	Now we give proofs of \eqref{XLIL} and \eqref{YLIL}. Define $G^{M,\tau,1}_{\rho} (n) :=\sum^M_{k=1} \sqrt{\eta_k}\rho_{1,k}G^{k,M,\tau}_1(n)$ and  $G^{M,\tau,2}_{\rho} (n) :=\sum^M_{k=1} \sqrt{\eta_k}\rho_{2,k}G^{k,M,\tau}_2(n)$. Using the above result on $G^{M,\tau}_{\rho} (n)$ with $\{\rho_{2,k}=0: k=1,2,\ldots,M\}$ and $\{\rho_{1,k}=0: k=1,2,\ldots,M\}$,  respectively, and noticing the relation holds as $ \sup_{\sum^M_{k=1} \rho^2_{j,k}=1} \sqrt{\sum^M_{k=1} \rho^2_{j,k} \xi_{k,j,\tau}\eta_{k}} = \max_{k\in\{1,2,\ldots,M\}} \sqrt{\xi_{k,j,\tau}\eta_{k}}$, $j=1,2$, we finish the proof.
	\begin{comment}
		we prove that
		\begin{align*}
			\limsup _{n \rightarrow \infty} \frac{\left\|X^{M,\tau}(t_n)\right\|_{\mathbb{R}}}{\sqrt{ t_n \log \log t_n}} %&=\limsup_{n \rightarrow \infty} \frac{|G^{M,\tau,1}_{\rho} (n)|}{\sqrt{t_n \log \log t_n}} \\
			&= %\sup_{\sum^M_{k=1} (\rho^2_{1,k}+\rho^2_{2,k})=1} \sqrt{\phi^{M,\tau}(\rho)} =
			\sup_{\sum^M_{k=1} \rho^2_{1,k}=1} \sqrt{\sum^M_{k=1} \rho^2_{1,k} \xi_{k,1,\tau}\eta_{k}} = \max_{k\in\{1,2,\ldots,M\}} \sqrt{\xi_{k,1,\tau}\eta_{k}}, \\
			\limsup _{n \rightarrow \infty} \frac{\left\|Y^{M,\tau}(t_n)\right\|_{\mathbb{R}}}{\sqrt{ t_n \log \log t_n}}%&= \limsup_{n \rightarrow \infty} \frac{|G^{M,\tau,2}_{\rho} (n)|}{\sqrt{t_n \log \log t_n}} \\
			&= %\sup_{\sum^M_{k=1} (\rho^2_{1,k}+\rho^2_{2,k})=1} \sqrt{\phi^{M,\tau}(\rho)} =
			\sup_{\sum^M_{k=1} \rho^2_{1,k}=1} \sqrt{\sum^M_{k=1} \rho^2_{1,k} \xi_{k,2,\tau}\eta_{k}} = \max_{k\in\{1,2,\ldots,M\}} \sqrt{\xi_{k,2,\tau}\eta_{k}}, 
		\end{align*}
		which finishes the proof.
	\end{comment}
\end{proof}
\subsection{Asymptotic preservation for LILs} 
Recalling that in Theorem \ref{LILContinuousGeneral} and Theorem \ref{LILfordiscrete}, we acquire the LILs for the exact solution of the linear SHS and the numerical solution of the stochastic symplectic method. % for given $M,\tau$ for lienar SHS separately. 
This subsection is devoted to showing the asymptotic preservation of the LIL by the  stochastic symplectic method. 
% as $\tau \rightarrow0, M\to \infty$, the limit of the LIL for stochastic symplectic methods is equal to that of the exact solution case, which is called the asymptotic preservation for LIL. 
As a comparison, we also give the result on non-symplectic methods, which fail to obey the LIL.
To this end, we need the following assumption on the convergence of numerical methods. %coefficients $A,b$ in (\ref{RewriteNumercialMethodForSchrodinger}). %numerical approximation should be used to describe relations between coefficients $A,b$ in (\ref{RewriteNumercialMethodForSchrodinger}) and the step-size $\tau$.\par
%	We assume the coefficients $A,b$ satisfies the following assumption:
%\begin{assumption} \label{assumptConvergence}
\begin{assumption} \label{assumptConvergence}
	The coefficients $A,b$ of the numerical method (\ref{RewriteNumercialMethodForSchrodinger})  satisfy
	$$
	\left|a_{11}-1\right|+\left|a_{22}-1\right|+\left|a_{12}-\tau\right|+\left|a_{21}+\tau\right|=\mathcal{O}\left(\tau^2\right) \quad \text{and} \quad   \left|b_1-\alpha_1\right|+\left|b_2-\alpha_2\right|=\mathcal{O}(\tau).
	$$
\end{assumption}
%\end{assumption}
It follows from 
\begin{comment}
	\cite[Theorem 4.1]{chen2021asymptotically} that Assumption \ref{assumptConvergence} is equivalent to  
	\begin{equation} \label{AAEM}
		\left\| A-A^{EM} \right\|_F =\mathcal{O}(\tau^2) , \quad \left\| b-b^{EM} \right\|_2 =\mathcal{O}(\tau),
	\end{equation}
	where $A^{EM}$ and $b^{EM}$ are used to denote the coefficients for the Euler--Maruyama method in (\ref{RewriteNumercialMethodForSchrodinger}). 
\end{comment}
\cite{chen2021asymptotically}  that Assumption \ref{assumptConvergence} ensures at least one order convergence of the numerical method in the mean-square sense. The main result on the %When a numerical approximation (\ref{RewriteNumercialMethodForSchrodinger}) preserves symplecticity and satisfies Assumption \ref{AssumptionforGeneralFormula} and  \ref{assumptConvergence}, the 
asymptotic preservation of the LILs by stochastic symplectic methods is stated as follows.
\begin{thm} Let Assumptions \ref{AssumptionforGeneralFormula} and \ref{assumptConvergence} hold. For the stochastic symplectic  method, %and satisfies Assumptions \ref{AssumptionforGeneralFormula} and \ref{assumptConvergence}, 
	the LILs for $\mathscr{X}(t)$ are  asymptotically preserved, i.e., \label{Asymptoticpreservation}
	\begin{align}
		&\lim_{M\rightarrow \infty} \lim _{\tau \rightarrow 0}\limsup_{n \rightarrow \infty} \frac{\left\|X^{M,\tau}(t_n)\right\|_{\mathbb{R}}}{\sqrt{ t_n \log \log t_n}}
		= \sqrt{\alpha_1^2+\alpha_2^2} \sup_k \sqrt{ \eta_k} \quad a.s., \label{XM} \\
		&\lim_{M\rightarrow \infty} \lim _{\tau \rightarrow 0}\limsup_{n \rightarrow \infty} \frac{\left\|Y^{M,\tau}(t_n)\right\|_{\mathbb{R}}}{\sqrt{ t_n \log \log t_n}}
		= \sqrt{\alpha_1^2+\alpha_2^2} \sup_k \sqrt{ \eta_k} \quad a.s.,\label{YM} \\
		&\lim_{M\rightarrow \infty} \lim _{\tau \rightarrow 0}\limsup_{n \rightarrow \infty} \frac{(\left\|X^{M,\tau}(t_n)\right\|^2_{\mathbb{R}}+\left\|Y^{M,\tau}(t_n)\right\|^2_{\mathbb{R}})^{\frac{1}{2}}}{\sqrt{ t_n \log \log t_n}}
		= \sqrt{\alpha_1^2+\alpha_2^2} \sup_k \sqrt{ \eta_k} \quad a.s. \label{XYM}
	\end{align} 
\end{thm}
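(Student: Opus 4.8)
The plan is to start from the explicit closed forms of the three LILs for the symplectic scheme furnished by Theorem~\ref{LILfordiscrete}, and to pass to the limit $\tau\to0$ and then $M\to\infty$ directly inside the coefficients $\xi_{k,1,\tau}$, $\xi_{k,2,\tau}$, $\xi_{k,3,\tau}$. The structural point that makes this legitimate is that for a fixed $M$ only the finitely many indices $k\in\{1,\dots,M\}$ enter, and that by Theorem~\ref{LILfordiscrete} each innermost $\limsup_{n\to\infty}$ equals, almost surely, an explicit deterministic expression in $M$ and $\tau$; hence, after intersecting the countably many associated full-measure events, the two outer limits become limits of deterministic sequences, and it suffices to evaluate $\lim_{\tau\to0}\xi_{k,j,\tau}$ for $j=1,2,3$ and each fixed $k$.

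I would first record the small-$\tau$ asymptotics of $\theta_k$. Since the scheme is symplectic, $\operatorname{det}(A(\lambda_k\tau))=1$, so $\cos\theta_k=\tfrac12\operatorname{tr}(A(\lambda_k\tau))$ and $\sin^2\theta_k=1-\tfrac14(\operatorname{tr}(A(\lambda_k\tau)))^2$. Feeding in Assumption~\ref{assumptConvergence} (which gives $a_{11}-1=\mathcal O(\tau^2)$, $a_{22}-1=\mathcal O(\tau^2)$, $a_{12}=\lambda_k\tau+\mathcal O(\tau^2)$, $a_{21}=-\lambda_k\tau+\mathcal O(\tau^2)$, $b_1=\alpha_1+\mathcal O(\tau)$, $b_2=\alpha_2+\mathcal O(\tau)$) together with $\operatorname{det}(A)=1$ gives $4-(\operatorname{tr}(A(\lambda_k\tau)))^2=4\lambda_k^2\tau^2+\mathcal O(\tau^3)$, hence $\theta_k=\lambda_k\tau+\mathcal O(\tau^2)$, $\sin^2\theta_k=\lambda_k^2\tau^2+\mathcal O(\tau^3)$, $1-\cos\theta_k=\tfrac12\lambda_k^2\tau^2+\mathcal O(\tau^3)$ and $1-\cos(2\theta_k)=2\lambda_k^2\tau^2+\mathcal O(\tau^3)$. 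Next, writing $q_k:=a_{11}b_1+a_{12}b_2$ and $r_k:=a_{21}b_1-a_{11}b_2$, I would use the identity $b_1^2+q_k^2-2b_1q_k\cos\theta_k=(b_1-q_k)^2+2b_1q_k(1-\cos\theta_k)$ with $b_1-q_k=b_1(1-a_{11})-a_{12}b_2=-\lambda_k\tau\alpha_2+\mathcal O(\tau^2)$ and $b_1q_k\to\alpha_1^2$ to see that the numerator of $\xi_{k,1,\tau}$ is $\lambda_k^2\tau^2(\alpha_1^2+\alpha_2^2)+o(\tau^2)$, so $\xi_{k,1,\tau}\to\alpha_1^2+\alpha_2^2$; the symmetric manipulation, using $b_2+r_k=b_2(1-a_{11})+a_{21}b_1=-\lambda_k\tau\alpha_1+\mathcal O(\tau^2)$ and $b_2r_k\to-\alpha_2^2$, gives $\xi_{k,2,\tau}\to\alpha_1^2+\alpha_2^2$ as well.

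The main obstacle is the cross coefficient $\xi_{k,3,\tau}$, which is a $0/0$ indeterminate form: its numerator and denominator both vanish to order $\tau^2$, so the limit is forced by a cancellation at the $\tau^2$ level that must be checked. I would substitute $\cos(2\theta_k)=2\cos^2\theta_k-1$ to present the numerator as a quadratic $N(c)$ in $c:=\cos\theta_k$, then compute $N(1)=(q_k-b_1)(b_2+r_k)=-\lambda_k^2\tau^2\alpha_1\alpha_2+o(\tau^2)$ and the correction $N(c)-N(1)=(c-1)\big(-2b_1b_2(c+1)+b_2q_k-b_1r_k\big)=\lambda_k^2\tau^2\alpha_1\alpha_2+o(\tau^2)$, using $c-1=-\tfrac12\lambda_k^2\tau^2+\mathcal O(\tau^3)$ and $-2b_1b_2(c+1)+b_2q_k-b_1r_k\to-2\alpha_1\alpha_2$; adding these shows $N(c)=o(\tau^2)$, hence $\xi_{k,3,\tau}\to0$.

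With these three limits in hand, for a fixed $M$ the finiteness of the sums yields $\lim_{\tau\to0}\max_{k\le M}\sqrt{\xi_{k,j,\tau}\eta_k}=\sqrt{\alpha_1^2+\alpha_2^2}\,\max_{k\le M}\sqrt{\eta_k}$ for $j=1,2$, and $\phi^{M,\tau}(\rho)\to(\alpha_1^2+\alpha_2^2)\sum_{k=1}^M(\rho_{1,k}^2+\rho_{2,k}^2)\eta_k$ uniformly over the unit sphere, so $\lim_{\tau\to0}\sup_{\sum_{k=1}^M(\rho_{1,k}^2+\rho_{2,k}^2)=1}\sqrt{\phi^{M,\tau}(\rho)}=\sqrt{\alpha_1^2+\alpha_2^2}\,\max_{k\le M}\sqrt{\eta_k}$. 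Finally, since $Q$ has finite trace, $\sup_k\eta_k$ is attained at a finite index, so $\max_{k\le M}\sqrt{\eta_k}=\sup_k\sqrt{\eta_k}$ for all large $M$; letting $M\to\infty$ in these identities gives \eqref{XM}, \eqref{YM} and \eqref{XYM}. Apart from the cancellation in $\xi_{k,3,\tau}$, the only remaining care is to keep the $\mathcal O(\cdot)$-remainders uniform in $k\in\{1,\dots,M\}$ (automatic for fixed $M$) and to fix the almost-sure exceptional set once and for all over the countably many pairs $(M,\tau)$ appearing in the iterated limit.
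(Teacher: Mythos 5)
Your proposal is correct and follows essentially the same route as the paper: both arguments pass to the limit inside the explicit constants of Theorem \ref{LILfordiscrete}, showing $\xi_{k,1,\tau},\xi_{k,2,\tau}\to\alpha_1^2+\alpha_2^2$ and $\xi_{k,3,\tau}\to 0$ for each fixed $k$, then treat the supremum over $\rho$ for fixed $M$ (your uniform convergence of $\phi^{M,\tau}$ on the unit sphere is the paper's $\epsilon$-sandwich), and finally let $M\to\infty$ via the finite trace of $Q$. The differences are cosmetic: the paper imports the asymptotics $(1-\operatorname{tr}(A)+\det(A))\sim\tau^2$ from an external lemma while you derive the behaviour of $2-\operatorname{tr}(A)$ directly from Assumption \ref{assumptConvergence} combined with $\det(A)=1$, and your $N(1)$ versus $N(c)-N(1)$ split of the cross-term numerator is an equivalent repackaging of the paper's cancellation via $c_{k,\tau,1},c_{k,\tau,2},c_{k,\tau,3}$ (the $\lambda_k$-factors you carry cancel between numerator and denominator, so the normalization discrepancy with the paper's statement of Assumption \ref{assumptConvergence} is immaterial).
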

\begin{proof}
	Combining Assumption \ref{assumptConvergence} and \cite[Lemma 4.2]{chen2021asymptotically}, we have
	$$
	\begin{aligned}
		&|a_{11}-1|+|a_{22}-1|+|a_{12}-\tau|+|a_{21}+\tau|=\mathcal{O}(\tau^2), \quad & |b_1-\alpha_1|+|b_2-\alpha_2|=\mathcal{O}(\tau), \\
		&\operatorname{tr}(A) \rightarrow 2 \quad \text{as} \quad \tau \rightarrow 0, \quad (1-\operatorname{tr}(A)+\det(A)) \sim \tau^2,
	\end{aligned}
	$$
	which leads to
	$$
	\begin{aligned}
		&\lim_{\tau\rightarrow 0}\frac{b_1-a_{11} b_1-a_{12} b_2}{\tau}=-\alpha_2, \quad \lim_{\tau\rightarrow 0} \frac{b_{1}\left(a_{11} b_{1}+a_{12} b_{2}\right)(2-\operatorname{tr}(A))}{\tau^2} =\alpha^2_1, \\
		&\lim_{\tau\rightarrow 0}\frac{b_2+a_{21} b_1-a_{11} b_2}{\tau} =-\alpha_1,  \quad \lim_{\tau\rightarrow 0} \frac{b_{2}\left(a_{21} b_{1}-a_{11} b_{2}\right)(2-\operatorname{tr}(A))}{\tau^2} =-\alpha^2_2.
	\end{aligned}
	$$
	Therefore, we obtain
	\begin{equation} \label{approxi1}
		\begin{aligned}
			\lim _{\tau \rightarrow 0} \xi_{k,1,\tau}&= \lim _{\tau \rightarrow 0} \frac{\left(b_1-\left(a_{11}b_1+a_{12}b_2\right)\right)^{2}+2 b_{1}\left(a_{11} b_{1}+a_{12} b_{2}\right)(1-\cos \theta_k)}{\left(1-\frac{\operatorname{tr}^{2}(A)}{4}\right)} \\
			&=  \lim _{\tau \rightarrow 0} \frac{4}{2+\operatorname{tr}(A)} \Big( \frac{\left(b_1-a_{11}b_1-a_{12}b_2\right)^{2}}{2-\operatorname{tr} (A)} + \frac{b_{1}\left(a_{11} b_{1}+a_{12} b_{2}\right)(2-\operatorname{tr}(A))}{2-\operatorname{tr} (A)} \Big)
			= \alpha_1^2+\alpha_2^2,
		\end{aligned}
	\end{equation}
	\begin{equation} \label{approxi2}
		\begin{aligned}
			\lim _{\tau \rightarrow 0} \xi_{k,2,\tau}&= \lim _{\tau \rightarrow 0} \frac{\left(b_2+\left(a_{21}b_1-a_{11}b_2\right)\right)^{2}+2 b_{2}\left(a_{21} b_{1}-a_{11} b_{2}\right)(\cos \theta_k -1)}{\left(1-\frac{\operatorname{tr}^{2}(A)}{4}\right)}  \\
			&=  \lim _{\tau \rightarrow 0} \frac{4}{2+\operatorname{tr}(A)} \Big( \frac{\left(b_2+a_{21}b_1-a_{11}b_2\right)^{2}}{2-\operatorname{tr} (A)} - \frac{b_{2}\left(a_{21} b_{1}-a_{11} b_{2}\right)(2-\operatorname{tr}(A))}{2-\operatorname{tr} (A)} \Big) 
			= \alpha_1^2+\alpha_2^2,
		\end{aligned}
	\end{equation} 
	which yields
	\begin{equation} \nonumber%\label{Xappro}
		\lim _{\tau \rightarrow 0}\limsup_{n \rightarrow \infty} \frac{\left\|X^{M,\tau}(t_n)\right\|_{\mathbb{R}}}{\sqrt{ t_n \log \log t_n}}
		%= \limsup_{t \rightarrow \infty} \frac{\left\|u^M\left(t\right)\right\|_{\mathbb{R}}}{\sqrt{ t \log \log t}}
		= \sqrt{\alpha_1^2+\alpha_2^2} \max_{k\in\{1,2,\ldots,M\}} \sqrt{ \eta_k} \quad a.s., 
	\end{equation}
	\begin{equation}\nonumber %\label{Yappro}
		\lim _{\tau \rightarrow 0}\limsup_{n \rightarrow \infty} \frac{\left\|Y^{M,\tau}(t_n)\right\|_{\mathbb{R}}}{\sqrt{ t_n \log \log t_n}}
		%= \limsup_{t \rightarrow \infty} \frac{\left\|u^M\left(t\right)\right\|_{\mathbb{R}}}{\sqrt{ t \log \log t}}
		= \sqrt{\alpha_1^2+\alpha_2^2} \max_{k\in\{1,2,\ldots,M\}} \sqrt{ \eta_k} \quad a.s.
	\end{equation}
	Letting $M\rightarrow\infty$ and using the fact that
	$
	\lim_{M\rightarrow \infty} \max_{k\in\{1,2,\ldots,M\}} \sqrt{\eta_k} = \sup_k  \sqrt{\eta_k}\;  a.s., 
	$
	we derive \eqref{XM} and \eqref{YM}. 
	
	For the proof of \eqref{XYM}, by calculating 
	\begin{align*}
		c_{k,\tau,1}&:= -\cos(2\theta_{k})+2a_{11}\cos(\theta_k) -a_{11}^2+a_{12}a_{21} 
		=-\frac{\operatorname{tr}^2(A)}{2}+1+a_{11}\operatorname{tr}(A)-a_{11}^2+a_{12}a_{21} \\
		&\,=\frac{\operatorname{tr}(A)}{2}(-\operatorname{tr}(A)+2a_{11})+(1+a_{11})(1-a_{11})+a_{12}a_{21} \\
		&\,=\big((2-\operatorname{tr}(A))+(1+a_{11}-2)(1-a_{11})+a_{12}a_{21}\big)(1+\mathcal{O}(\tau)) \\
		&\,= \mathcal{O}(\tau^2)+\mathcal{O}(\tau^4)-\mathcal{O}(\tau^2)=\mathcal{O}(\tau^4), \\
		c_{k,\tau,2} &:=-a_{21}\cos(\theta_k)+a_{11}a_{21}=\mathcal{O}(\tau^3), \quad
		c_{k,\tau,3} :=a_{12}\cos(\theta_k)-a_{11}a_{12}=\mathcal{O}(\tau^3),
	\end{align*}
	we arrive at
	\begin{equation} \label{approxi3}
		\begin{aligned}
			\lim _{\tau \rightarrow 0} \xi_{k,3,\tau}&=\lim _{\tau \rightarrow 0} \Big(\frac{-b_1b_2 \cos(2\theta_k) +(b_2(a_{11} b_{1}+a_{12} b_{2})-b_1\left(a_{21} b_{1}-a_{11} b_{2}\right)) \cos (\theta_k) }{\frac{1}{4}(2+\operatorname{tr}(A))(2-\operatorname{tr}(A))} \\
			&\quad\quad \quad+  \frac{(a_{11} b_{1}+a_{12} b_{2})\left(a_{21} b_{1}-a_{11} b_{2}\right)  }{\frac{1}{4}(2+\operatorname{tr}(A))(2-\operatorname{tr}(A))} \Big) \\
			&=\lim _{\tau \rightarrow 0}\frac{\alpha_1\alpha_2c_{k,\tau,1}+\alpha_1^2c_{k,\tau,2}+\alpha_2^2c_{k,\tau,3}}{2-\operatorname{tr}(A)}
			=0.
		\end{aligned}
	\end{equation} 
	Combining \eqref{approxi1}, \eqref{approxi2}, and \eqref{approxi3}, for any given $\epsilon>0$, there exists $\tau_0(\epsilon)>0$ such that for all $\tau\in (0,\tau_0(\epsilon))$, it holds that
	\begin{align*}
		\alpha_1^2+\alpha_2^2-\epsilon< \xi_{k,1,\tau}<\alpha_1^2+\alpha_2^2+\epsilon, \quad \alpha_1^2+\alpha_2^2-\epsilon< \xi_{k,2,\tau}<\alpha_1^2+\alpha_2^2+\epsilon,\quad  -\epsilon<\xi_{k,3,\tau}<\epsilon.
	\end{align*}
	Hence, for all $\tau\in (0,\tau_0(\epsilon))$, %letting $\epsilon=\frac{\epsilon_0}{\max_{k\in\{1,2,\ldots,M\}} \eta_k}$, for all $\tau\in (0,\tau_0(\epsilon))$,
	\begin{align*}
		&\quad \sup_{\sum^M_{k=1}(\rho^2_{1,k}+\rho^2_{2,k})=1}  \phi^{M,\tau}(\rho)\\
		%		&= 	\sup_{\sum^M_{k=1}(\rho^2_{1,k}+\rho^2_{2,k})=1} \sum^{M}_{k=1} \big(\xi_{k,1,\tau}\rho^2_{1,k}+\xi_{k,2,\tau}\rho^2_{2,k}+\xi_{k,3,\tau}\rho_{1,k}\rho_{2,k}\big)\eta_k  \\
		&\leq \sup_{\sum^M_{k=1}(\rho^2_{1,k}+\rho^2_{2,k})=1} \sum^{M}_{k=1} \big((\rho^2_{1,k}+\rho^2_{2,k})(\alpha_1^2+\alpha_2^2+\epsilon)+\epsilon|\rho_{1,k}\rho_{2,k}|\big)\eta_k \\
		&\leq \big(\max_{k\in\{1,2,\ldots,M\}}\eta_k\big)   \sup_{\sum^M_{k=1}(\rho^2_{1,k}+\rho^2_{2,k})=1}\big( (\alpha_1^2+\alpha_2^2+\epsilon)  \sum^{M}_{k=1} (\rho^2_{1,k}+\rho^2_{2,k})+ \sum^{M}_{k=1}\frac{\epsilon}{2}(\rho^2_{1,k}+\rho^2_{2,k}) \big)\\
		&\leq (\alpha_1^2+\alpha_2^2+\frac{3}{2}\epsilon) \max_{k\in\{1,2,\ldots,M\}} \eta_k.
	\end{align*}
	There exists $k_0 \in \{1,2,\ldots,M\}$ such that $\eta_{k_0}= \max_{k\in\{1,2,\ldots,M\}}\eta_k$.  Let $$
	\rho_{1,k}= \Big\{
	\begin{aligned}
		&1, \quad k=k_0, \\
		&0, \quad k\neq k_0,
	\end{aligned}
	$$
	and $\rho_{2,k}=0$ for all $k\in\mathbb{N}^+$. Then it follows  that for all $\tau\in (0,\tau_0(\epsilon))$,
	\begin{align*}
		\sup_{\sum^M_{k=1}(\rho^2_{1,k}+\rho^2_{2,k})=1}  \phi^{M,\tau}(\rho)
		&\geq\xi_{k_0,1,\tau}\eta_{k_0}  \geq (\alpha_1^2+\alpha_2^2-\epsilon)\max_{k\in\{1,2,\ldots,M\}}\eta_k.
	\end{align*}
	This leads to that for all $\tau\in (0,\tau_0(\epsilon))$,
	\begin{equation} \nonumber%\label{epsilondeltasup}
		(\alpha_1^2+\alpha_2^2-\epsilon)\max_{k\in\{1,2,\ldots,M\}}\eta_k \leq \sup_{\sum^M_{k=1}(\rho^2_{1,k}+\rho^2_{2,k})=1}  \phi^{M,\tau}(\rho) \leq (\alpha_1^2+\alpha_2^2+\frac{3}{2}\epsilon) \max_{k\in\{1,2,\ldots,M\}} \eta_k, 
	\end{equation} 
	which is equivalent to
	$
	\lim_{\tau\rightarrow 0}\sup_{\sum^M_{k=1}(\rho^2_{1,k}+\rho^2_{2,k})=1}  \phi^{M,\tau}(\rho)
	=(\alpha_1^2+\alpha_2^2) \max_{k\in\{1,2,\ldots,M\}}\eta_k.
	$
	Hence, we have
	\begin{equation}\nonumber %\label{uappro}
		\lim _{\tau \rightarrow 0}\limsup_{n \rightarrow \infty} \frac{\left\|u^{M,\tau}(t_n)\right\|_{\mathbb{R}}}{\sqrt{ t_n \log \log t_n}}= \sqrt{\alpha_1^2+\alpha_2^2} \max_{k\in\{1,2,\ldots,M\}} \sqrt{\eta_k} \quad  a.s. 
	\end{equation}
	Letting $M\rightarrow \infty$ completes the proof of \eqref{XYM}.
\end{proof}

If the numerical method \eqref{RewriteNumercialMethodForSchrodinger} applied to the SHS does not preserve the symplecticity, then the method is called non-symplectic method.  %the LIL cannot be obeyed under discretization.
Notice that for the case $\det(A)>1$, the upper limit of the solution scaled by polynomials of $t$ may not exist in $\mathbb{R}$. We only discuss the case $0<\det(A)<1$ here, for which we show that the upper limit of the solution scaled by $t^{\epsilon}$ with any given $\epsilon>0$ equals 0 almost surely. This is stated as follows.
\begin{thm} \label{LILfornon-symplecticSch}
	Let Assumptions \ref{AssumptionforGeneralFormula} and \ref{assumptConvergence} hold. For the non-symplectic method with $0<\det (A)<1$, %satisfying Assumptions \ref{AssumptionforGeneralFormula} and \ref{assumptConvergence}, 
	$\mathscr{X}^{M,\tau}_{\mathrm{n\text{-}sym}}(t_n) \in \{\|X^{M,\tau}(t_n)\|_{\mathbb{R}}, \|Y^{M,\tau}(t_n)\|_{\mathbb{R}}, (\|X^{M,\tau}(t_n)\|^2_{\mathbb{R}}+\|Y^{M,\tau}(t_n)\|^2_{\mathbb{R}})^{\frac{1}{2}} \}$ satisfies that %for any given $\epsilon>0$, 
	\begin{align*}
		&\limsup _{n \rightarrow \infty}\frac{\mathscr{X}^{M,\tau}_{\mathrm{n\text{-}sym}}(t_n)}{t_n^\epsilon}=0\quad a.s., \quad \forall \, \epsilon>0.
		%, \quad  
		%	\limsup _{n \rightarrow \infty}\frac{\|Y^{M,\tau}(t_n)\|_{\mathbb{R}}}{t_n^\epsilon}=0\quad a.s.,  \\
		%	&\limsup _{n \rightarrow \infty}\frac{(\|X^{M,\tau}(t_n)\|^2_{\mathbb{R}}+\|Y^{M,\tau}(t_n)\|^2_{\mathbb{R}})^{\frac{1}{2}}}{t_n^\epsilon}=0\quad a.s. 
	\end{align*}
\end{thm}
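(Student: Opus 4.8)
The plan is to exploit the geometric decay that the condition $0<\det(A)<1$ introduces into the compact form of the numerical solution. Recall that, under Assumption \ref{AssumptionforGeneralFormula}, for each $k\in\{1,\dots,M\}$ and sufficiently small $\tau$ one has $X^{k,M,\tau}_n = C^{k,M,\tau}_1(n)+\sqrt{\eta_k}\,G^{k,M,\tau}_1(n)$ and $Y^{k,M,\tau}_n = C^{k,M,\tau}_2(n)+\sqrt{\eta_k}\,G^{k,M,\tau}_2(n)$, where every term is built from the coefficients $\hat\alpha^k_m=(\det(A))^{(m-1)/2}\sin(m\theta_k)/\sin(\theta_k)$. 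Writing $q:=\max_{1\le k\le M}\det(A(\lambda_k\tau))\in(0,1)$ and noting that $M$ is fixed and $\tau$ small, so that $\min_{1\le k\le M}|\sin(\theta_k)|>0$, we get $|\hat\alpha^k_m|\le C\,q^{(m-1)/2}$ for $m\ge 1$, with $C=C(M,\tau)$. The single quantitative input that matters is that $q<1$ makes $\sum_{\ell\ge 0}q^{\ell}<\infty$.

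First I would dispose of the deterministic part. Each $C^{k,M,\tau}_j(n)$ is a fixed deterministic linear combination of $\hat\alpha^k_{n-1},\hat\alpha^k_{n}$ (and $\hat\alpha^k_{n+1}$), hence $|C^{k,M,\tau}_j(n)|\le C\,q^{(n-1)/2}\to 0$ exponentially fast, so $C^{k,M,\tau}_j(n)/t_n^{\epsilon}\to 0$ for every $\epsilon>0$. Next I would treat the discrete stochastic convolution $G^{k,M,\tau}_1(n)=\sum_{j=0}^{n-1}\big(-\hat\alpha^k_{n-2-j}b_1+(a_{11}b_1+a_{12}b_2)\hat\alpha^k_{n-1-j}\big)\delta\beta_{k,j}$. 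Since the increments $\{\delta\beta_{k,j}\}_j$ are independent, $G^{k,M,\tau}_1(n)$ is centered Gaussian with $\operatorname{Var}(G^{k,M,\tau}_1(n))=\tau\sum_{j=0}^{n-1}\big(-\hat\alpha^k_{n-2-j}b_1+(a_{11}b_1+a_{12}b_2)\hat\alpha^k_{n-1-j}\big)^2$; reindexing $\ell=n-1-j$ and using $|\hat\alpha^k_\ell|\le Cq^{(\ell-1)/2}$, this sum is a convergent geometric series plus finitely many bounded boundary terms, so $\operatorname{Var}(G^{k,M,\tau}_1(n))\le V<\infty$ uniformly in $n$; the same holds for $G^{k,M,\tau}_2(n)$.

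With the uniform variance bound in hand, the a.s. vanishing follows by a routine Borel--Cantelli argument. From the Gaussian moment identity $\mathbb{E}[|G^{k,M,\tau}_1(n)|^{2p}]\le(2p-1)!!\,V^{p}$ and Markov's inequality, $\mathbb{P}\{|G^{k,M,\tau}_1(n)|>t_n^{\epsilon}\}\le C_p\,(n\tau)^{-2p\epsilon}$, which is summable in $n$ once $p>1/(2\epsilon)$; hence a.s. $|G^{k,M,\tau}_1(n)|\le t_n^{\epsilon}$ for all large $n$, and applying this with $\epsilon$ replaced by $\epsilon/2$ gives $G^{k,M,\tau}_1(n)/t_n^{\epsilon}\to 0$ a.s. Combining with the deterministic estimate yields $X^{k,M,\tau}_n/t_n^{\epsilon}\to 0$ and $Y^{k,M,\tau}_n/t_n^{\epsilon}\to 0$ a.s. for every $k\in\{1,\dots,M\}$. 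Since $\|X^{M,\tau}(t_n)\|_{\mathbb{R}}^2=\sum_{k=1}^{M}(X^{k,M,\tau}_n)^2$ is a finite sum, we conclude $\|X^{M,\tau}(t_n)\|_{\mathbb{R}}/t_n^{\epsilon}\to 0$ a.s., and likewise for $\|Y^{M,\tau}(t_n)\|_{\mathbb{R}}$ and for $(\|X^{M,\tau}(t_n)\|_{\mathbb{R}}^2+\|Y^{M,\tau}(t_n)\|_{\mathbb{R}}^2)^{1/2}$, which is exactly the asserted $\limsup_{n\to\infty}\mathscr{X}^{M,\tau}_{\mathrm{n\text{-}sym}}(t_n)/t_n^{\epsilon}=0$ a.s.

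I expect the only genuinely delicate point to be the uniform-in-$n$ variance estimate for the discrete stochastic convolution, i.e.\ recognizing that $\det(A)<1$ converts the otherwise growing sum of squared coefficients $\sum_{j}(\hat\alpha^k_{\cdot})^2$ into a convergent geometric series; once that is observed, the remainder is standard Gaussian concentration combined with Borel--Cantelli, together with the trivial exponential decay of the deterministic part. Assumption \ref{assumptConvergence} is invoked only to keep the statement within the same framework as the other results (guaranteeing convergence of the scheme and the applicability of the compact form from \cite{chen2023large}); beyond Assumption \ref{AssumptionforGeneralFormula}, it plays no essential role in the argument.
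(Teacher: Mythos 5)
Your proposal is correct and follows essentially the same route as the paper: the condition $0<\det(A)<1$ yields a uniform-in-$n$ bound on the variance of the discrete stochastic convolution (via the geometric decay of the $\hat\alpha^k_j$), after which Gaussian concentration plus the Borel--Cantelli lemma at the intermediate scale $t_n^{\epsilon/2}$ gives the a.s.\ limit. The only cosmetic differences are that you argue componentwise and bound the tails via Gaussian moments and Markov's inequality instead of the Gaussian density tail estimate, and that you explicitly dispose of the exponentially decaying deterministic part, which the paper leaves implicit.
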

\begin{proof}
	We only give the proof for $\|X^{M,\tau}(t_n)\|_{\mathbb{R}}$ here since the proofs for $\|Y^{M,\tau}(t_n)\|_{\mathbb{R}}$ and $(\|X^{M,\tau}(t_n)\|^2_{\mathbb{R}}+\|Y^{M,\tau}(t_n)\|^2_{\mathbb{R}})^{\frac{1}{2}}$  are similar.  Define $f(t_n) := t_n^{\epsilon/2}$ with $\epsilon>0$.
	Based on $0<\det(A)<1$, we have 
	\begin{equation} \nonumber
		\begin{aligned}
			\sum_{j=0}^{n-2} (\hat{\alpha}^k_{j})^2 \leq K_{1}(\theta_k), \quad
			\big|2 \sum_{j=1}^{n-1} \hat{\alpha}^k_{j} \hat{\alpha}^k_{j-1}\big| \leq K_{2}(\theta_k),
		\end{aligned}
	\end{equation}
	which imply that
	\begin{equation} \label{non-symplecticVarM_n}
		\begin{aligned}
			&\quad \operatorname{Var}(G^{M,\tau,1}_{\rho}(n))  \\
			&=\sum_{k=0}^{M}\sum_{j=0}^{n-1} \rho^2_{1,k}\big(-b_{1} \hat{\alpha}^k_{n-2-j}+\left(a_{11} b_{1}+a_{12} b_{2}\right) \hat{\alpha}^k_{n-1-j}\big)^2 \eta_k \tau  \\
			%		&= \Big([b_{1}^{2}+\left(a_{11} b_{1}+a_{12} b_{2}\right)^{2}] \sum_{j=0}^{n-2} \hat{\alpha}_{j}^{2}-2 b_{1}\left(a_{11} b_{1}+a_{12} b_{2}\right)  \sum_{j=1}^{n-1} \hat{\alpha}_{j} \hat{\alpha}_{j-1}+\left(a_{11} b_{1}+a_{12} b_{2}\right)^{2} \hat{\alpha}_{n-1}\Big)\tau \\ 
			&\leq \sum_{k=0}^{M} %\rho^2_{1,k}
			\eta_k \tau \big([b_{1}^{2}+\left(a_{11} b_{1}+a_{12} b_{2}\right)^{2}] K_1(\theta_k)+ |b_{1}\left(a_{11} b_{1}+a_{12} b_{2}\right) |K_2(\theta_k) +\left(a_{11} b_{1}+a_{12} b_{2}\right)^{2} (\hat{\alpha}^k_{n-1})^2\big)\\
			&=: K(\theta) <\infty.
		\end{aligned}
	\end{equation}
	Since $G^{M,\tau,1}_{\rho}(n)$ is a Gaussian random variable, we obtain
	\begin{equation} \nonumber %\label{PMn>ftn}
		\begin{aligned}
			\mathbb{P}\left\{G^{M,\tau,1}_{\rho}(n) > f(t_n)\right\} &\leq \frac{1}{\sqrt{2\pi}} \frac{1}{\frac{f(t_n)}{\sqrt{\operatorname{Var}(G^{M,\tau,1}_{\rho}(n))}}} \exp\big\{ - \frac{t_n^{\epsilon}}{2\operatorname{Var}(G^{M,\tau,1}_{\rho}(n))}\big\} \\
			&\leq \frac{1}{\sqrt{2\pi}} \frac{\sqrt{K(\theta)}}{(n\tau)^{\epsilon/2}} \exp\big\{ - \frac{(n\tau)^{\epsilon}}{2K(\theta)}\big\}\leq \sqrt{\frac{K(\theta)}{2\pi \tau^{\frac{\epsilon}{2}}}} \exp\big\{ - \frac{(n\tau)^{\epsilon}}{2K(\theta)}\big\} ,
		\end{aligned}
	\end{equation}
	which leads to
	$
	\sum^{\infty}_{n=1} \mathbb{P}\left\{G^{M,\tau,1}_{\rho}(n) > f(t_n)\right\} < \infty.
	$
	By the Borel--Cantelli lemma, we arrive at
	$
	\mathbb{P}\Big\{ \big\{G^{M,\tau,1}_{\rho}(n) > f(t_n)\big\} \quad i.o.\Big\}=0,
	$
	which yields
	$
	\limsup\limits_{n \rightarrow \infty} \frac{G^{M,\tau,1}_{\rho}(n)}{t_n^{\epsilon/2}} \leq 1 \; a.s.
	$
	Therefore, it shows that		
	$
	\limsup\limits_{n \rightarrow \infty} \frac{G^{M,\tau,1}_{\rho}(n)}{t_n^{\epsilon}} = \limsup\limits_{n \rightarrow \infty} \frac{G^{M,\tau,1}_{\rho}(n)}{t_n^{\epsilon/2}} \frac{t_n^{\epsilon/2}}{t_n^{\epsilon}} 
	\leq \lim\limits_{n \rightarrow \infty} \frac{t_n^{\epsilon/2}}{t_n^{\epsilon}}
	=0 \; a.s.
	$
	Since $\operatorname{Var}(-G^{M,\tau,1}_{\rho}(n))$ has the same expression as \eqref{non-symplecticVarM_n}, we have 
	$
	\liminf\limits_{n \rightarrow \infty} \frac{G^{M,\tau,1}_{\rho}(n)}{t_n^{\epsilon}} = -\limsup\limits_{n \rightarrow \infty} \frac{-G^{M,\tau,1}_{\rho}(n)}{t_n^{\epsilon/2}} \frac{t_n^{\epsilon/2}}{t_n^{\epsilon}} 
	%\geq \lim_{n \rightarrow \infty} \frac{t_n^{\epsilon/2}}{t_n^{\epsilon}}
	\geq 0 \; a.s.
	$
	Hence, $	\lim\limits_{n \rightarrow \infty} \frac{G^{M,\tau,1}_{\rho}(n)}{t_n^{\epsilon}}=0 \; a.s.,$
	which finishes the proof.
	%which ends the proof.
\end{proof}
\section{Applications to finite-dimensional and infinite-dimensional SHSs} \label{SectionApplication}
In this section, we give the applications of theoretical results to concrete examples, including the linear stochastic oscillator and the linear stochastic Schr\"{o}dinger equation,  respectively. %, where the linear stochastic oscillator and the linear stochastic Schr\"{o}dinger equation are considered.
% Precisely, we present the applications to the linear stochastic oscillator and the linear stochastic Schr\"{o}dinger equation. %Concerning that the linear stochastic oscillator and linear stochastic Schr\"{o}dinger equation are two examples in finite-dimensional and infinite-dimensional stochastic Hamiltonian system, respectively, we take them as  test equations to apply general results in Section \ref{SectionGeneral} and \ref{SectionDiscrete}. 
%	In this section, we apply our results to linear stochastic oscillator and linear stochastic Schr\"{o}dinger equation, and obtain the LILs for both the exact solution and stochastic symplectic methods.
\subsection{LIL for linear stochastic oscillator}
%	Concerning that the linear stochastic oscillator is a typical finite-dimensional stochastic Hamiltonian system, we take it as a test equation to apply general results in Section \ref{SectionGeneral} and \ref{SectionDiscrete}. 
Consider the linear stochastic oscillator $\ddot{X}(t)+X(t)=\alpha \dot{W}(t)$ with $\alpha>0$ and $W(t)$ being a one-dimensional standard Brownian motion defined on $(\Omega,\mathcal{F},\{\mathcal{F}_t\}_{t\geq 0},\mathbb{P})$. Let $\mathbb{U}=\mathbb{R}, B(x)=x,  \alpha_1=0,\alpha_2=\alpha$, the linear stochastic oscillator can be rewritten into an SHS in the form of \eqref{GeneralSHS}, namely,
\begin{equation}
	\nonumber
	d\begin{pmatrix}
		X(t) \\
		Y(t)
	\end{pmatrix}=\begin{pmatrix}
		0 & 1 \\
		-1 & 0
	\end{pmatrix}\begin{pmatrix}
		X(t) \\
		Y(t)
	\end{pmatrix} d t+\alpha\begin{pmatrix}
		0 \\
		1
	\end{pmatrix}d W(t),\quad \begin{pmatrix}
		X(0) \\
		Y(0)
	\end{pmatrix}=\begin{pmatrix}
		X_{0} \\
		Y_{0}
	\end{pmatrix}.
\end{equation} 
%In the following content, we first prove the laws of iterated logarithm for  exact solution and numerical solution of system (\ref{2dHamiltion}) separately, and then obtain the asymptotic preservation property for LIL for numerical methods which preserve symplecticity with the previous results. 
\begin{comment}
	It is shown in \cite[Chapter 8.4]{mao2007stochastic} that the exact solution $\left(X(t), Y(t)\right)$  has the following expression 
	\begin{equation} \label{exactSolutionOscillator}
		%\left\{\begin{array}{l}
			\begin{aligned}
				X(t) &=X_{0}\cos (t)+Y_{0} \sin (t) +\alpha \int_{0}^{t} \sin (t-s) d W(s), \\
				Y(t) &=-X_{0} \sin (t)+Y_{0} \cos (t) +\alpha \int_{0}^{t} \cos (t-s) d W(s).
			\end{aligned}
			%\end{array}\right.
		\end{equation}
	\end{comment}
	By Theorems \ref{LILContinuous} and \ref{LILfordiscrete}, %with $\mathbb{U}=\mathbb{R}, B(x)=x,  \alpha_1=0,\alpha_2=\alpha$, we derive the result below.
	we have that
	%	\begin{prop}
		the exact solution $X(t)$ of the linear stochastic oscillator and the numerical solution $X_n$ of the stochastic symplectic method  obey the  LILs:
		\begin{equation} \label{LILforOscill}
			\limsup_{t \rightarrow \infty} \frac{|X(t)|}{\sqrt{ t \log \log t}}= \alpha \quad a.s., \quad \quad
			\limsup_{n \rightarrow \infty} \frac{|X_n|}{\sqrt{ t_n \log \log t_n}}= \sqrt{\xi_{\tau}} \quad a.s.,
		\end{equation}
		respectively, where $\xi_{\tau} := \frac{b_{1}^{2}+\left(a_{11} b_{1}+a_{12} b_{2}\right)^{2}-2 b_{1}\left(a_{11} b_{1}+a_{12} b_{2}\right) \cos \left(\theta\right)}{ \sin ^{2}\left(\theta\right)}$.
		Further, by Theorem \ref{Asymptoticpreservation}, we derive that the stochastic symplectic method asymptotically preserves the LIL of the exact solution:
		$$
		\lim_{\tau \rightarrow 0}\limsup_{n \rightarrow \infty} \frac{|X_n|}{\sqrt{ t_n \log \log t_n}}=\limsup_{t \rightarrow \infty} \frac{|X(t)|}{\sqrt{ t \log \log t}}= \alpha \quad a.s.
		$$
		%	\end{prop}	
	\subsection{LIL for linear stochastic Schr\"{o}dinger equation}
	%	In infinite-dimensional case, we take the linear stochastic Schr\"{o}dinger equation as a test equation to give an example. %Similar to Section \ref{SectionOscillator}, we will give LILs for the exact solution and numerical solution under stochastic symplectic methods or non-symplectic ones. Based on these preliminaries, we can prove the asymptotic preservation of stochastic symplectic methods.
	Consider the linear stochastic Schr\"{o}dinger equation 
	$d u(t)  =\mathbf{i} \Delta u(t) d t+\mathbf{i} \alpha d W(t), \; t>0$ with 
	$u(0)  =u_0, %\in H_0^1(0, \pi).
	$
	where $\alpha>0$, $\Delta$ denotes the Laplace operator with Dirichlet boundary conditions, and $W(t)$ is an $L^2(0,\pi;\mathbb{R})$-valued $Q$-Wiener process  on a complete filtered probability space $(\Omega,\mathcal{F},\{\mathcal{F}_t\}_{t\geq 0},\mathbb{P})$.
	%, where $Q$ is a non-negative symmetric operator on $L^2(0,\pi;\mathbb{R})$ with finite trace and $\{\mathcal{F}_t\}_{t\geq 0}$ satisfies the usual conditions. 
	%	Additionally, $H_0^1(0, \pi) :=\left\{f \in H^1(0, \pi): f(0)=f(\pi)=0\right\}$, where $ H^1(0, \pi)$ denotes the classical Sobolev space of complex-valued functions. 
	
	%Using the same notations in \cite{chen2023large}, w
	Let $\mathbb{U}=L^2(0, \pi ; \mathbb{R}), \mathbb{H}=L^2(0, \pi ; \mathbb{C})$. The corresponding complex inner product and real inner product are defined as  $\langle f, g \rangle_{\mathbb{C}} = \int^\pi_0 f(\zeta)\bar{g}(\zeta) d \zeta$ and $\langle f, g \rangle_{\mathbb{R}} =  \Re \int^\pi_0 f(\zeta)\bar{g}(\zeta) d \zeta$ for $f,g \in \mathbb{H}$, respectively. The sequence $\big\{e_k:e_k(\zeta)=\sqrt{\frac{2}{\pi}} \sin (k \zeta), \zeta\in[0,\pi]\big\}_{k\in \mathbb{N}^+}$ forms an orthonormal basis of both $\mathbb{U}$ and $\mathbb{H}$. Set $B=-\Delta, \alpha_1=0, \alpha_2=\alpha, X_0=\Re u_0, Y_0=\Im u_0$.
	\begin{comment}
		We denote $H^0 :=$ $L^2(0, \pi ; \mathbb{C}), U^0 :=L^2(0, \pi ; \mathbb{R})$. 
		For $H^0$, denote the complex inner product by  $\langle\cdot, \cdot\rangle_{\mathbb{C}}$ such that $\langle f, g \rangle_{\mathbb{C}} = \int^\pi_0 f(\zeta)\bar{g}(\zeta) d \zeta$ and the real inner product $\langle\cdot, \cdot\rangle_{\mathbb{R}}$ such that $\langle f, g \rangle_{\mathbb{R}} =  \Re \int^\pi_0 f(\zeta)\bar{g}(\zeta) d \zeta$ for $f,g \in H^0$. In this case, we let $e_k(\zeta)=\sqrt{\frac{2}{\pi}} \sin (k \zeta)$ and $S(t)=e^{\mathbf{i} t \Delta}$ be the unitary $C_0$-group generated by $\mathbf{i} \Delta$. 
	\end{comment}
	Let $u(t) =X(t)+\mathbf{i}Y(t)$, then $X(t), Y(t)$ can be written into
	% \eqref{EquOfSchrodinger} can be rewritten into 
	an SHS in the form of \eqref{GeneralSHS}, namely, 
	\begin{equation} \nonumber
		%\label{UUHamiltion}
		d\begin{pmatrix}
			X(t) \\
			Y(t)
		\end{pmatrix}=\begin{pmatrix}
			0 & -\Delta \\
			\Delta & 0
		\end{pmatrix} \begin{pmatrix}
			X(t) \\
			Y(t)
		\end{pmatrix} d t+\alpha\begin{pmatrix}
			0 \\
			1
		\end{pmatrix}d W(t),\quad \begin{pmatrix}
			X(0) \\
			Y(0)
		\end{pmatrix}=\begin{pmatrix}
			\Re u_{0} \\
			\Im u_{0}
		\end{pmatrix}.
	\end{equation} 
	%with $u(t) =X(t)+\mathbf{i}Y(t)$.
	%Assume that $Q^{\frac{1}{2}}: U^0 \rightarrow U^1$ is a Hilbert--Schmidt operator, where $U^1:=\left\{f \in H^1(0, \pi;\mathbb{R}):f(0)=f(\pi)=0\right\}$. Following \cite{cohen2018}, we know that (\ref{EquOfSchrodinger}) admits a unique mild solution in $H_0^1(0, \pi)$ such that for any $t \geq 0$,   
	\begin{comment}
		\begin{equation} \label{MildSolution}
			\begin{aligned}
				u(t) & = S(t) u_0+\mathbf{i} \alpha \int_0^t S(t-s) d W(s) \\
				%	& = S(t) u_0+\mathbf{i} \alpha \int_0^t \Big(\cos \big((t-s) \Delta \big)+ \mathbf{i} \sin \big((t-s) \Delta \big) \Big) d W(s) \\
				& =S(t) u_0-\alpha \int_0^t \sin \big((t-s) \Delta \big) d W(s)+\mathbf{i} \alpha \int_0^t \cos \big((t-s) \Delta \big) d W(s) \\
				& =: S(t) u_0-\alpha W_{\sin }(t)+\mathbf{i} \alpha W_{\cos }(t).
			\end{aligned}
		\end{equation}
	\end{comment}
	By Theorem \ref{LILContinuous}, %with $\mathbb{U}=\mathbb{R}, B(x)=x,  \alpha_1=0,\alpha_2=\alpha$, we derive the result below.
	we have the LIL of 
	%	\begin{prop}
		the exact solution $u(t)$:
		\begin{equation} \nonumber
			\limsup_{t \rightarrow \infty} \frac{\left\|u\left(t\right)\right\|_{\mathbb{R}}}{\sqrt{ t \log \log t}}  = 	\limsup_{t \rightarrow \infty} \frac{(\|X(t)\|^2_{\mathbb{R}}+\|Y(t)\|^2_{\mathbb{R}})^{\frac{1}{2}}}{\sqrt{ t \log \log t}} =\alpha \sup_{j\in \mathbb{N}^+} \sqrt{\eta_j} \quad a.s.
		\end{equation}
		According to Theorem \ref{LILfordiscrete}, the solution $u^{M,\tau}_n=X^{M,\tau}_n+\mathbf{i}Y^{M,\tau}_n$ of the stochastic symplectic method obeys the following LIL:
		$$\limsup_{n \rightarrow \infty} \frac{\|u^{M,\tau}_n\|_{\mathbb{R}}}{\sqrt{ t_n \log \log t_n}}= \sup_{\sum^M_{k=1}(\rho^2_{1,k}+\rho^2_{2,k})=1}  \sqrt{\phi^{M,\tau}(\rho)} \quad a.s.,$$
		where  $\phi^{M,\tau}(\rho)$ is given in Theorem \ref{LILfordiscrete}.
		\begin{comment}
			:=\sum^{M}_{k=1} \big(\xi_{k,1,\tau}\rho^2_{1,k}+\xi_{k,2,\tau}\rho^2_{2,k}+\xi_{k,3,\tau}\rho_{1,k}\rho_{2,k}\big)\eta_k$ with
			\begin{align*}
				\xi_{k,1,\tau} &:= \frac{b_{1}^{2}+\left(a_{11} b_{1}+a_{12} b_{2}\right)^{2}-2 b_{1}\left(a_{11} b_{1}+a_{12} b_{2}\right) \cos \left(\theta_{k}\right)}{ \sin ^{2}\left(\theta_{k}\right)}, \\
				\xi_{k,2,\tau} &:= \frac{b_{2}^{2}+\left(a_{21} b_{1}-a_{11} b_{2}\right)^{2}+2 b_{2}\left(a_{21} b_{1}-a_{11} b_{2}\right) \cos \left(\theta_{k}\right)}{ \sin ^{2}\left(\theta_{k}\right)}, \\
				\xi_{k,3,\tau} &:= \frac{-b_1b_2 \cos(2\theta_k) +[b_2(a_{11} b_{1}+a_{12} b_{2})-b_1\left(a_{21} b_{1}-a_{11} b_{2}\right)] \cos (\theta_k)}{\sin^2(\theta_k)} \\
				&\quad +\frac{(a_{11} b_{1}+a_{12} b_{2})\left(a_{21} b_{1}-a_{11} b_{2}\right)}{\sin^2(\theta_k)}.
			\end{align*}
		\end{comment}
		Further, by Theorem \ref{Asymptoticpreservation}, we derive that the stochastic symplectic method asymptotically preserves the LIL of the exact solution:
		$$
		\lim_{M \rightarrow \infty}\lim_{\tau \rightarrow 0}\limsup_{n \rightarrow \infty} \frac{\|u^{M,\tau}_n\|_{\mathbb{R}}}{\sqrt{ t_n \log \log t_n}}=\limsup_{t \rightarrow \infty} \frac{\|u(t)\|_{\mathbb{R}}}{\sqrt{ t \log \log t}}= \alpha \sup_{j\in \mathbb{N}^+} \sqrt{\eta_j} \quad a.s.
		$$

	\bibliographystyle{plain}
	\bibliography{references}

\end{document}